\definecolor{blue}{rgb}{0,0,1}
\DeclareMathOperator{\supp}{supp}
\DeclareMathOperator{\dist}{dist}
\DeclareMathOperator{\diam}{diam}
\DeclareMathOperator{\interior}{int}
\let\div\relax
\DeclareMathOperator{\div}{div}
\newcommand{\vertiii}[1]{{\left\vert\kern-0.25ex\left\vert\kern-0.25ex\left\vert #1
		\right\vert\kern-0.25ex\right\vert\kern-0.25ex\right\vert}}
\def\Xint#1{\mathchoice
{\XXint\displaystyle\textstyle{#1}}%
{\XXint\textstyle\scriptstyle{#1}}%
{\XXint\scriptstyle\scriptscriptstyle{#1}}%
{\XXint\scriptscriptstyle\scriptscriptstyle{#1}}%
\!\int}
\def\XXint#1#2#3{{\setbox0=\hbox{$#1{#2#3}{\int}$ }
\vcenter{\hbox{$#2#3$ }}\kern-.585\wd0}}
\def\barint{\Xint-}
\newcommand{\bariint}{\barint\mkern-11.5mu\barint}
\newcommand{\re}{\mathbb{R}}
\newcommand{\rest}[1]{\,\rule[-6pt]{.38pt}{12pt}_{\,#1}}
\renewcommand{\iint}{\int\mkern-13.5mu\int}
\renewcommand{\emptyset}{\mbox{\textup{\O}}}
\theoremstyle{plain}
\newtheorem{theorem}[equation]{Theorem}
\newtheorem{lemma}[equation]{Lemma}
\newtheorem{corollary}[equation]{Corollary}
\newtheorem{proposition}[equation]{Proposition}
\theoremstyle{definition}
\newtheorem{definition}[equation]{Definition}
\theoremstyle{remark}
\numberwithin{equation}{section}
\begin{document}

\title[Perturbations of elliptic operators in 1-sided chord-arc domains]{Perturbations of elliptic operators in 1-sided chord-arc domains. Part II: Non-symmetric operators and Carleson measure estimates}

\author{Juan Cavero}

\address{Juan Cavero
\\
Instituto de Ciencias Matem\'{a}ticas CSIC-UAM-UC3M-UCM
\\
Consejo Superior de Investigaciones Cient\'{\i}ficas
\\
C/ Nicol\'{a}s Cabrera, 13-15
\\
E-28049 Madrid, Spain} \email{juan.cavero@icmat.es}

\author{Steve Hofmann}

\address{Steve Hofmann
\\
Department of Mathematics
\\
University of Missouri
\\
Columbia, MO 65211, USA} \email{hofmanns@missouri.edu}

\author{Jos\'{e} Mar\'{\i}a Martell}

\address{Jos\'{e} Mar\'{\i}a Martell
\\
Instituto de Ciencias Matem\'{a}ticas CSIC-UAM-UC3M-UCM
\\
Consejo Superior de Investigaciones Cient\'{\i}ficas
\\
C/ Nicol\'{a}s Cabrera, 13-15
\\
E-28049 Madrid, Spain} \email{chema.martell@icmat.es}

\author{Tatiana Toro}

\address{Tatiana Toro
\\
University of Washington
\\
Department of Mathematics
\\
Seattle, WA 98195-4350, USA} \email{toro@uw.edu}

\thanks{The first author was partially supported by ``la Caixa''-Severo Ochoa international PhD Programme.
The first and third authors acknowledge financial
support from the Spanish Ministry of Economy and Competitiveness,
through the “Severo Ochoa” Programme for Centres of Excellence in
R\&D” (SEV-2015-0554). They also acknowledge that the research
leading to these results has received funding from the European
Research Council under the European Union's Seventh Framework
Programme (FP7/2007-2013)/ ERC agreement no. 615112 HAPDEGMT.
The second author was supported by NSF grant DMS-1664047.
The fourth author was partially supported by the Craig McKibben \& Sarah Merner Professor in Mathematics and by NSF grant DMS-1664867.}

\date{\today}

\subjclass[2010]{31B05, 35J08, 35J25, 42B99, 42B25, 42B37}

\keywords{Elliptic measure, Poisson kernel, Carleson measures, $A_\infty$ Muckenhoupt weights}

\begin{abstract}
We generalize to the setting of 1-sided chord-arc domains, that is, to domains satisfying the interior Corkscrew and Harnack Chain conditions (these are respectively scale-invariant/quantitative versions of the openness and path-connectedness) and which have an Ahlfors regular boundary, a result of Kenig-Kirchheim-Pipher-Toro, in which Carleson measure estimates for bounded solutions of the equation $Lu=-\div(A\nabla u) = 0$ with $A$ being a real (not necessarily symmetric) uniformly  elliptic matrix, imply that the corresponding elliptic measure belongs to the Muckenhoupt $A_\infty$ class with respect to
surface measure on the boundary. We present two applications of this result. In the first one we extend a perturbation result recently proved by Cavero-Hofmann-Martell presenting a simpler proof and allowing non-symmetric coefficients. Second, we prove that if an operator $L$ as above has locally Lipschitz  coefficients satisfying certain Carleson measure condition then $\omega_L\in A_\infty$ if and only if $\omega_{L^\top}\in A_\infty$. As a consequence, we can remove one of the main assumptions in the non-symmetric case of a result of Hofmann-Martell-Toro and show that if the coefficients satisfy a slightly stronger Carleson measure condition the membership of the elliptic measure associated with $L$ to the class $A_\infty$ yields that the domain is indeed a chord-arc domain.
\end{abstract}

\maketitle

\setcounter{tocdepth}{2}
\tableofcontents

\section{Introduction and Main results}

F. and M. Riesz showed in \cite{RR} that harmonic measure is absolutely continuous with respect to the surface measure for any simply connected domain in the complex plane whose boundary is rectifiable. Since then, one can find many references in the literature studying how the previous result, or its quantitative version obtained by Lavrentiev \cite{Lav}, can be extended to higher dimensions. In doing that, some kind of ``strong'' connectivity hypotheses is needed (as shown by the counter example in \cite{BJ}). Dahlberg in \cite{MR0466593} established that harmonic measure satisfies a quantitative version of absolute continuity with respect to the surface measure for every Lipschitz domain. That quantitative version says that harmonic measure is in the Muckenhoupt class of weights $A_\infty$, and more precisely it belongs to $RH_2$, the class of weights satisfying a reverse Hölder condition with exponent $2$. 

Jerison and Kenig \cite{MR676988} introduced a new class of domains called NTA (non-tangentially accessible). These domains satisfy interior and exterior Corkscrew conditions (these are quantitative versions of the fact that the domain and its exterior are open sets). They also satisfy an interior Harnack Chain condition (which is a quantitative version of the path-connectivity). In this class of domains they developed the boundary regularity theory for harmonic functions, they also established the properties of the harmonic measure, and the Green function. NTA domains whose boundary is Ahlfors regular are called of type chord-arc. In this class of domains which include Lipschitz domains David-Jerison \cite{MR1078740} and independently Semmes  \cite{Sem} proved that the harmonic measure is an $A_\infty$ weight with respect to surface measure to the boundary.  It belongs to some class $RH_p$ with $p>1$.

Recently a big effort has been made to understand in what domains and for what operators the elliptic measure is an $A_\infty$ weight with respect to surface measure to the boundary
of the domain. One context where the theory has been satisfactorily developed is that of $1$-sided chord-arc domains. These are open sets $\Omega\subset \mathbb{R}^{n+1}$, $n\ge 2$, whose boundaries $\partial\Omega$ are $n$-dimensional Ahlfors regular (cf. Definition \ref{defAR}), and which satisfy interior (but not exterior) Corkscrew and Harnack Chain conditions  see Definitions \ref{defCS} and \ref{defHC}
below). In \cite{hofmartell,  hofmartelltuero} the authors show that in the setting of 1-sided chord-arc domains, harmonic measure is in $A_\infty(\partial\Omega)$ (cf. \ref{defi:Ainfty}) if and only if $\partial\Omega$ is uniformly rectifiable (a quantitative version of rectifiability). It was shown later in \cite{MR3626548} that under the same background hypothesis, if $\partial\Omega$ is uniformly rectifiable then $\Omega$ satisfies an exterior corkscrew condition and hence $\Omega$ is a chord-arc domain. All these together and, additionally, \cite{MR3626548} in conjunction with
\cite{MR1078740} or \cite{Sem}, give a characterization of chord-arc domains, or a characterization of the uniform rectifiability of the boundary,  in terms of the membership of harmonic measure to the class $A_\infty(\partial\Omega)$. For other elliptic operators $Lu=-\div(A\nabla u)$ with variable coefficients it was shown recently in \cite{HMT-var} that the same characterization holds provided $A$ is locally Lipschitz and has appropriately controlled oscillation near the boundary.

This paper is the second part of a series of two articles where we consider perturbation of real elliptic operators in the setting of 1-sided chord-arc domains. In the first paper of the series \cite{cavhofmartell} we worked with symmetric operators and studied perturbations that preserve the $A_\infty(\partial\Omega)$ property extending the work of \cite{MR1114608, MR3107693, MPT2} (see also \cite{MR1828387}, \cite{MR2833577, MR2655385}) to the setting of 1-sided chord-arc domains. It was shown that if the disagreement between two elliptic symmetric matrices satisfies certain Carleson measure condition, then one of the associated elliptic measures is in $A_\infty(\partial\Omega)$ if and only if the other one is in $A_\infty(\partial\Omega)$. In other words, the property that the elliptic measure belongs to $A_\infty(\partial\Omega)$ is stable under Carleson  measure type perturbations. That result was proved using the so-called extrapolation of Carleson measures,  which originated in  \cite{MR1020043} (see also \cite{MR1828387, MR1879847, MR1934198}), in the form developed in \cite{MR2833577, MR2655385} (see also \cite{hofmartell}).  The method is a bootstrapping argument, based on
the Corona construction of Carleson \cite{Car} and Carleson and Garnett \cite{CG}, that, roughly speaking, allows one to reduce matters to the case in which the perturbation is small in some sawtooth subdomains.  Implicit in the proof of the perturbation result in \cite{cavhofmartell} one can find the treatment of the case in which the perturbation is small, and this allowed the authors to obtain that  for sufficiently small perturbations, not only the class $A_\infty$ is preserved but one can also keep the same exponent in the corresponding reverse Hölder class.

In the present paper we work in the same setting of 1-sided chord-arc domains and consider real not necessarily symmetric elliptic operators. Our first goal is to establish that for any real elliptic operator non-necessarily symmetric $L$, the property that all bounded solutions of $L$ satisfy Carleson measure estimates yields  $\omega_L\in A_\infty(\partial\Omega)$. This extends the work \cite{KKiPT} where they treated bounded Lipschitz domains and domains above the graph of a Lipschitz function.
That the converse is true (hence both properties are equivalent) follows from \cite{HMT-general}  where a more general estimate is obtained. Indeed, assuming that  $\omega_L\in A_\infty(\partial\Omega)$ then it is shown that the conical square function is controlled by the non-tangential maximal function in every $L^p(\partial\Omega)$ for every $1<p<\infty$ where both are applied to solutions of $L$. Applying this estimate with $p=2$ to a bounded solution 
one obtains the desired Carleson. Here, nevertheless, we present a simpler and novel argument for the latter fact. The precise result is as follows:

\begin{theorem}\label{theor:cme-implies-ainf}
	Let $\Omega\subset\re^{n+1}$ be a 1-sided $\mathrm{CAD}$ and let $Lu=-\div(A\nabla u)$ be a real (not necessarily symmetric) elliptic operator (cf. Definition \ref{ellipticoperator}). The following statements are equivalent:
	\begin{list}{$(\theenumi)$}{\usecounter{enumi}\leftmargin=1cm \labelwidth=1cm \itemsep=0.1cm \topsep=.2cm \renewcommand{\theenumi}{\alph{enumi}}}
	
\item	Every bounded weak solution of $Lu=0$ satisfies a Carleson measure estimate, that is, there exists $C$ such that every $u\in W^{1,2}_{\rm loc}(\Omega)\cap L^\infty(\Omega)$ with $Lu=0$  in $\Omega$ in the weak sense,  satisfies the Carleson measure condition
	\begin{equation}\label{cmeestimate}
	\sup_{\substack{x\in\partial\Omega \\ 0<r<\infty}}\frac{1}{r^n}\iint_{B(x,r)\cap\Omega}|\nabla u(X)|^2\delta(X)\,dX\leq C\|u\|_{L^\infty(\Omega)}^2.
	\end{equation}

\item $\omega_L\in A_\infty(\partial\Omega)$  (cf. Definition \ref{defi:Ainfty}).
	\end{list}
\end{theorem}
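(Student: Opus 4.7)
The direction $(b)\Rightarrow(a)$ is the easier half and can be handled via the square-function/non-tangential maximal function machinery of \cite{HMT-general}: under $\omega_L\in A_\infty$ one has $\|Su\|_{L^2(\partial\Omega)}\lesssim \|Nu\|_{L^2(\partial\Omega)}$ for every $L$-solution $u$, and localizing to a surface ball $\Delta$ with $u$ bounded produces \eqref{cmeestimate} after identifying $\|Su\|_{L^2(\Delta)}^2$ with $\iint_{T(\Delta)}|\nabla u|^2\delta\,dX$ up to bounded overlap of cones. A reasonable candidate for the simpler and novel proof promised in the introduction would be to work directly with the Caccioppoli inequality tested against the Green function $G_{L^\top}$ with pole at a Corkscrew point $X_\Delta$: integration by parts converts $\iint|\nabla u|^2 G_{L^\top}(X_\Delta,\cdot)\,dX$ into a boundary average of $u^2$ against $d\omega_{L^\top}^{X_\Delta}$, and $A_\infty$ together with the CFMS-type comparison $G_{L^\top}(X_\Delta,X)\approx \delta(X)\, k_L^{X_\Delta}(x)$ lets one pass from the Green-function weight to $\delta$.

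For the harder direction $(a)\Rightarrow(b)$ my plan is to adapt the strategy of \cite{KKiPT} to 1-sided CADs using the dyadic/sawtooth machinery already deployed in the first paper of this series \cite{cavhofmartell}. \textbf{Step 1.} Using \eqref{cmeestimate} applied to all bounded weak $L$-solutions in all sawtooth subdomains $\Omega_{\mathcal{F},Q_0}$ (which are themselves 1-sided CAD by \cite{hofmartell}) together with a Calder\'on-type good-$\lambda$ argument, prove $\|Su\|_{L^2(\Delta)}\lesssim \|Nu\|_{L^2(\Delta)}$ for every bounded weak solution $u$ and every surface ball $\Delta\subset\partial\Omega$. \textbf{Step 2.} Prove the converse $\|Nu\|_{L^2(\Delta)}\lesssim \|Su\|_{L^2(\Delta)}+\mathrm{tail}$ via a Caccioppoli argument inside Carleson boxes combined with the CFMS-type estimates of \cite{HMT-general}. \textbf{Step 3.} Combine Steps 1 and 2 with the elliptic-measure solution $u_f(X)=\int f\,d\omega_L^X$ for continuous $f\in C(\partial\Omega)$ to obtain $L^2$-solvability of the Dirichlet problem, i.e., $\|N u_f\|_{L^2(\sigma)}\lesssim \|f\|_{L^2(\sigma)}$. \textbf{Step 4.} Test $L^2$-solvability against $f=\mathbf{1}_F$ for Borel $F\subset\Delta$ and use $u_f(X_\Delta)=\omega_L^{X_\Delta}(F)$ to deduce a reverse H\"older inequality for the Poisson kernel of $L$, giving $\omega_L\in B_2(\sigma)\subset A_\infty(\sigma)$.

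The main obstacle is Step 1. The classical good-$\lambda$ argument relies on the maximum principle, exterior barriers, and the CFMS comparison of $\omega_L$ with $G_L$ together with interior Harnack chains; the first two are unavailable here because 1-sided CADs need not satisfy an exterior Corkscrew condition, so the entire argument must be carried out inside \emph{interior} sawtooth subdomains, using the stability of the 1-sided CAD structure under passage to sawtooths. Moreover, the non-symmetry of $L$ forces a careful bookkeeping of $L$ versus $L^\top$ in every comparison involving the Green function, since $G_L(X,Y)\neq G_L(Y,X)$ in general and one must consistently replace $G_L$ by $G_{L^\top}$ when swapping poles. A secondary but nontrivial technical point is that \eqref{cmeestimate} must be known to persist with a uniform constant for solutions defined only in sawtooth subdomains of arbitrary generation, so that the good-$\lambda$ inequality can be iterated without constant degradation.
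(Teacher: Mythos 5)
Your plan for $(a)\Rightarrow(b)$ has a genuine gap at Step~2, and it is fatal to the whole chain. The estimate $\|Nu\|_{L^2(d\sigma)}\lesssim\|Su\|_{L^2(d\sigma)}+\mathrm{tail}$ is not available for a general elliptic operator on a 1-sided CAD: the Dahlberg--Jerison--Kenig good-$\lambda$ argument produces $N\lesssim S$ in $L^p(d\omega_L)$, and transferring that estimate to $L^p(d\sigma)$ requires precisely the comparability of $\omega_L$ and $\sigma$ (i.e.\ $\omega_L\in A_\infty(\partial\Omega)$) that you are trying to prove; for operators whose elliptic measure degenerates relative to $\sigma$ there is no reason for the $L^2(d\sigma)$ version to hold. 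So the route CME $\Rightarrow$ ($S\lesssim N$ and $N\lesssim S$ in $L^2(d\sigma)$) $\Rightarrow$ $L^2(d\sigma)$-solvability $\Rightarrow B_2\Rightarrow A_\infty$ is circular at its second step (and, separately, $N\approx S$ in $L^2(d\sigma)$ alone does not yield solvability for $u_f$ without an independent a priori bound on one of the two quantities). This is exactly the difficulty that the $\varepsilon$-cover method of \cite{KKiPT} was designed to bypass, and the paper follows that method: given $F\subset Q_0$ with $\omega_L^{X_{Q_0}}(F)\le\alpha\,\omega_L^{X_{Q_0}}(Q_0)$, one extracts a good $\varepsilon_0$-cover of $F$ with respect to $\omega_L^{X_{Q_0}}$ of length $k\approx\log\alpha^{-1}$ (Lemma \ref{extract-eps-cover}), builds from it a single Borel set $S$ so that the bounded solution $u=\omega_L^{\cdot}(S)$ oscillates by a fixed amount $c_0/2$ between two nested corkscrew points at every generation of the cover above each $x\in F$ (Lemma \ref{claim2}), hence $S^{\eta}_{Q_0}u(x)\gtrsim(\log\alpha^{-1})^{1/2}$ on $F$; integrating this over $F$, applying Fubini and \eqref{cmeestimate} forces $\sigma(F)\lesssim\sigma(Q_0)/\log\alpha^{-1}$, which is the $A_\infty$ condition. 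No $N\lesssim S$ estimate and no solvability ever enter. Your sketch contains none of this, and identifies the wrong step (Step~1, which is comparatively routine) as the main obstacle.

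For $(b)\Rightarrow(a)$ your first suggestion (quote the $S\lesssim N$ estimate of \cite{HMT-general} and localize) is legitimate and is exactly the alternative the introduction acknowledges. Your guess at the self-contained argument is in the right spirit (integrate $|\nabla u|^2$ against a Green function and integrate by parts) but misses the key device. First, a bookkeeping point: to kill the term $\iint A\nabla(u^2\Psi^2)\cdot\nabla\mathcal{G}\,dX$ one needs $\mathcal{G}$ to be an $L^{\top}$-solution away from its pole, i.e.\ $\mathcal{G}=G_L(X_0,\cdot)$ (the \emph{second} variable of $G_L$), not $G_{L^{\top}}(X_\Delta,\cdot)$. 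Second, and more seriously, passing from the Green-function weight back to $\delta$ via $G_{L}(X_0,X)\approx\delta(X)\,k_L^{X_0}(x)$ requires a pointwise \emph{lower} bound on the Poisson kernel over the whole Carleson box, which $A_\infty$ does not provide ($k_L^{X_0}$ may vanish on sets of small measure). The paper circumvents this by first extracting, via \cite[Lemma 3.5]{HMT-var}, an ample-contact sawtooth $\Omega_{\mathcal{F}_{Q_0},Q_0}$ on which $\omega_0(Q)/\sigma(Q)\approx1$ for every $Q$ in the discretized sawtooth, so that $\mathcal{G}_0\approx\delta$ there in both directions; the integration by parts is carried out only on truncations of that sawtooth with the adapted cut-off $\Psi_N$, and the resulting sawtooth Carleson bounds are upgraded to the full discrete Carleson estimate by the John--Nirenberg-type lemma \cite[Lemma 3.12]{HMT-var}. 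Without this sawtooth reduction your argument does not close.
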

\bigskip

Our second goal is to use the previous characterization to extend the ``large'' constant  perturbation result from \cite{cavhofmartell} to the non-symmetric case:

\begin{theorem}\label{theor:perturbation-ainf-cme}
	Let $\Omega\subset\re^{n+1}$, $n\ge 2$, be a 1-sided $\mathrm{CAD}$ (cf. Definition \ref{def-1CAD}). Let $L_1u=-\div(A_1\nabla u)$ and $L_0u=-\div(A_0\nabla u)$ be real (not necessarily symmetric) elliptic operators (cf. Definition \ref{ellipticoperator}). Define the disagreement between $A_1$ and $A_0$ in $\Omega$ by
	\begin{equation}\label{discrepancia}
	\varrho(A_1, A_0)(X):=\sup_{Y\in B(X,\delta(X)/2)}|A_1(Y)-A_0(Y)|,\qquad X\in\Omega,
	\end{equation}
	where $\delta(X):=\dist(X,\partial\Omega)$, and assume that it satisfies the Carleson measure condition
	\begin{equation}\label{eq:defi-vertiii}
	\sup_{\substack{x\in\partial\Omega \\ 0<r<\diam(\partial\Omega)}}\frac{1}{\sigma(B(x,r)\cap\partial\Omega)}\iint_{B(x,r)\cap\Omega}\frac{\varrho(A_1, A_0)(X)^2}{\delta(X)}\,dX<\infty.
	\end{equation}	
	Then,  $\omega_{L_0}\in A_\infty(\partial\Omega)$ if and only if $\omega_{L_1}\in A_\infty(\partial\Omega)$ (cf. Definition \ref{defi:Ainfty}).  	 	
\end{theorem}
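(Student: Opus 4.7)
The plan is to pass through Theorem \ref{theor:cme-implies-ainf} to recast the $A_\infty$ statement as a Carleson measure estimate (CME) statement, and then run the Carleson-measure perturbation machinery from \cite{cavhofmartell}, now adapted to non-symmetric operators.

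First I would apply Theorem \ref{theor:cme-implies-ainf} to both $L_0$ and $L_1$ to replace each $\omega_{L_i}\in A_\infty(\partial\Omega)$ by the equivalent statement that every bounded weak solution of $L_iu=0$ satisfies the CME \eqref{cmeestimate}. Since the hypothesis \eqref{eq:defi-vertiii} is symmetric in $L_0,L_1$, it suffices to prove: if every bounded weak solution of $L_0u=0$ satisfies \eqref{cmeestimate} and \eqref{eq:defi-vertiii} holds, then every bounded weak solution of $L_1u=0$ also satisfies \eqref{cmeestimate}. I would then fix such a $u$ with $\|u\|_{L^\infty(\Omega)}\le 1$ and a surface ball $\Delta(x_0,r_0)$, and work to bound $r_0^{-n}\iint_{B(x_0,r_0)\cap\Omega}|\nabla u|^2\delta\,dX$.

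Next I would set up the extrapolation of Carleson measures with respect to $d\mu(X):=\varrho(A_1,A_0)(X)^2\delta(X)^{-1}\,dX$, which is Carleson by \eqref{eq:defi-vertiii}. For a small $\tau>0$, a stopping-time Corona decomposition of the dyadic cubes on $\partial\Omega$ would produce sawtooth subdomains $\Omega_{\mathcal{F},Q}$ on which the restricted Carleson norm of $\mu$ is at most $\tau$ (each itself a 1-sided CAD by \cite{hofmartell}), together with a packing bound on the stopping cubes proportional to $\|\mu\|_{\mathcal C}/\tau$; the $A_\infty$-transfer results from \cite{hofmartell} then give $\omega_{L_0}^{\Omega_{\mathcal{F},Q}}\in A_\infty(\partial\Omega_{\mathcal{F},Q})$, and Theorem \ref{theor:cme-implies-ainf} applied in each sawtooth yields the CME for $L_0$-solutions there. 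Inside a single sawtooth $\Omega^*=\Omega_{\mathcal{F},Q}$ I would carry out the small-disagreement sub-estimate: using $L_1u=0$ to rewrite $L_0u=-\mathrm{div}((A_0-A_1)\nabla u)$, I would compare $u$ to an $L_0$-solution $v$ on $\Omega^*$ sharing its boundary trace; the CME for $L_0$ in $\Omega^*$ handles $v$, while the energy estimate $\iint_{\Omega^*}|\nabla (u-v)|^2\lesssim\iint_{\Omega^*}\varrho(A_1,A_0)^2|\nabla u|^2$ combined with $\delta\lesssim r_0$ and the Carleson smallness of $\mu|_{\Omega^*}$ lets the remainder be absorbed into the target CME for $u$ once $\tau$ is small enough. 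Summing across the sawtooth family using the packing condition would close the extrapolation.

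The main obstacle is the non-symmetry of $A_0,A_1$. In the symmetric setting of \cite{cavhofmartell}, the small-disagreement energy identity pairs $L_0$ cleanly with itself through the symmetry of $A_0$; for general matrices the analogous identities produce first-order terms involving $A_0-A_0^\top$ and cross-terms against the Lipschitz weight $\nabla\delta$ that do not symmetrize. One must either reorganize the energy estimate so that only the pointwise ellipticity of $(A_0+A_0^\top)/2$ is invoked (so $L_0^\top$ never enters) or run the extrapolation in parallel for $L_0$ and $L_0^\top$. That Theorem \ref{theor:cme-implies-ainf} itself requires no symmetry hypothesis is exactly what makes this scheme viable.
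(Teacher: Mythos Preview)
Your plan follows the extrapolation-of-Carleson-measures route of \cite{cavhofmartell}, but the paper deliberately avoids that machinery here. After the reduction via Theorem~\ref{theor:cme-implies-ainf} (which you also use), the paper exploits $\omega_{L_0}\in A_\infty$ directly: for each $Q_0$ it uses a stopping-time on the averages of the kernel $k_{L_0}$ to extract a \emph{single} sawtooth $\Omega_{\mathcal F_{Q_0},Q_0}$ with ample contact on which $\omega_0(Q)/\sigma(Q)\approx 1$ for all $Q\in\mathbb D_{\mathcal F_{Q_0},Q_0}$, so that the normalized Green function $\mathcal G_0=c\,G_{L_0}(X_0,\cdot)$ satisfies $\mathcal G_0\approx\delta$ there. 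An integration by parts against $\mathcal G_0$ and a cut-off $\Psi_N$ (Proposition~\ref{pdelemma:II}) then handles everything in one stroke: the Leibniz identity uses $L_1 u=0$ and $L_0^\top\mathcal G_0=0$, so the only ``non-symmetric'' ingredient needed is that the Green function of a non-symmetric operator solves the transpose equation away from the pole---no reorganization of energy identities and no parallel run for $L_0^\top$ is required. The discrepancy term $\iint (A_0-A_1)\nabla(u^2)\cdot\nabla(\mathcal G_0\Psi_N^2)$ is controlled by Cauchy--Schwarz, Caccioppoli for $\mathcal G_0$, and the Carleson bound on $\varrho^2/\delta$, with no smallness needed.

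There is also a genuine gap in your small-disagreement step as written. From the unweighted energy estimate $\iint_{\Omega^*}|\nabla(u-v)|^2\lesssim\iint_{\Omega^*}\varrho^2|\nabla u|^2$ and $\delta\lesssim r_0$ you obtain $\iint_{\Omega^*}|\nabla(u-v)|^2\,\delta\lesssim r_0\iint_{\Omega^*}\varrho^2|\nabla u|^2$, but Carleson smallness of $\varrho^2/\delta$ does not make the right side a small multiple of $\iint_{\Omega^*}|\nabla u|^2\,\delta$: that would require a pointwise bound $r_0\,\varrho^2\le\tau\,\delta$, which the Carleson condition does not provide. If instead you invoke the interior gradient bound $|\nabla u|\lesssim\delta^{-1}$, you land on $\iint\varrho^2\delta^{-2}$, one power of $\delta$ worse than what you control. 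A workable version of your scheme would replace the $u$--$v$ comparison by an integration by parts against the Green function of $L_0$ inside each sawtooth---at which point the Corona decomposition becomes superfluous and you recover the paper's argument. Separately, the CME you inherit for $v$ in $\Omega^*$ is stated with weight $\delta_{\Omega^*}$, not $\delta_\Omega$; since $\delta_{\Omega^*}\le\delta_\Omega$, the inequality runs the wrong way for what you need.
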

\medskip

To prove this result we use a novel approach which is  interesting on its own right and is conceptually simpler. The bottom line is that assuming that $\omega_{L_0}\in A_\infty(\partial\Omega)$ and based on Theorem \ref{theor:cme-implies-ainf} we just need to establish that all bounded solutions for $L_1$ satisfy the aforementioned Carleson measure estimates, rather than trying to establish the ``more delicate'' condition  $\omega_{L_1}\in A_\infty(\partial\Omega)$. In doing this we exploit the fact that $\omega_{L_0}\in A_\infty(\Omega)$ to  find a sawtooth domain whose boundary has with ample contact with $\partial\Omega$, where the averages of $\omega_{L_0}$ are essentially constant. Hence
in \eqref{cmeestimate} one can replace $\delta$ by $G_{L_0}$ in a sawtooth with ample contact. This in turn allows us to perform some integrations by parts to conclude the desired estimate. We would like to emphasize that this approach cannot be used to get the ``small'' constant perturbation since
that requires to directly show that the two elliptic measures are in the same reverse Hölder class without passing through the Carleson measure estimates.

\medskip

Our last main result establishes a connection between the elliptic measures of an operator and its adjoint
assuming that the derivative of the antisymmetric part of the matrix defining the operator satisfies
some Carleson measure condition:

\begin{theorem}\label{theor:perturbation-ainf-cme-A-At}
Let $\Omega\subset\re^{n+1}$, $n\ge 2$, be a 1-sided $\mathrm{CAD}$ (cf. Definition \ref{def-1CAD}). Let $Lu=-\div(A\nabla u)$ be  a real (not necessarily symmetric) elliptic operator (cf. Definition \ref{ellipticoperator}), let $L^\top$ denote the transpose of $L$ (i.e, $L^\top u=-\div(A^\top\nabla u)$ with $A^\top$ being the transpose matrix of $A$), and let $L^{\rm sym}=\frac{L+L^\top}{2}$ be the symmetric part of $L$. Assume that $(A-A^\top)\in {\rm Lip}_{\rm loc}(\Omega)$ and let
\begin{equation}\label{def-B0}
\div_C (A-A^\top)(X)
=
\bigg(\sum_{i=1}^{n+1}\partial_{i}(a_{i,j}-a_{j,i})(X)\bigg)_{1\leq j\leq n+1},\qquad X\in\Omega.
\end{equation}
Assume that  the following Carleson measure estimate holds
\begin{equation}\label{carleson-B0}
\sup_{\substack{x\in\partial\Omega \\ 0<r<\diam(\partial\Omega)}} \frac{1}{\sigma(B(x,r)\cap\partial\Omega)}
\iint_{B(x,r)\cap\Omega}\big|\div_C (A-A^\top)(X)\big|^2\delta(X)\,dX<\infty.
\end{equation}
Then $\omega_{L}\in A_\infty(\partial\Omega)$ if and only if $\omega_{L^\top}\in A_\infty(\partial\Omega)$ if and only if $\omega_{L^{\rm sym}}\in A_\infty(\partial\Omega)$ (cf. Definition \ref{defi:Ainfty}). 
\end{theorem}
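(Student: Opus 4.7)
The plan is to reduce the theorem, via Theorem~\ref{theor:cme-implies-ainf}, to transferring the Carleson measure estimate \eqref{cmeestimate} between bounded weak solutions of $L$ and of $L^{\rm sym}$. Since $L^{\rm sym}$ is self-adjoint, $\omega_{L^{\rm sym}}=\omega_{(L^{\rm sym})^\top}$, so the equivalence involving $L^{\rm sym}$ is automatic. Moreover, applying the theorem to $A^\top$ in place of $A$ reproduces the statement for $L^\top$, since $(A^\top)^{\rm sym}=A^{\rm sym}$ and the Carleson norm of $\div_C(A^\top-(A^\top)^\top)=-\div_C(A-A^\top)$ coincides with that in \eqref{carleson-B0}. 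Hence it suffices to prove $\omega_L\in A_\infty(\partial\Omega)\iff\omega_{L^{\rm sym}}\in A_\infty(\partial\Omega)$, which by Theorem~\ref{theor:cme-implies-ainf} reduces to transferring \eqref{cmeestimate} between the two solution classes.

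The bridge is the distributional identity
\begin{equation*}
L^{\rm sym} u \;=\; L u \;+\; \tfrac{1}{2}\,\div_C(A-A^\top)\cdot\nabla u \qquad \text{in }\Omega,
\end{equation*}
valid for every $u\in W^{1,2}_{\rm loc}(\Omega)$. I would derive it by splitting $A=A^{\rm sym}+A^{\rm asym}$ with $A^{\rm asym}=(A-A^\top)/2$, integrating by parts in the weak formulation using $(A-A^\top)\in {\rm Lip}_{\rm loc}(\Omega)$, and applying the cancellation $\sum_{i,j}(a_{ij}-a_{ji})\partial_i\partial_j\psi=0$ for smooth test functions (Schwarz's theorem) to conclude that $\div(A^{\rm asym}\nabla u)=\tfrac12\div_C(A-A^\top)\cdot\nabla u$. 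Consequently, a bounded $L$-solution $u$ satisfies $L^{\rm sym} u=\tfrac12\div_C(A-A^\top)\cdot\nabla u$, and a bounded $L^{\rm sym}$-solution $v$ satisfies $Lv=-\tfrac12\div_C(A-A^\top)\cdot\nabla v$: the two equations differ only by a first-order source whose coefficient is precisely the vector field controlled by \eqref{carleson-B0}.

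For the direction $\omega_{L^{\rm sym}}\in A_\infty(\partial\Omega)\Rightarrow\omega_L\in A_\infty(\partial\Omega)$, let $u$ be a bounded weak $L$-solution and fix $\Delta=B(x,r)\cap\partial\Omega$. Following the sawtooth scheme outlined for Theorem~\ref{theor:perturbation-ainf-cme}, use $\omega_{L^{\rm sym}}\in A_\infty(\partial\Omega)$ to construct a sawtooth subdomain $\Omega^*\subset\Omega\cap B(x,r)$ whose boundary has ample contact with $\Delta$ and on which $G(X):=G_{L^{\rm sym}}(X,X_0)\approx c\,\delta(X)$ for a suitable far Corkscrew pole $X_0$. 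Combining the identity with $L^{\rm sym}(u^2)=2uL^{\rm sym}u-2A^{\rm sym}\nabla u\cdot\nabla u$ and $A\nabla u\cdot\nabla u=A^{\rm sym}\nabla u\cdot\nabla u$ yields the pointwise relation
\begin{equation*}
A^{\rm sym}\nabla u\cdot\nabla u \;=\; \tfrac12\,u\,\div_C(A-A^\top)\cdot\nabla u \;-\; \tfrac12\,L^{\rm sym}(u^2).
\end{equation*}
Integrating against $G\varphi^2$, with $\varphi$ a cutoff adapted to $\Omega^*$ and vanishing near $X_0$, the $L^{\rm sym}(u^2)$ term is integrated by parts and collapses, via $L^{\rm sym} G=0$ away from the pole, into standard cutoff contributions bounded by $\|u\|_{L^\infty}^2\sigma(\Delta)$ through Caccioppoli and non-tangential maximal function estimates. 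The remaining first-order error
\begin{equation*}
\mathcal{E}\;=\;\tfrac12\iint_{\Omega^*}u\,\div_C(A-A^\top)\cdot\nabla u\cdot G\varphi^2\,dX
\end{equation*}
is controlled by splitting $|\div_C(A-A^\top)|\sqrt{\delta}$ against $|\nabla u|\,G/\sqrt{\delta}$ in Cauchy--Schwarz,
\begin{equation*}
|\mathcal{E}|\;\lesssim\;\|u\|_{L^\infty}\bigg(\iint_{B(x,r)\cap\Omega}|\div_C(A-A^\top)|^2\delta\,dX\bigg)^{\!1/2}\bigg(\iint_{\Omega^*}|\nabla u|^2\frac{G^2}{\delta}\varphi^2\,dX\bigg)^{\!1/2}.
\end{equation*}
The first factor is bounded by \eqref{carleson-B0} and yields $\sigma(\Delta)^{1/2}$; using $G\approx\delta$ on $\Omega^*$, the second is comparable to $I^{1/2}$ with $I:=\iint_{\Omega^*}|\nabla u|^2 G\varphi^2$. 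An AM--GM absorption then gives $I\lesssim\|u\|_{L^\infty}^2\sigma(\Delta)$, and a standard covering based on the ample contact upgrades this to \eqref{cmeestimate} on $B(x,r)\cap\Omega$.

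The main obstacle is the construction of the sawtooth on which $G_{L^{\rm sym}}\approx\delta$, together with the careful bookkeeping of the cutoff terms produced by $\nabla\varphi$ and the Green function pole; these rest on the $A_\infty$ machinery developed in \cite{cavhofmartell} and related work, combined with Caccioppoli-type estimates for $G$. The reverse direction ($\omega_L\in A_\infty(\partial\Omega)\Rightarrow\omega_{L^{\rm sym}}\in A_\infty(\partial\Omega)$) follows the same blueprint, now applied to a bounded $L^{\rm sym}$-solution $v$ (which satisfies $Lv=-\tfrac12\div_C(A-A^\top)\cdot\nabla v$) on a sawtooth extracted from $\omega_L\in A_\infty(\partial\Omega)$ using $G_L(\cdot,X_0)$; the cancellations now rely on $L G_L=0$ instead of $L^{\rm sym}G_{L^{\rm sym}}=0$, producing an additional first-order piece of the form $\iint v^2\varphi^2\div_C(A-A^\top)\cdot\nabla G$ that is absorbed by the same Cauchy--Schwarz with \eqref{carleson-B0} plus the estimates $|\nabla G|\lesssim 1$ and $G\approx\delta$ valid on the sawtooth, so the chain of equivalences closes.
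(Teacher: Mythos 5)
Your strategy is essentially the paper's: reduce everything via Theorem \ref{theor:cme-implies-ainf} to proving the Carleson measure estimate \eqref{cmeestimate} for bounded solutions of the ``other'' operator, work on a sawtooth where the Green function of the operator whose measure is assumed in $A_\infty$ satisfies $\mathcal{G}_0\approx\delta$, integrate by parts with an adapted cutoff, and control the antisymmetric discrepancy by converting $D\nabla(u^2)\cdot\nabla(\mathcal{G}_0\Psi^2)$ into $\div_C D\cdot\nabla(u^2)\,\mathcal{G}_0\Psi^2$ and applying Cauchy--Schwarz against \eqref{carleson-B0} followed by absorption. Your ``bridge identity'' $L^{\rm sym}u=Lu+\tfrac12\div_C(A-A^\top)\cdot\nabla u$ is exactly the content of the paper's Lemma \ref{int-parts}. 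The paper packages the argument as the more general Theorem \ref{theor:perturbation-general} (allowing $A_0-A_1=A+D$ with a size-Carleson part $A$ and an antisymmetric part $D$) and obtains the present theorem by two choices of $D$; your reduction of the three-way equivalence (self-adjointness of $L^{\rm sym}$, applying the statement to $A^\top$) is a legitimate alternative to that.

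One point needs repair. In your reverse direction you use the pole function $G_L(\cdot,X_0)$, and because the cancellation there comes from $LG_L=0$ rather than $L^{\rm sym}G=0$, you generate the extra term $\iint v^2\varphi^2\,\div_C(A-A^\top)\cdot\nabla G\,dX$, which you propose to bound using a pointwise estimate $|\nabla G|\lesssim 1$ on the sawtooth. That pointwise bound is not available for operators with merely bounded measurable coefficients; all one has is the Caccioppoli-type average $\bariint_{I^{**}}|\nabla G|^2\,dX\lesssim 1$ on Whitney boxes of the sawtooth (from $G\approx\delta$ there). That averaged bound does suffice: summing Cauchy--Schwarz over the Whitney boxes against $\big(\iint_{I^{**}}|\div_C(A-A^\top)|^2\delta\,dX\big)^{1/2}\ell(I)^{n/2}$ and using \eqref{carleson-B0} plus $\sum_I\ell(I)^n\lesssim\sigma(Q_0)$ closes the estimate, so the gap is fixable. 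Note also that the paper's symmetric formulation avoids this term entirely: since the hypotheses on $A_0-A_1$ are invariant under swapping $A_0$ and $A_1$, only one direction is ever proved, the derivative produced by Lemma \ref{int-parts} always falls on $u^2$, and $\div_C D\cdot\nabla\mathcal{G}_0$ never appears.
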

\medskip

As an immediate consequence of the previous result we obtain the following:

\begin{corollary}\label{cor-tt-1}
Let $\Omega\subset\re^{n+1}$, $n\ge 2$, be a 1-sided $\mathrm{CAD}$ (cf. Definition \ref{def-1CAD}). Let $Lu=-\div(A\nabla u)$ be  a real (not necessarily symmetric) elliptic operator (cf. Definition \ref{ellipticoperator}). Assume that $A\in {\rm Lip}_{\rm loc}(\Omega)$, $|\nabla A|\,\delta\in L^{\infty}(\Omega)$ and the following Carleson measure estimate
\begin{equation}
\sup_{\substack{x\in\partial\Omega \\ 0<r<\diam(\partial\Omega)}} \frac{1}{\sigma(B(x,r)\cap\partial\Omega)}
\iint_{B(x,r)\cap\Omega}\big|\nabla A(X)\big|^2\delta(X)\,dX<\infty.
\end{equation}
Then $\omega_{L}\in A_\infty(\partial\Omega)$ if and only if $\omega_{L^\top}\in A_\infty(\partial\Omega)$.

In particular, if one further assumes that
\begin{equation}
\sup_{\substack{x\in\partial\Omega \\ 0<r<\diam(\partial\Omega)}} \frac{1}{\sigma(B(x,r)\cap\partial\Omega)}
\iint_{B(x,r)\cap\Omega}\big|\nabla A(X)\big|\,dX<\infty,
\end{equation}
then
\begin{equation}
\omega_L\in A_\infty(\partial\Omega)
\qquad\Longrightarrow\qquad
\Omega\mbox{ is a $\mathrm{CAD}$ (cf. Definition \ref{def-1CAD})}.
\end{equation}
\end{corollary}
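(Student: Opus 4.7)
The first part of the corollary is an immediate consequence of Theorem \ref{theor:perturbation-ainf-cme-A-At}: the plan is merely to verify that the Carleson measure hypothesis \eqref{carleson-B0} on $\div_C(A-A^\top)$ is subsumed by the stronger hypothesis assumed here on $|\nabla A|^2\delta\,dX$. This follows from the pointwise bound
\[
\bigl|\div_C(A-A^\top)(X)\bigr|\,\le\, C_n\,|\nabla A(X)|,\qquad X\in\Omega,
\]
which is immediate from the triangle inequality and Cauchy--Schwarz applied to the definition \eqref{def-B0}, with $C_n$ depending only on the dimension. Invoking Theorem \ref{theor:perturbation-ainf-cme-A-At} then yields not only $\omega_L\in A_\infty(\partial\Omega)\iff\omega_{L^\top}\in A_\infty(\partial\Omega)$ but also the extra equivalence $\omega_L\in A_\infty(\partial\Omega)\iff\omega_{L^{\rm sym}}\in A_\infty(\partial\Omega)$, which will be needed in the second part.

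For the second assertion I would reduce to the symmetric case and then quote the Hofmann--Martell--Toro characterization of chord-arc domains from \cite{HMT-var} mentioned in the introduction. Set $A^{\rm sym}=(A+A^\top)/2$, so that $L^{\rm sym}u=-\div(A^{\rm sym}\nabla u)$ is a real symmetric elliptic operator. Since transposition preserves the Frobenius norm, the triangle inequality gives $|\nabla A^{\rm sym}|\le |\nabla A|$ pointwise, so $A^{\rm sym}$ is locally Lipschitz, satisfies $|\nabla A^{\rm sym}|\,\delta\in L^\infty(\Omega)$, and inherits both the $L^2$-weighted Carleson bound on $|\nabla A|^2\delta\,dX$ and the stronger $L^1$-Carleson bound on $|\nabla A|\,dX$. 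By the first part we have $\omega_L\in A_\infty(\partial\Omega)\Rightarrow\omega_{L^{\rm sym}}\in A_\infty(\partial\Omega)$, and the symmetric case of \cite{HMT-var} applied to $L^{\rm sym}$ then produces $\Omega\in\mathrm{CAD}$.

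The proof is essentially mechanical once Theorem \ref{theor:perturbation-ainf-cme-A-At} is in hand. The only genuinely conceptual step is recognizing that this theorem is precisely what is needed to remove the assumption on $\omega_{L^\top}$ (equivalently, on $\omega_{L^{\rm sym}}$) that would otherwise appear in the non-symmetric version of the \cite{HMT-var} characterization: rather than having to impose $A_\infty$ simultaneously for both $L$ and its adjoint, the Carleson condition on $|\nabla A|^2\delta$ upgrades the single assumption $\omega_L\in A_\infty(\partial\Omega)$ to the symmetric version free of charge. Everything else in the argument amounts to routine pointwise dominations and straightforward inheritance of hypotheses from $A$ to $A^{\rm sym}$.
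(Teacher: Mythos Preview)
Your proof is correct and follows essentially the same approach as the paper for the first part: both simply observe that the pointwise bound $|\div_C(A-A^\top)|\lesssim |\nabla A|$ reduces the hypothesis of Theorem~\ref{theor:perturbation-ainf-cme-A-At} to the Carleson condition on $|\nabla A|^2\delta$.

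For the second part there is a minor but genuine difference worth noting. The paper stays with the original operator $L$: having obtained $\omega_{L^\top}\in A_\infty(\partial\Omega)$ from the first part, it invokes \cite[Theorem~1.5]{HMT-var} directly in its non-symmetric form, which requires $\omega_L$ and $\omega_{L^\top}$ simultaneously in $A_\infty(\partial\Omega)$. You instead pass to the symmetric part $L^{\rm sym}$, use the equivalence $\omega_L\in A_\infty\iff\omega_{L^{\rm sym}}\in A_\infty$ (also furnished by Theorem~\ref{theor:perturbation-ainf-cme-A-At}), verify that $A^{\rm sym}$ inherits all the regularity and Carleson hypotheses from $A$, and then apply only the symmetric case of \cite{HMT-var}. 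Both routes are valid and of comparable length; your detour through $L^{\rm sym}$ has the small advantage that it appeals only to the symmetric version of the free-boundary result, while the paper's route is slightly more direct in that it avoids checking that the hypotheses transfer to $A^{\rm sym}$.
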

\medskip

The first part of Corollary \ref{cor-tt-1} follows from Theorem \ref{theor:perturbation-ainf-cme-A-At}. For the second part, we notice that
once  $\omega_L\in A_\infty(\partial\Omega)$ implies, after using the first part, that  $\omega_{L^\top}\in A_\infty(\partial\Omega)$. In turn, we can then invoke \cite[Theorem 1.5] {HMT-var} to conclude that $\Omega$ is a CAD. Note that comparing this with \cite[Theorem 1.5] {HMT-var} what we are proving is that with the given background hypotheses one just needs to assume $\omega_L\in A_\infty(\partial\Omega)$, and the assumption $\omega_{L^\top}\in A_\infty(\partial\Omega)$ is redundant.

\medskip

The organization of the paper is as follows. In Section \ref{section:Prelim} we present some of the needed preliminaries, notations, definitions and some of the PDE estimates which will be needed throughout the paper. Section \ref{section:Proof-CME} contains the proof of Theorem \ref{theor:cme-implies-ainf}. Theorems \ref{theor:perturbation-ainf-cme} and \ref{theor:perturbation-ainf-cme-A-At} are proved in Section \ref{section:Proof-Perturb}, as a matter of facts both results are  particular cases of the much more general Theorem \ref{theor:perturbation-general}.

\section{Preliminaries}\label{section:Prelim}
\subsection{Notation and conventions}
\begin{list}{$\bullet$}{\leftmargin=0.4cm  \itemsep=0.2cm}

\item  Our ambient space is $\re^{n+1}$, $n\ge 2$.

\item We use the letters $c$, $C$ to denote harmless positive constants, not necessarily the same at each occurrence, which depend only on dimension and the constants appearing in the hypotheses of the theorems (which we refer to as the ``allowable parameters''). We shall also sometimes write $a\lesssim b$ and $a\approx b$ to mean, respectively, that $a\leq C b$ and $0<c\leq a/b\leq C$, where the constants $c$ and $C$ are as above, unless explicitly noted to the contrary. Moreover, if $c$ and $C$ depend on some given parameter $\eta$, which is somehow relevant, we write $a\lesssim_\eta b$ and $a\approx_\eta b$. At times, we shall designate by $M$ a particular constant whose value will remain unchanged throughout the proof of a given lemma or proposition, but which may have a different value during the proof of a different lemma or proposition.

\item Given a domain (i.e., open and connected) $\Omega\subset\re^{n+1}$, we shall use lower case letters $x,y,z$, etc., to denote points on $\partial\Omega$, and capital letters $X,Y,Z$, etc., to denote generic points in $\re^{n+1}$ (especially those in $\Omega$).


\item The open $(n+1)$-dimensional Euclidean ball of radius $r$ will be denoted $B(x,r)$ when the center $x$ lies on $\partial\Omega$, or $B(X,r)$ when the center $X\in\re^{n+1}\setminus \partial\Omega$. A ``surface ball'' is denoted $\Delta(x,r):=B(x,r)\cap \partial\Omega$, and unless otherwise specified it is implicitly assumed that $x\in\partial\Omega$. Also if $\partial\Omega$ is bounded, we typically assume that $0<r\lesssim\diam(\partial\Omega)$, so that $\Delta=\partial\Omega$ if $\diam(\partial\Omega)<r\lesssim\diam(\partial\Omega)$.

\item Given a Euclidean ball $B$ or surface ball $\Delta$, its radius will be denoted $r(B)$ or $r(\Delta)$
respectively.

\item Given a Euclidean ball $B=B(X,r)$ or surface ball $\Delta=\Delta(x,r)$, its concentric dilate by a factor of $\kappa>0$ will be denoted by $\kappa B=B(X,\kappa r)$ or $\kappa\Delta=\Delta(x,\kappa r)$.

\item For $X\in\re^{n+1}$, we set $\delta_{\partial\Omega}(X):=\dist(X,\partial\Omega)$. Sometimes, when clear from the context we will omit the subscript $\partial\Omega$ and simply write $\delta(X)$.

\item We let $H^n$ denote the $n$-dimensional Hausdorff measure, and let $\sigma_{\partial\Omega}:=H^n\rest{\partial\Omega}$ denote the ``surface measure'' on $\partial\Omega$. For a closed set $E\subset\re^{n+1}$ we will use the notation $\sigma_{E}:=H^n\rest{E}$. When clear from the context we will also omit the subscript and simply write $\sigma$.

\item For a Borel set $A\subset\re^{n+1}$, we let $\mathbf{1}_A$ denote the usual indicator function of $A$, i.e., $\mathbf{1}_A(x)=1$ if $x\in A$, and $\mathbf{1}_A(x)=0$ if $x\notin A$.

\item For a Borel set $A\subset\re^{n+1}$, we let $\interior(A)$ denote the interior of $A$, and $\overline{A}$ denote the closure of $A$. If $A\subset \partial\Omega$, $\interior(A)$ will denote the relative interior, i.e., the largest relatively open set in $\partial\Omega$ contained in $A$. Thus, for $A\subset \partial\Omega$, the boundary is then well defined by $\partial A:=\overline{A}\setminus\interior(A)$.
\item For a Borel set $A\subset\re^{n+1}$, we denote by $C(A)$ the space of continuous functions on $A$ and by $C_c(A)$ the subspace of $C(A)$ with compact support in $A$. Note that if $A$ is compact then $C(A)\equiv C_c(A)$.

\item For a Borel set $A\subset \partial\Omega$ with $0<\sigma(A)<\infty$, we write $\barint_{A}f\,d\sigma:=\sigma(A)^{-1}\int_A f\,d\sigma$.

\item We shall use the letter $I$ (and sometimes $J$) to denote a closed $(n+1)$-dimensional Euclidean cube with sides parallel to the co-ordinate axes, and we let $\ell(I)$ denote the side length of $I$. We use $Q$ to denote a dyadic ``cube'' on $E\subset\re^{n+1}$. The latter exists, given that $E$ is $\mathrm{AR}$ (cf. \cite{davidsemmes1}, \cite{christ}), and enjoy certain properties which we enumerate in Lemma \ref{dyadiccubes} below.
\end{list}
\medskip

\subsection{Some definitions}
\begin{definition}[\textbf{Corkscrew condition}]\label{defCS}
Following \cite{MR676988}, we say that an open set $\Omega\subset\re^{n+1}$ satisfies the ``Corkscrew condition'' if for some uniform constant $c\in(0,1)$ and for every surface ball $\Delta:=\Delta(x,r)=B(x,r)\cap\partial\Omega$ with $x\in\partial\Omega$ and $0<r<\diam(\partial\Omega)$, there is a ball $B(X_{\Delta},cr)\subset B(x,r)\cap\Omega$. The point $X_\Delta\in\Omega$ is called a ``corkscrew point'' relative to $\Delta$. Note that we may allow $r<C\diam(\partial\Omega)$ for any fixed $C$, simply by adjusting the constant $c$.
\end{definition}

\begin{definition}[\textbf{Harnack Chain condition}]\label{defHC}
Again following \cite{MR676988}, we say that $\Omega\subset\re^{n+1}$ satisfies the Harnack Chain condition if there is a uniform constant $C$ such that for every $\rho>0$, $\Theta\geq 1$, and every pair of points $X,X'\in\Omega$ with $\delta(X),\delta(X')\geq\rho$ and $|X-X'|<\Theta\rho$, there is a chain of open balls $B_1,\dots,B_N\subset\Omega$, $N\leq C(\Theta)$, with $X\in B_1$, $X'\in B_N$, $B_k\cap B_{k+1}\neq\emptyset$ and $C^{-1}\diam(B_k)\leq\dist(B_k,\partial\Omega)\leq C\diam(B_k)$. The chain of balls is called a ``Harnack Chain''.                                                                               \end{definition}

\begin{definition}[\textbf{Ahlfors regular}]\label{defAR}
We say that a closed set $E\subset\re^{n+1}$ is $n$-dimensional $\mathrm{AR}$ (or simply $\mathrm{AR}$), if there is some uniform constant $C=C_{\mathrm{AR}}$ such that
$$
C^{-1}r^n\leq H^n(E\cap B(x,r))\leq C r^n,\quad 0<r< \diam(E),\quad x\in E.
$$
\end{definition}

\begin{definition}[\textbf{1-sided chord-arc domain} and \textbf{chord-arc domain}]\label{def-1CAD}
We say that $\Omega\subset\re^{n+1}$ is a ``1-sided chord-arc domain'' (1-sided CAD for short) if it satisfies the Corkscrew and Harnack Chain conditions and if $\partial\Omega$ is $\mathrm{AR}$.  Analogously, we say that $\Omega\subset\re^{n+1}$ is a ``chord-arc domain'' (CAD for short) if it is a 1-sided CAD and additionally $\Omega_{\rm ext}=\re^{n+1}\setminus \overline{\Omega}$ also satisfies the Corkscrew condition.
\end{definition}
\medskip

\subsection{Dyadic grids and sawtooths}\label{sec2.3}

We give a lemma concerning the existence of a ``dyadic grid'':

\begin{lemma}[\textbf{``Dyadic grid''} {\cite{davidsemmes1, davidsemmes2}, \cite{christ}}]\label{dyadiccubes}
Suppose that $E\subset\re^{n+1}$ is $n$-di\-men\-sional $\mathrm{AR}$. Then there exist constants $a_0>0$, $\eta>0$ and $C<\infty$ depending only on dimension and the $\mathrm{AR}$ constant, such that for each $k\in\mathbb{Z}$ there is a collection of Borel sets (``cubes'')
$$
\mathbb{D}_k:=\big\{Q_j^k\subset \partial\Omega:\:j\in\mathcal{J}_k\big\},
$$
where $\mathcal{J}_k$ denotes some (possibly finite) index set depending on $k$, satisfying:
\begin{list}{$(\theenumi)$}{\usecounter{enumi}\leftmargin=1cm \labelwidth=1cm \itemsep=0.1cm \topsep=.2cm \renewcommand{\theenumi}{\alph{enumi}}}
\item $E=\bigcup_jQ_j^k$ for each $k\in\mathbb{Z}$.
\item If $m\geq k$ then either $Q_i^m\subset Q_j^k$ or $Q_i^m\cap Q_j^k=\emptyset$.
\item For each $j,k\in\mathbb{Z}$ and each $m>k$, there is a unique $i\in\mathbb{Z}$ such that $Q_j^k\subset Q_i^m$.
\item $\diam(Q_j^k)\leq C\,2^{-k}$.
\item Each $Q_j^k$ contains some ``surface ball'' $\Delta(x_j^k,a_02^{-k})=B(x_j^k,a_02^{-k})\cap E$.
\item $H^n\big(\big\{x\in Q_j^k:\,\dist(x,E\setminus Q_j^k)\leq\tau 2^{-k}\big\}\big)\leq C\tau^\eta H^n(Q_j^k)$, for all $j,k\in\mathbb{Z}$ and for all $\tau\in(0,a_0)$.
\end{list}
\end{lemma}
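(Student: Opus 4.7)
The plan is to follow the classical construction of Christ, as adapted by David and Semmes to the $n$-dimensional Ahlfors regular setting. The key ingredients are a tree of center points across dyadic scales, a cube structure built by a nearest-center rule consistent with the tree, and an iteration argument driven by Ahlfors regularity to produce the small-boundary estimate.

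First, for each $k\in\mathbb{Z}$ I would pick a maximal $2^{-k}$-separated subset $\mathcal{X}^k=\{x_j^k\}_{j\in\mathcal{J}_k}$ of $E$. Maximality gives $E\subset\bigcup_j \Delta(x_j^k,2^{-k})$, while separation makes the balls $\Delta(x_j^k,2^{-k-1})$ pairwise disjoint. I would then build an ancestral tree by assigning to each $x_i^{k+1}$ a unique parent $x_j^k$ with $|x_i^{k+1}-x_j^k|<2^{-k}$, which exists by the covering property and which encodes property (c). Given the tree, I would define the cubes inductively: start at level $k=0$ with the Voronoi partition with respect to $\mathcal{X}^0$ (using a fixed tie-breaking rule), refine at finer levels by partitioning each parent cube among its children via the Voronoi rule restricted to that cube, and coarsen at coarser levels by uniting children into their parent. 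This enforces (a) and (b) by construction; (d) follows by tracing a point of $Q_j^k$ down through its ancestors and summing a geometric series of displacements of size $2^{-m}$ for $m\ge k$; and (e) follows from the separation of $\mathcal{X}^k$ together with the tie-breaking rule, which sequesters a surface ball $\Delta(x_j^k,a_0 2^{-k})$ inside $Q_j^k$ for a suitably small $a_0$ depending only on the AR constant.

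The main obstacle is property (f), the small-boundary estimate, which is the deep content of Christ's lemma. I would argue by iteration across scales: set
\[
S_j^k(\tau) := \{x\in Q_j^k : \dist(x, E\setminus Q_j^k)\le \tau\, 2^{-k}\}
\]
and show that $\sigma(S_j^k(2^{-m}))\le \theta^m\,\sigma(Q_j^k)$ for some $\theta<1$ depending only on $n$ and the AR constant; this yields (f) with $\eta=\log(1/\theta)/\log 2$. The heuristic is that any point near the boundary of $Q_j^k$ lies in some level-$(k+1)$ child cube whose center is within controlled distance of a competing level-$k$ center; but by the lower Ahlfors bound combined with the sequestered interior ball from (e), a definite fraction of the mass of each such child is pushed into the safe interior at the next refinement. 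Quantifying this escape probability uniformly across generations is precisely where both the upper and lower Ahlfors bounds, together with the constant $a_0$ from (e), are indispensable. Iterating the one-step estimate over $m$ generations produces the geometric decay in $m$, and hence the desired exponent $\eta$ for all $\tau\in(0,a_0)$ by dyadic interpolation.
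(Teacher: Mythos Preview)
The paper does not supply a proof of this lemma; it is quoted with attribution to \cite{davidsemmes1,davidsemmes2} and \cite{christ}, and the remarks immediately following the statement only record that Christ's argument works in any space of homogeneous type (with a dyadic ratio $\delta\in(0,1)$, which can be taken equal to $1/2$ by \cite{HMMM}), while the Ahlfors regular case already appears in David--Semmes. There is therefore no in-paper proof to compare your sketch against.

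Your outline is in the spirit of Christ's construction, but two points would need tightening before it becomes a proof. First, your asymmetric definition of the cubes---Voronoi at a fixed level $0$, then refinement for $k>0$ and union-of-children for $k<0$---is not how Christ proceeds, and it leaves property (e) at negative levels unverified: you would have to show that the union of all level-$0$ Voronoi cells descending from $x_j^k$ actually contains $\Delta(x_j^k,a_0 2^{-k})$, and this does not follow from separation plus tie-breaking at level $0$ alone. Christ instead defines every cube uniformly as the set of points whose chain of nearest centers (with tie-breaking) remains in the subtree rooted at $x_j^k$, and then (e) is obtained by an inductive distance argument showing that proximity to $x_j^k$ traps the chain in that subtree at every finer scale. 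Second, your treatment of (f) has the right shape---a definite fraction of each generation escapes the boundary layer, and one iterates---but the one-step escape estimate is exactly the substantive lemma in Christ's paper; it requires both Ahlfors bounds together with a counting of children whose sequestered balls from (e) lie fully in the interior, and your heuristic does not yet isolate or justify that step.
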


A few remarks are in order concerning this lemma.
\begin{list}{$\bullet$}{\leftmargin=0.4cm  \itemsep=0.2cm}

\item In the setting of a general space of homogeneous type, this lemma has been proved by Christ
\cite{christ}, with the dyadic parameter $1/2$ replaced by some constant $\delta \in (0,1)$.
In fact, one may always take $\delta = 1/2$ (cf.  \cite[Proof of Proposition 2.12]{HMMM}).
In the presence of the Ahlfors regularity property, the result already appears in \cite{davidsemmes1, davidsemmes2}.

\item We shall denote by $\mathbb{D}(E)$ the collection of all relevant $Q_j^k$, i.e.,
$$
\mathbb{D}(E):=\bigcup_k\mathbb{D}_k,
$$
where, if $\diam(E)$ is finite, the union runs over those $k\in\mathbb{Z}$ such that $2^{-k}\lesssim\diam(E)$.
\item For a dyadic cube $Q\in\mathbb{D}_k$, we shall set $\ell(Q)=2^{-k}$, and we shall refer to this quantity as the ``length'' of $Q$. It is clear that $\ell(Q)\approx\diam(Q)$. Also, for $Q\in\mathbb{D}(E)$ we will set $k(Q)=k$ if $Q\in\mathbb{D}_k$.

\item Properties $(d)$ and $(e)$ imply that for each cube $Q\in\mathbb{D}(E)$, there is a point $x_Q\in E$, a Euclidean ball $B(x_Q,r_Q)$ and a surface ball $\Delta(x_Q,r_Q):=B(x_Q,r_Q)\cap E$ such that $c\ell(Q)\leq r_Q\leq\ell(Q)$, for some uniform constant $c>0$, and
    \begin{equation}\label{deltaQ}
    \Delta(x_Q,2r_Q)\subset Q\subset\Delta(x_Q,Cr_Q)
    \end{equation}
    for some uniform constant $C>1$. We shall denote these balls and surface balls by
    \begin{equation}\label{deltaQ2}
    B_Q:=B(x_Q,r_Q),\qquad\Delta_Q:=\Delta(x_Q,r_Q),
    \end{equation}
    \begin{equation}\label{deltaQ3}
    \widetilde{B}_Q:=B(x_Q,Cr_Q),\qquad\widetilde{\Delta}_Q:=\Delta(x_Q,Cr_Q),
    \end{equation}
    and we shall refer to the point $x_Q$ as the ``center'' of $Q$.

\item Let $\Omega\subset\re^{n+1}$ be an open set satisfying the Corkscrew condition and such that $\partial\Omega$ is $\mathrm{AR}$. Given $Q\in\mathbb{D}(\partial\Omega)$ we define the ``corkscrew point relative to $Q$'' as $X_Q:=X_{\Delta_Q}$. We note that
    $$
    \delta(X_Q)\approx\dist(X_Q,Q)\approx\diam(Q).
    $$
\end{list}
\medskip

Following \cite[Section 3]{hofmartell} we next introduce the notion of ``Carleson region'' and ``discretized sawtooth''. Given a cube  $Q\in\mathbb{D}(E)$, the ``discretized Carleson region'' $\mathbb{D}_Q$ relative to $Q$ is defined by
$$
\mathbb{D}_Q:=\big\{Q'\in\mathbb{D}(E):\,Q'\subset Q\big\}.
$$
Let $\mathcal{F}=\{Q_i\}\subset\mathbb{D}(E)$ be a family of disjoint cubes. The ``global discretized sawtooth'' relative to $\mathcal{F}$ is the collection of cubes $Q\in\mathbb{D}(E)$ that are not contained in any $Q_i\in\mathcal{F}$, that is,
$$
\mathbb{D}_\mathcal{F}:=\mathbb{D}(E)\setminus\bigcup_{Q_i\in\mathcal{F}}\mathbb{D}_{Q_i}.
$$
For a given $Q\in\mathbb{D}(E)$, the ``local discretized sawtooth'' relative to $\mathcal{F}$ is the collection of cubes in $\mathbb{D}_Q$ that are not contained in any $Q_i\in\mathcal{F}$ or, equivalently,
$$
\mathbb{D}_{\mathcal{F},Q}:=\mathbb{D}_{Q}\setminus\bigcup_{Q_i\in\mathcal{F}}\mathbb{D}_{Q_i}=\mathbb{D}_\mathcal{F}\cap\mathbb{D}_Q.
$$

We also introduce the ``geometric'' Carleson regions and sawtooths. In the sequel, $\Omega\subset\re^{n+1}$ ($n\geq 2$) will be a 1-sided $\mathrm{CAD}$. Given $Q\in\mathbb{D}(\partial\Omega)$ we want to define some associated regions which inherit the good properties of $\Omega$. Let $\mathcal{W}=\mathcal{W}(\Omega)$ denote a collection of (closed) dyadic Whitney cubes of $\Omega\subset\re^{n+1}$, so that the cubes in $\mathcal{W}$ form a pairwise non-overlapping covering of $\Omega$, which satisfy
\begin{equation}\label{constwhitney}
4\diam(I)\leq\dist(4I,\partial\Omega)\leq\dist(I,\partial\Omega)\leq 40\diam(I),\qquad\forall I\in\mathcal{W},
\end{equation}
and
$$
\diam(I_1)\approx\diam(I_2),\,\text{ whenever }I_1\text{ and }I_2\text{ touch}.
$$
Let $X(I)$ denote the center of $I$, let $\ell(I)$ denote the sidelength of $I$, and write $k=k_I$ if $\ell(I)=2^{-k}$.

Given $0<\lambda<1$ and $I\in\mathcal{W}$ we write $I^*=(1+\lambda)I$ for the ``fattening'' of $I$. By taking $\lambda$ small enough, we can arrange matters, so that, first, $\dist(I^*,J^*)\approx\dist(I,J)$ for every $I,J\in\mathcal{W}$, and secondly, $I^*$ meets $J^*$ if and only if $\partial I$ meets $\partial J$ (the fattening thus ensures overlap of $I^*$ and $J^*$ for any pair $I,J\in\mathcal{W}$ whose boundaries touch, so that the Harnack Chain property then holds locally in $I^*\cup J^*$, with constants depending upon $\lambda$). By picking $\lambda$ sufficiently small, say $0<\lambda<\lambda_0$, we may also suppose that there is $\tau\in(1/2,1)$ such that for distinct $I,J\in\mathcal{W}$, we have that $\tau J\cap I^*=\emptyset$. In what follows we will need to work with dilations $I^{**}=(1+2\lambda)I$ or $I^{***}=(1+4\lambda)I$, and in order to ensure that the same properties hold we further assume that $0<\lambda<\lambda_0/4$.

For every $Q\in\mathbb{D}(\partial\Omega)$ we can construct a family $\mathcal{W}_Q^*\subset\mathcal{W}$, and define
$$
U_Q:=\bigcup_{I\in\mathcal{W}_Q^*}I^*,
$$
satisfying the following properties: $X_Q\in U_Q$ (actually, $X_Q$ can be taken to be the center of some Whitney cube $I\in\mathcal{W}_Q^*$), and there are uniform constants $k^*$ and $K_0$ such that
\begin{gather*}
k(Q)-k^*\leq k_I\leq k(Q)+k^*,\quad\forall I\in\mathcal{W}_Q^*,
\\[4pt]
X(I)\rightarrow_{U_Q} X_Q,\quad\forall I\in\mathcal{W}_Q^*,
\\[4pt]
\dist(I,Q)\leq K_0 2^{-k(Q)},\quad\forall I\in\mathcal{W}_Q^*.
\end{gather*}
Here, $X(I)\rightarrow_{U_Q} X_Q$ means that the interior of $U_Q$ contains all balls in a Harnack Chain (in $\Omega$) connecting $X(I)$ to $X_Q$, and moreover, for any point $Z$ contained in any ball in the Harnack Chain, we have $\dist(Z,\partial\Omega)\approx\dist(Z,\Omega\setminus U_Q)$ with uniform control of the implicit constants. The constants $k^*, K_0$ and the implicit constants in the condition $X(I)\rightarrow_{U_Q} X_Q$, depend on at most allowable parameters and on $\lambda$. Moreover, given $I\in\mathcal{W}$ we have that $I\in\mathcal{W}_{Q_I}^*$, where $Q_I\in\mathbb{D}(\partial\Omega)$ satisfies $\ell(Q_I)=\ell(I)$, and contains any fixed $\widehat{y}\in\partial\Omega$ such that $\dist(I,\partial\Omega)=\dist(I,\widehat{y})$. The reader is referred to \cite{hofmartell} for full details.

For a given $Q\in\mathbb{D}(\partial\Omega)$, the ``Carleson box'' relative to $Q$ is defined by
$$
T_Q:=\interior\bigg(\bigcup_{Q'\in\mathbb{D}_Q}U_{Q'}\bigg).
$$
For a given family $\mathcal{F}=\{Q_i\}$ of pairwise disjoint cubes and a given $Q\in\mathbb{D}(\partial\Omega)$, we define the ``local sawtooth region'' relative to $\mathcal{F}$ by
\begin{equation}\label{defomegafq}
\Omega_{\mathcal{F},Q}=\interior\bigg(\bigcup_{Q'\in\mathbb{D}_{\mathcal{F},Q}}U_{Q'}\bigg)=\interior\bigg(\bigcup_{I\in\mathcal{W}_{\mathcal{F},Q}}I^*\bigg),
\end{equation}
where $\mathcal{W}_{\mathcal{F},Q}:=\bigcup_{Q'\in\mathbb{D}_{\mathcal{F},Q}}\mathcal{W}_{Q'}^*$. Analogously, we can slightly fatten the Whitney boxes and use $I^{**}$ to define new fattened Whitney regions and sawtooth domains. More precisely, for every $Q\in\mathbb{D}(\partial\Omega)$,
$$
T_Q^*:=\interior\bigg(\bigcup_{Q'\in\mathbb{D}_Q}U_{Q'}^*\bigg),\qquad\Omega^*_{\mathcal{F},Q}:=\interior\bigg(\bigcup_{Q'\in\mathbb{D}_Q}U_{Q'}^*\bigg), \qquad U_{Q}^*:=\bigcup_{I\in\mathcal{W}_Q^*}I^{**}.
$$
Similarly, we can define $T_Q^{**}$, $\Omega^{**}_{\mathcal{F},Q}$ and $U^{**}_{Q}$ by using $I^{***}$ in place of $I^{**}$.

Given a pairwise disjoint family $\mathcal{F}\subset\mathbb{D}$ (we also allow $\mathcal{F}$ to be the null set) and a constant $\rho>0$, we derive another family $\mathcal{F}(\rho)\subset\mathbb{D}$ from $\mathcal{F}$ as follows. Augment $\mathcal{F}$ by adding cubes $Q\in\mathbb{D}$ whose sidelength $\ell(Q)\leq\rho$ and let $\mathcal{F}(\rho)$ denote the corresponding collection of maximal cubes. Note that the corresponding discrete sawtooth region $\mathbb{D}_{\mathcal{F}(\rho)}$ is the union of all cubes $Q\in\mathbb{D}_{\mathcal{F}}$ such that $\ell(Q)>\rho$. For a given constant $\rho$ and a cube $Q\in\mathbb{D}$, let $\mathbb{D}_{\mathcal{F}(\rho),Q}$ denote the local discrete sawtooth region and let $\Omega_{\mathcal{F}(\rho),Q}$ denote the geometric sawtooth region relative to it.

Given $Q\in\mathbb{D}(\partial\Omega)$ and $0<\varepsilon<1$, if we take $\mathcal{F}_0=\emptyset$, one has that $\mathcal{F}_0(\varepsilon\ell(Q))$ is the collection
of $Q'\in\mathbb{D}(\partial\Omega)$ such that $\varepsilon\ell(Q)/2 < \ell(Q') \leq\varepsilon\ell(Q)$, hence $\mathbb{D}_{\mathcal{F}_0(\varepsilon\ell(Q)),Q}=\{Q'\in\mathbb{D}_Q:\,\ell(Q')>\varepsilon\ell(Q)\}$. We then introduce $U_{Q,\varepsilon}=\Omega_{\mathcal{F}_0(\varepsilon\ell(Q)),Q}$ , which is a Whitney region relative to $Q$ whose distance to $\partial\Omega$ is of the order of $\varepsilon\ell(Q)$. For later use, we observe that given $Q_0\in\mathbb{D}(\partial\Omega)$, the sets $\{U_{Q,\varepsilon}\}_{Q\in\mathbb{D}_{Q_0}}$ have bounded overlap with constant that may
depend on $\varepsilon$. Indeed, suppose that there is $X\in U_{Q,\varepsilon}\cap U_{Q',\varepsilon}$ with $Q,Q'\in\mathbb{D}_{Q_0}$. By construction
$\ell(Q)\approx_{\varepsilon}\delta(X)\approx_{\varepsilon}\ell(Q')$ and $\dist(Q,Q')\leq\dist(X,Q)+\dist(X,Q')\lesssim_{\varepsilon}\ell(Q)+\ell(Q')\approx_{\varepsilon}\ell(Q)$.
The bounded overlap property, with constants depending on $\varepsilon$, follows then at once.

\medskip

Following \cite{hofmartell}, one can easily see that there exist constants $0<\kappa_1<1$ and $\kappa_0\geq \max\{2C,4/c\}$ (with $C$ the constant in \eqref{deltaQ3}, and $c$ such that $c\ell(Q)\leq r_Q$), depending only on the allowable parameters, so that
\begin{equation}\label{definicionkappa12}
\kappa_1B_Q\cap\Omega\subset T_Q\subset T_Q^*\subset T_Q^{**}\subset \overline{T_Q^{**}}\subset\kappa_0B_Q\cap\overline{\Omega}=:\tfrac{1}{2}B_Q^*\cap\overline{\Omega},
\end{equation}
where $B_Q$ is defined as in \eqref{deltaQ2}.
\medskip

\subsection{PDE estimates}
Next, we recall several facts concerning the elliptic measures and the Green functions. For our first results we will only assume that $\Omega\subset\re^{n+1}$, $n\geq 2$, is an open set, not necessarily connected, with $\partial\Omega$ satisfying the $\mathrm{AR}$ property. Later we will focus on the case where $\Omega$ is a 1-sided $\mathrm{CAD}$.

\begin{definition}\label{ellipticoperator}
Let $Lu=-\div(A\nabla u)$ be a variable coefficient second order divergence form operator with $A(X)=(a_{i,j}(X))_{i,j=1}^{n+1}$ being a real (not necessarily symmetric) matrix with $a_{i,j}\in L^{\infty}(\Omega)$ for $1\leq i,j\leq n+1$, and $A$ uniformly elliptic, that is, there exists $\Lambda\geq 1$ such that
$$
\Lambda^{-1}|\xi|^2\leq A(X)\xi\cdot\xi,\qquad |A(X)\xi\cdot\zeta|\leq\Lambda|\xi||\zeta|,
$$
for all $\xi,\zeta\in\re^{n+1}$ and almost every $X\in\Omega$.
\end{definition}

In what follows we will only be working with this kind of operators, we will refer to them as ``elliptic operators'' for the sake of simplicity. We write $L^\top$ to denote the transpose of $L$, or, in other words, $L^\top u=-\div(A^\top\nabla u)$ with $A^\top$ being the transpose matrix of $A$.

We say that a function $u\in W^{1,2}_{\text{loc}}(\Omega)$ is a weak solution of $Lu=0$ in $\Omega$, or that $Lu=0$ in the weak sense, if
$$
\iint_{\Omega}A(X)\nabla u(X)\cdot\nabla\varphi(X)\,dX=0,\qquad\forall\varphi\in C_c^{\infty}(\Omega).
$$

Associated with $L$ and $L^\top$ one can respectively construct the elliptic measures $\{\omega_L^X\}_{X\in\Omega}$ and $\{\omega_{L^\top}^X\}_{X\in\Omega}$, and the Green functions $G_L$ and $G_{L^\top}$ (see \cite{HMT-general} for full details). We next present some definitions and properties that will be used throughout this paper.

\begin{definition}\label{defi:Ainfty}
Let $\Omega\subset\re^{n+1}$ be a 1-sided $\mathrm{CAD}$ and let $L$ be a real (non-necessarily symmetric) elliptic operator. We say that the elliptic measure $\omega_{L}\in A_\infty(\partial\Omega)$ if there exist constants $0<\alpha,\beta<1$ such that given an arbitrary surface ball $\Delta_0=B_0\cap  \partial\Omega$, with $B_0=B(x_0,r_0)$, $x_0\in  \partial\Omega$, $0<r_0<\diam( \partial\Omega)$, and for every surface ball $\Delta=B\cap \partial\Omega$ centered at $\partial\Omega$ with $B\subset B_0$, and for every Borel set $F\subset\Delta$, we have that
\begin{equation}\label{Ainfty-alpha-beta}
\frac{\omega_L^{X_{\Delta_0}}(F)}{\omega_L^{X_{\Delta_0}}(\Delta)}\le \alpha
\implies
\frac{\sigma(F)}{\sigma(\Delta)}\le \beta.
\end{equation}
\end{definition}

It is well known (see \cite{MR807149}, \cite{coifman1974}) that since $\sigma$ is a doubling measure (recall that $\partial\Omega$ satisfies the $\mathrm{AR}$ condition), $\omega_L\in A_\infty(\partial\Omega)$ if and only if $\omega_L\ll\sigma$ in $\partial\Omega$  and there exists $1<q<\infty$ such that
for every $\Delta_0$ and $\Delta$ as above
$$
\bigg(\barint_{\Delta} k_L^{X_{\Delta_0}}(x)^{q}\,d\sigma(x)\bigg)^{\frac{1}{q}}\leq
C\barint_{\Delta}k_L^{X_{\Delta_0}}(x)\,d\sigma(x),
$$
where $k_L^{X_{\Delta_0}}=d\omega_L^{X_{\Delta_0}}/d\sigma$ is the Radon-Nikodym derivative.  Moreover since $\Omega$ is a 1-sided $\mathrm{CAD}$ the latter is equivalent to the scale invariant estimate (see \cite{HMT-general})
\begin{equation}\label{rhqL}
\int_{\Delta_0}k_{L}^{X_{\Delta_0}}(y)^q\,d\sigma(y)\leq C\sigma(\Delta_0)^{1-q}.
\end{equation}
for every surface ball $\Delta_0$.

\begin{lemma}\label{bourgain}
Suppose that $\Omega\subset\re^{n+1}$ is an open set such that $\partial\Omega$ satisfies the $\mathrm{AR}$ property. Let $L$ be an elliptic operator, there exist constants $c<1$ and $C>1$ (depending only on the $\mathrm{AR}$ constant and on the ellipticity of $L$) such that for every $x\in\partial\Omega$ and every $0<r<\diam(\partial\Omega)$, we have
$$
\omega_L^Y(\Delta(x,r))\geq \frac{1}{C},\qquad\forall\, Y\in B(x,cr)\cap\Omega.
$$
\end{lemma}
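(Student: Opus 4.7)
The plan is to deduce this lower bound from a boundary H\"older continuity estimate for $L$-harmonic functions vanishing on a surface ball, and then to choose $c$ so small that the resulting H\"older decay beats $1/2$.

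First, I would consider the function $v(Z):=\omega_L^Z(\partial\Omega\setminus\Delta(x,r))=1-\omega_L^Z(\Delta(x,r))$, which is a bounded $L$-harmonic function in $\Omega$ with $0\le v\le 1$ and whose boundary data vanishes on $\Delta(x,r)$. The sought-for lower bound $\omega_L^Y(\Delta(x,r))\ge 1/C$ is equivalent to the upper bound $v(Y)\le 1-1/C$, so it suffices to show that $v(Y)$ is small for $Y$ very close to $x$.

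Next, I would invoke the standard boundary H\"older regularity estimate. The Ahlfors regularity of $\partial\Omega$ implies the capacitary density bound
\begin{equation*}
\mathrm{cap}_2\bigl(\overline{B(y,\rho)}\setminus\Omega;\, B(y,2\rho)\bigr)\gtrsim \rho^{n-1},\qquad y\in\partial\Omega,\ 0<\rho<\diam(\partial\Omega),
\end{equation*}
because $n$-dimensional Hausdorff measure dominates the Bessel $2$-capacity of subsets of $\re^{n+1}$. Combined with Caccioppoli's inequality and interior De Giorgi--Nash--Moser theory, this thickness property produces (via the standard oscillation-decay iteration, or by invoking the Wiener criterion) constants $\alpha\in(0,1)$ and $C_0\ge 1$, depending only on $n$, the $\mathrm{AR}$ constant and $\Lambda$, such that
\begin{equation*}
v(Y)\le C_0\,\bigg(\frac{|Y-x|}{r}\bigg)^\alpha\,\|v\|_{L^\infty(\Omega)}\le C_0\,\bigg(\frac{|Y-x|}{r}\bigg)^\alpha,\qquad Y\in\Omega\cap B(x,r/2).
\end{equation*}
Choosing $c\in(0,1/2)$ so small that $C_0 c^\alpha\le 1/2$, one gets $v(Y)\le 1/2$ for every $Y\in B(x,cr)\cap\Omega$, i.e. $\omega_L^Y(\Delta(x,r))\ge 1/2$, which proves the lemma with $C=2$.

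The only ingredient that requires genuine care is the boundary H\"older estimate itself. In the present generality, where $\Omega$ is merely an open set (not necessarily even connected) with Ahlfors regular boundary, the main task is to verify quantitatively that the $n$-dimensional AR condition on $\partial\Omega$ transfers to enough capacitary thickness of $\Omega^c$ to run the De Giorgi--Moser iteration up to the boundary. I expect this to consume the bulk of a detailed write-up, but it is essentially classical and is already packaged into the framework developed in \cite{HMT-general}, so the argument can safely quote it.
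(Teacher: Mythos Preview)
The paper does not actually prove this lemma; it refers the reader to \cite{bou}, \cite{HMT-general}, \cite[Theorem~6.18]{HKM} and \cite[Section~3]{ZHAO}. Your argument is correct and is one of the standard routes to the result: Ahlfors regularity of $\partial\Omega$ yields the capacity density condition, which in turn gives boundary H\"older decay for nonnegative solutions vanishing on a surface ball, and Bourgain's bound follows by applying this decay to $v(Y)=\omega_L^Y(\partial\Omega\setminus\Delta(x,r))$. This is essentially the mechanism packaged in \cite{HMT-general} and \cite{HKM}, and the paper itself uses exactly the same trick later when deriving \eqref{est-HC-daydic}. Bourgain's original proof in \cite{bou} is somewhat more direct---it iterates a one-step decay estimate on the elliptic measure itself, without passing through the full H\"older regularity statement---but the two approaches are closely related and yours is entirely adequate here.

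One small point worth tightening: the boundary data of $v$ is discontinuous along $\partial B(x,r)\cap\partial\Omega$, so the boundary H\"older estimate (which is typically stated for solutions continuous up to the relevant boundary piece, as in Lemma~\ref{proppde}$(a)$) does not apply to $v$ on $B(x,r)$ directly. The routine fix is to dominate $v$ by the solution $w$ with continuous boundary data $\phi$ satisfying $\mathbf{1}_{\partial\Omega\setminus B(x,r)}\le\phi\le\mathbf{1}_{\partial\Omega\setminus B(x,r/2)}$, apply the H\"older decay to $w$ on $B(x,r/2)$, and then conclude for $v\le w$. This costs only harmless constants; the paper does precisely this in \eqref{qweaffer} and the display following it.
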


We refer the reader to \cite[Lemma 1]{bou} for the proof in the harmonic case and to \cite{HMT-general} for general elliptic operators. See also \cite[Theorem 6.18]{HKM} and \cite[Section 3]{ZHAO}.

\medskip

The proofs of the following lemmas may be found in \cite{HMT-general}. We note that, in particular, the $\mathrm{AR}$ hypothesis implies that $\partial\Omega$ satisfies the Capacity Density Condition, hence $\partial\Omega$ is Wiener regular at every point (see \cite[Lemma 3.27]{hoflemartellnystrom}).

\begin{lemma}\label{lemagreen}
Suppose that $\Omega\subset\re^{n+1}$ is an open set such that $\partial\Omega$ satisfies the $\mathrm{AR}$ property. Given an elliptic operator $L$, there exist $C>1$ (depending only on dimension and on the ellipticity of $L$) and $c_\theta>0$ (depending on the above parameters and on $\theta\in (0,1)$) such that $G_L$, the Green function associated with $L$, satisfies
\begin{gather}\label{sizestimate}
G_L(X,Y)\leq C|X-Y|^{1-n};
\\[0.15cm]
c_\theta|X-Y|^{1-n}\leq G_L(X,Y),\quad\text{if }\,|X-Y|\leq\theta\delta(X),\quad\theta\in(0,1);
\\[0.15cm]
G_L(\cdot,Y)\in C\big(\overline{\Omega}\setminus\{Y\}\big)\quad\text{and}\quad G_L(\cdot,Y)\rest{\partial\Omega}\equiv 0\quad\forall\,Y\in\Omega;
\\[0.15cm]
G_L(X,Y)\geq 0,\quad\forall X,Y\in\Omega,\quad X\neq Y;
\\[0.15cm]
G_L(X,Y)=G_{L^\top}(Y,X),\quad\forall X,Y\in\Omega,\quad X\neq Y.
\end{gather}
Moreover, $G_L(\cdot,Y)\in W^{1,2}_{\rm loc}(\Omega\setminus\{Y\})$ for every $Y\in \Omega$, and satisfies $LG_L(\cdot,Y)=\delta_Y$ in the weak sense in $\Omega$, that is,
\begin{equation}\label{greenpole}
\int_{\Omega}A(X)\nabla_XG_L(X,Y)\cdot\nabla\varphi(X)\,dX=\varphi(Y),\quad\forall\,\varphi\in C_c^{\infty}(\Omega).
\end{equation}
\end{lemma}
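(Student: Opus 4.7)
The plan is to adapt the classical Grüter--Widman construction of the Green function (originally carried out for smooth domains) to the present setting where the only quantitative information on $\partial\Omega$ is Ahlfors regularity; the key reason this succeeds is that $\mathrm{AR}$ implies the Capacity Density Condition, which in turn guarantees Wiener regularity at every boundary point. I would first construct $G_L(\cdot,Y)$ by an exhaustion argument: fix $Y\in\Omega$, pick a family of smooth bounded subdomains $\Omega_k\nearrow\Omega$ with $Y\in\Omega_k$, build the Green functions $G_L^{(k)}(\cdot,Y)$ in each $\Omega_k$ via Grüter--Widman, and pass to the limit using the uniform pointwise bound below.

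Next I would prove the size estimate
\[
G_L(X,Y)\leq C|X-Y|^{1-n}
\]
by applying Moser's local boundedness to the weak solution $G_L(\cdot,Y)$ on the annulus $\{|X-Y|\approx r\}$ and combining it with the Caccioppoli inequality and the classical $W^{1,1}$-type estimate
\[
\iint_{\{G_L(\cdot,Y)>t\}}|\nabla G_L(\cdot,Y)|^2\,dX \lesssim t,
\]
which is obtained by testing the equation against truncations of $G_L(\cdot,Y)$. The matching lower bound in the interior region $|X-Y|\leq\theta\delta(X)$ then follows from the De Giorgi--Nash--Moser Harnack inequality applied on balls $B(X,\theta|X-Y|)$ together with the size estimate, exactly as in the Laplacian case. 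Nonnegativity is immediate from the maximum principle applied to the approximants $G_L^{(k)}$ and preserved under the limit.

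For the continuity up to $\partial\Omega$ and the vanishing on the boundary, I would invoke the standard fact (see the reference to \cite[Lemma 3.27]{hoflemartellnystrom} recorded in the paper) that $\mathrm{AR}$ implies $\mathrm{CDC}$ and hence every boundary point is Wiener regular for $L$. Combined with the size estimate and the fact that $G_L(\cdot,Y)\in W^{1,2}$ on $\Omega\setminus B(Y,r)$ with zero boundary trace on $\partial\Omega$, Wiener regularity gives $G_L(X,Y)\to 0$ as $X\to x\in\partial\Omega$, and interior Hölder regularity gives continuity away from $Y$. The defining distributional identity $LG_L(\cdot,Y)=\delta_Y$ is then a consequence of the construction (it holds for each $G_L^{(k)}$ and passes to the limit by dominated convergence using the size bound).

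The symmetry $G_L(X,Y)=G_{L^\top}(Y,X)$ is the step where I expect the main technical nuisance, because the natural test function $\varphi=G_{L^\top}(\cdot,X)$ is not compactly supported and is singular at $X$. I would handle this by testing the weak formulation \eqref{greenpole} against smooth cutoffs of $G_{L^\top}(\cdot,X)$ that vanish near both $X$ and $\partial\Omega$, then pass to the limit using the size bound and the boundary decay (Wiener regularity again). The boundary contributions vanish because $G_L,G_{L^\top}$ are continuous and zero on $\partial\Omega$; the point contributions at $X$ and $Y$ yield $G_{L^\top}(Y,X)$ and $G_L(X,Y)$ respectively, and the symmetry drops out. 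Uniqueness of the Green function (again via the maximum principle and Wiener regularity) ensures that the object built from the exhaustion agrees with any other reasonable candidate.
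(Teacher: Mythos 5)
Your outline is correct and is essentially the standard route: the paper itself gives no proof of Lemma \ref{lemagreen}, deferring entirely to \cite{HMT-general} after noting precisely the point you lean on (that $\mathrm{AR}$ implies the Capacity Density Condition and hence Wiener regularity, via \cite[Lemma 3.27]{hoflemartellnystrom}), and the Gr\"uter--Widman exhaustion with the truncation/Caccioppoli size estimate, Harnack lower bound, maximum principle, and duality for the symmetry $G_L(X,Y)=G_{L^\top}(Y,X)$ is exactly the construction the cited reference carries out for non-symmetric operators. The only place I would tighten your sketch is the interior lower bound: Harnack plus the upper bound alone does not produce it --- one also needs the normalization coming from testing \eqref{greenpole} against a cutoff adapted to $B(Y,|X-Y|)$ (or comparison with the Green function of a ball contained in $\Omega$, which is where the hypothesis $|X-Y|\le\theta\delta(X)$ enters), but this is the classical argument you allude to and not a genuine gap.
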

\medskip

\begin{lemma}\label{proppde}
Suppose that $\Omega\subset\re^{n+1}$ is a 1-sided $\mathrm{CAD}$. Let $L$ be an elliptic operator, there exist $C$, $0<\gamma\leq 1$ (depending only on dimension, the $1$-sided $\mathrm{CAD}$ constants and the ellipticity of $L$), such that for every $B_0=B(x_0,r_0)$ with $x_0\in\partial\Omega$, $0<r_0<\diam(\partial\Omega)$, and $\Delta_0=B_0\cap\partial\Omega$ we have the following properties:
\begin{list}{$(\theenumi)$}{\usecounter{enumi}\leftmargin=1cm \labelwidth=1cm \itemsep=0.1cm \topsep=.2cm \renewcommand{\theenumi}{\alph{enumi}}}

\item If $0\leq u\in W^{1,2}_{\rm loc}(B_0\cap\Omega)\cap C(\overline{B_0\cap\Omega})$ is a weak solution of $Lu=0$ in $B_0\cap\Omega$ such that $u\equiv 0$ in $\Delta_0$, then
$$
u(X)\leq C\bigg(\frac{|X-x_0|}{r_0}\bigg)^\gamma\sup_{Y\in\overline{B_0\cap\Omega}}u(Y),\qquad\forall\,X\in \frac{1}{2}B_0\cap\Omega.
$$

\item  If $B=B(x,r)$ with $x\in\partial\Omega$ and $\Delta=B\cap\partial\Omega$ is such that $2B\subset B_0$, then for all $X\in\Omega\setminus B_0$ we have that
    $$
\frac{1}{C}\omega_L^X(\Delta)\leq r^{n-1} G_L(X,X_\Delta)\leq C\omega_L^X(\Delta).
    $$

\item  If $X\in\Omega\setminus 4B_0$ then
    $$
    \omega_{L}^X(2\Delta_0)\leq C\omega_{L}^X(\Delta_0).
    $$

\item  If $B=B(x,r)$ with $x\in\partial\Omega$ and $\Delta:=B\cap\partial\Omega$ is such that $B\subset B_0$, then for every $X\in\Omega\setminus 2\kappa_0B_0$ with $\kappa_0$ as in \eqref{definicionkappa12}, we have that
$$
\frac{1}{C}\omega_L^{X_{\Delta_0}}(\Delta)\leq \frac{\omega_L^X(\Delta)}{\omega_L^X(\Delta_0)}\leq C\omega_L^{X_{\Delta_0}}(\Delta).
$$
Moreover, if $F\subset\Delta_0$ is a Borel set then
$$
\frac{1}{C}\omega_L^{X_{\Delta_0}}(F)\leq \frac{\omega_L^X(F)}{\omega_L^X(\Delta_0)}\leq C\omega_L^{X_{\Delta_0}}(F).
$$

\end{list}
\end{lemma}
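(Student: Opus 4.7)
The plan is to establish the four items (a)--(d) in sequence, following the classical CFMS/NTA template of \cite{MR676988} adapted to the 1-sided CAD setting as in \cite{HMT-general}. The key inputs are the Ahlfors regularity of $\partial\Omega$ (which implies the Capacity Density Condition, hence Wiener regularity), the Corkscrew/Harnack Chain structure, Bourgain's lower bound (Lemma \ref{bourgain}), and the Green function estimates from Lemma \ref{lemagreen}. Each item will feed into the next.

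For part (a), the boundary H\"older estimate, I would extend $u$ by zero across $\Delta_0$ and use the Capacity Density Condition together with Moser's iteration applied to the subsolution $u^+$ (and to suitable powers in the De Giorgi framework). The standard consequence is an oscillation decay of the form $\sup_{B(y,\theta r)\cap\Omega} u \leq \eta\, \sup_{B(y,r)\cap\Omega} u$ for every $y\in\Delta_0$, every $r\leq r_0/4$, and some fixed $\eta\in(0,1)$, $\theta\in(0,1)$ depending only on ellipticity and the AR constant. Iterating this decay from the scale $r_0$ down to the scale $|X-x_0|$ yields the claimed estimate with $\gamma=\log(1/\eta)/\log(1/\theta)$.

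For part (b), the CFMS estimate, fix $X\in\Omega\setminus B_0$ and $\Delta=B\cap\partial\Omega$ with $2B\subset B_0$. The lower bound $r^{n-1}G_L(X,X_\Delta)\geq c\,\omega_L^X(\Delta)$ follows from Bourgain's estimate: the function $Y\mapsto \omega_L^Y(\Delta)$ is a non-negative bounded $L$-solution in $\Omega\setminus\overline{\Delta}$, vanishing on $\partial\Omega\setminus\overline{\Delta}$, and it satisfies $\omega_L^Y(\Delta)\geq 1/C$ on $B(x,cr)\cap\Omega$ by Lemma \ref{bourgain}; on the other hand, the function $Y\mapsto r^{n-1}G_L(X,Y)$ is a non-negative $L^\top$-solution in $\Omega\setminus\{X\}$ bounded above on $\partial(2B)\cap\Omega$ by a constant (using the size bound from Lemma \ref{lemagreen} and the fact that $|Y-X|\gtrsim r_0$). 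A maximum principle comparison on $\Omega\setminus \overline{2B}$, coupled with the duality $G_L(X,Y)=G_{L^\top}(Y,X)$, then transfers information between the two solutions. For the upper bound, invoke the interior lower bound $G_L(X_\Delta,Y)\gtrsim r^{1-n}$ for $Y$ in a small ball around $X_\Delta$, then use the adjoint Green representation together with Lemma \ref{bourgain} applied to $L^\top$, obtaining the reverse inequality. Part (a) is used repeatedly to verify the hypotheses of the comparison on the boundary portion.

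Parts (c) and (d) follow formally from (b). For (c), apply (b) at the ball $2B_0$ (with corkscrew $X_{2\Delta_0}$) and at $B_0$ to write $\omega_L^X(2\Delta_0) \lesssim r_0^{n-1} G_L(X, X_{2\Delta_0})$ and $\omega_L^X(\Delta_0)\gtrsim r_0^{n-1} G_L(X,X_{\Delta_0})$. Since $X\notin 4B_0$, both $X_{\Delta_0}$ and $X_{2\Delta_0}$ can be connected by a Harnack chain of uniformly bounded length lying inside $\Omega\setminus\{X\}$, where $G_L(X,\cdot)$ is a non-negative $L^\top$-solution; interior Harnack then gives $G_L(X,X_{2\Delta_0})\approx G_L(X,X_{\Delta_0})$. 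For (d), apply (b) at the scales $r$ and $r_0$ (the condition $X\in\Omega\setminus 2\kappa_0 B_0$ ensures $X$ lies outside both relevant Carleson boxes via \eqref{definicionkappa12}) to reduce the ratio $\omega_L^X(\Delta)/\omega_L^X(\Delta_0)$ to a ratio of Green functions; a boundary Harnack argument based on (a) propagates this ratio to the change-of-pole point $X_{\Delta_0}$. The Borel-set statement then follows from a monotone class / exhaustion argument, since the estimate holds uniformly in the choice of $\Delta\subset\Delta_0$. The main technical obstacle is part (b) in the non-symmetric setting: the function $G_L(X,\cdot)$ solves $L^\top$ rather than $L$, so the comparison with $\omega_L^Y(\Delta)$ must pass through the adjoint operator via the symmetry $G_L(X,Y)=G_{L^\top}(Y,X)$, applying Bourgain-type and H\"older-type estimates for both $L$ and $L^\top$ in tandem.
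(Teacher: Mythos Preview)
The paper does not actually prove this lemma: immediately before Lemma~\ref{lemagreen} it states ``The proofs of the following lemmas may be found in \cite{HMT-general},'' and no further argument is given. Your sketch is a faithful outline of the standard CFMS/Jerison--Kenig machinery adapted to the 1-sided setting, which is precisely what \cite{HMT-general} carries out, so in substance you are supplying what the paper omits rather than diverging from it.

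One small point worth tightening in your part~(b): the comparison you describe mixes the roles of $L$ and $L^\top$ slightly. For the upper bound $r^{n-1}G_L(X,X_\Delta)\lesssim\omega_L^X(\Delta)$, both $Y\mapsto r^{n-1}G_L(Y,X_\Delta)=r^{n-1}G_{L^\top}(X_\Delta,Y)$ and $Y\mapsto\omega_L^Y(\Delta)$ are $L$-solutions in $\Omega\setminus\{X_\Delta\}$, so the maximum principle applies directly on $\Omega\setminus\overline{B(X_\Delta,cr/2)}$ without passing to the adjoint. The adjoint enters only in the lower bound, where one compares $G_L(X,\cdot)$ (an $L^\top$-solution away from $X$) with a suitable barrier built from $\omega_{L^\top}$, or equivalently uses the boundary H\"older decay from part~(a) for $\omega_L^{\,\cdot}(\Delta)$ outside $2B$ together with Carleson's estimate. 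Also, part~(d) genuinely requires the boundary Harnack principle (comparison of two non-negative solutions vanishing on a common boundary piece), which is a separate ingredient beyond~(a) and~(b); you allude to it but it deserves to be named explicitly, since in the 1-sided CAD setting it is one of the main results of \cite{HMT-general}.
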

\bigskip

\section{Proof of Theorem \ref{theor:cme-implies-ainf}}\label{section:Proof-CME}

\subsection{The Carleson measure condition implies $A_\infty$ }\label{section:Proof-CME:a->b}

To prove that : $(a)\Longrightarrow (b)$ we first introduce some notation.

\begin{definition}\label{defi-cover}
	Let $E\subset\mathbb{R}^{n+1}$ be an $n$-dimensional $\mathrm{AR}$ set. Fix $Q_0\in\mathbb{D}(E)$ and let $\mu$ be a regular Borel measure on $Q_0$. Given $\varepsilon_0\in(0,1)$ and a Borel set $F\subset Q_0$, a good $\varepsilon_0$-cover of $F$ with respect to $\mu$, of length $k\in\mathbb{N}$, is a collection $\{\mathcal{O}_\ell\}_{\ell=1}^k$ of Borel subsets of $Q_0$, together with pairwise disjoint families $\mathcal{F}_\ell=\{Q_i^\ell\}\subset\mathbb{D}_{Q_0}$, such that
	\begin{list}{$(\theenumi)$}{\usecounter{enumi}\leftmargin=1cm \labelwidth=1cm \itemsep=0.1cm \topsep=.2cm \renewcommand{\theenumi}{\alph{enumi}}}
		\item $F\subset \mathcal{O}_k\subset\mathcal{O}_{k-1}\subset\cdots\subset\mathcal{O}_2\subset\mathcal{O}_1\subset Q_0$,

		\item $\mathcal{O}_\ell=\bigcup_{Q_i^\ell\in\mathcal{F}_\ell}Q_i^\ell,\qquad 1\leq\ell\leq k$,

		\item $\mu(\mathcal{O}_\ell\cap Q_i^{\ell-1})\leq\varepsilon_0\,\mu(Q_i^{\ell-1}),\qquad\forall\,Q_i^{\ell-1}\in\mathcal{F}_{\ell-1},\quad2\leq\ell\leq k$.
	\end{list}

\end{definition}
\medskip

\begin{lemma}\label{lemma-eps-cover}
	If $\{\mathcal{O}_\ell\}_{\ell=1}^k$ is a good $\varepsilon_0$-cover of $F$ with respect to $\mu$ of length $k\in\mathbb{N}$ then
	\begin{equation}\label{iteration-cover}
	\mu(\mathcal{O}_\ell\cap Q_i^m)\leq\varepsilon_0^{\ell-m}\mu(Q_i^m),\qquad\forall\,Q_i^m\in\mathcal{F}_m,\qquad 1\leq m\leq \ell\leq k.
	\end{equation}
\end{lemma}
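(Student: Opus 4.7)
The plan is to prove the claimed geometric decay by induction on the difference $\ell - m$, with property (c) supplying the one-step decay factor $\varepsilon_0$ and the dyadic structure supplying the decomposition that makes the induction propagate.

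First I would handle the base case $\ell = m$, which is immediate since $\mathcal{O}_m \cap Q_i^m \subset Q_i^m$ and $\varepsilon_0^0 = 1$. For the inductive step, suppose \eqref{iteration-cover} holds at level $\ell$ and fix $Q_i^m \in \mathcal{F}_m$. The key observation is that $\mathcal{O}_{\ell+1} \cap Q_i^m$ decomposes nicely. Indeed, from property (a) we have $\mathcal{O}_{\ell+1} \subset \mathcal{O}_\ell \subset \mathcal{O}_m$, and by property (b) the set $\mathcal{O}_m$ is the disjoint union of the cubes in $\mathcal{F}_m$. Because two dyadic cubes in $\mathbb{D}_{Q_0}$ are either disjoint or nested, every $Q_j^\ell \in \mathcal{F}_\ell$ contained in $\mathcal{O}_m$ must sit inside a unique $Q_{i'}^m \in \mathcal{F}_m$. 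Consequently
\[
\mathcal{O}_{\ell+1} \cap Q_i^m \;\subset\; \mathcal{O}_\ell \cap Q_i^m \;=\; \bigsqcup_{\substack{Q_j^\ell \in \mathcal{F}_\ell \\ Q_j^\ell \subset Q_i^m}} Q_j^\ell,
\]
and this disjointness lets me split the measure.

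With this decomposition in hand, I apply property (c) to each $Q_j^\ell \in \mathcal{F}_\ell$ with $Q_j^\ell \subset Q_i^m$ to obtain $\mu(\mathcal{O}_{\ell+1} \cap Q_j^\ell) \leq \varepsilon_0\, \mu(Q_j^\ell)$. Summing over these $Q_j^\ell$ and then invoking the inductive hypothesis at level $\ell$ gives
\[
\mu(\mathcal{O}_{\ell+1}\cap Q_i^m) \;\leq\; \varepsilon_0 \!\!\sum_{\substack{Q_j^\ell \in \mathcal{F}_\ell \\ Q_j^\ell \subset Q_i^m}}\!\! \mu(Q_j^\ell) \;=\; \varepsilon_0\, \mu(\mathcal{O}_\ell \cap Q_i^m) \;\leq\; \varepsilon_0 \cdot \varepsilon_0^{\ell - m}\, \mu(Q_i^m) \;=\; \varepsilon_0^{\ell+1-m}\, \mu(Q_i^m),
\]
completing the induction.

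There is no serious obstacle here; the only point that needs care is justifying that each $Q_j^\ell \in \mathcal{F}_\ell$ meeting $Q_i^m$ is actually contained in $Q_i^m$ (rather than merely overlapping it), which is where the dyadic nesting property from Lemma \ref{dyadiccubes}(b) is used in conjunction with $\mathcal{O}_\ell \subset \mathcal{O}_m$. Once that is in place, the argument is a clean one-line iteration of property (c).
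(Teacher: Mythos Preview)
Your induction (on $\ell$ with $m$ fixed, decomposing $\mathcal{O}_\ell\cap Q_i^m$ into level-$\ell$ cubes) is a close variant of the paper's (which fixes $\ell$ and inducts downward on $m$, decomposing via level-$m$ cubes inside $Q_i^{m-1}$); the overall structure is sound and the mechanism is the same.

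There is, however, a gap in your justification of the key nesting claim. You assert that each $Q_j^\ell\in\mathcal{F}_\ell$ meeting $Q_i^m$ must satisfy $Q_j^\ell\subset Q_i^m$, and you say this follows from dyadic nesting together with $\mathcal{O}_\ell\subset\mathcal{O}_m$. That is not sufficient: nothing in Definition~\ref{defi-cover} prevents, a priori, $Q_i^m\subsetneq Q_j^\ell$, with $Q_j^\ell$ then covered by a union of several cubes from $\mathcal{F}_m$. What actually rules out this bad containment is property~(c), or equivalently your own induction hypothesis. Indeed, assuming $\mu(Q_i^m)>0$ (otherwise the conclusion is trivial), if $Q_i^m\subset Q_j^\ell\subset\mathcal{O}_\ell$ then $\mu(\mathcal{O}_\ell\cap Q_i^m)=\mu(Q_i^m)$, contradicting the already established bound $\mu(\mathcal{O}_\ell\cap Q_i^m)\le\varepsilon_0^{\ell-m}\mu(Q_i^m)$ when $\ell>m$. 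The paper handles the analogous point directly from~(c), which is cleaner there because the two levels involved are adjacent. Once you insert this justification, your argument is complete.
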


\begin{proof}	
	Fix $1\le\ell\le k$ and we proceed by induction in $m$. If $m=\ell$ the estimate is trivial since
	$\mu(\mathcal{O}_\ell\cap Q_i^\ell)=\mu(Q_i^\ell)$. If $m=\ell-1$ (in which case necessarily $\ell\ge 2$) then \eqref{iteration-cover} follows directly from $(c)$ in Definition \ref{defi-cover}.
	Assume next that \eqref{iteration-cover} holds for some fixed $2\le m\le \ell$ and we prove it for $m-1$ in place of $m$. We first claim that for every $Q_i^{m-1}\in \mathcal{F}_{m-1}$ there holds
	\begin{equation}\label{rq4r34f}
	\mathcal{O}_\ell\cap Q_i^{m-1}\subset
	\bigcup_{\substack{Q_j^{m}\in\mathcal{F}_{m}\\Q_j^{m}\subsetneq Q_i^{m-1}}}
	\mathcal{O}_\ell\cap Q_j^{m}.
	\end{equation}
	To see this, take $x\in\mathcal{O}_\ell\cap Q_i^{m-1}\subset \mathcal{O}_m$. Hence, there exists a unique $Q_j^{m}\in\mathcal{F}_{m}$ such that $x\in Q_j^{m}$ and consequently  either $Q_i^{m-1}\subset Q_j^{m}$ or $Q_j^{m}\subsetneq Q_i^{m-1}$.  If $Q_i^{m-1}\subset Q_j^{m}$ then $\mu(Q_i^{m-1})=\mu(\mathcal{O}_{m}\cap Q_i^{m-1})\leq\varepsilon_0\mu(Q_i^{m-1})$, by $(c)$ in Definition \ref{defi-cover}, and this is a contradiction since $0<\varepsilon_0<1$. Thus, $Q_j^{m}\subsetneq Q_i^{m-1}$ and \eqref{rq4r34f} holds. Therefore
	\begin{multline*}
	\mu(\mathcal{O}_\ell\cap Q_i^{m-1})
	\le
	\sum_{\substack{Q_j^{m}\in\mathcal{F}_{m}\\Q_j^{m}\subsetneq Q_i^{m-1}}}
	\mu(\mathcal{O}_\ell\cap Q_j^{m})
	\le
	\varepsilon_0^{\ell-m}
	\sum_{\substack{Q_j^{m}\in\mathcal{F}_{m}\\Q_j^{m}\subsetneq Q_i^{m-1}}}
	\mu(Q_j^m)
	\\
	\le
	\varepsilon_0^{\ell-m}
	\mu(\mathcal{O}_m\cap Q_i^{m-1})
	\le
	\varepsilon_0^{\ell-(m-1)}
	\mu(Q_i^{m-1}),
	\end{multline*}
	where we have applied the induction hypothesis to the $Q_j^{m}$'s and the properties of the good $\varepsilon_0$-cover.
\end{proof}

\begin{lemma}\label{extract-eps-cover}
	Let $E\subset\mathbb{R}^{n+1}$ be an $n$-dimensional $\mathrm{AR}$ set and fix $Q_0\in\mathbb{D}(E)$. Let $\mu$ be a regular Borel measure on $Q_0$ and assume that it is dyadically doubling on $Q_0$, that is,  there exists $C_\mu\ge 1$ such that $\mu(Q^*)\leq C_\mu\mu(Q)$ for every $Q\in\mathbb{D}_{Q_0}\setminus\{Q_0\}$, with $Q^*\supset Q$  and $\ell(Q^*)=2\ell(Q)$ (i.e., $Q^*$ is the ``dyadic parent'' of $Q$).
	For every $0<\varepsilon_0\le e^{-1}$, if $F\subset Q_0$ with $\mu(F)\leq\alpha\mu(Q_0)$ and $0<\alpha\leq\varepsilon_0^2/(2C_\mu^2)$ then $F$ has a good $\varepsilon_0$-cover with respect to $\mu$ of length $k_0=k_0(\alpha,\varepsilon_0)\in\mathbb{N}$, $k_0\geq 2$, which satisfies $
	k_0\,\approx\,\tfrac{\log{\alpha^{-1}}}{\log{\varepsilon_0^{-1}}}.
	$
	In particular, if $\mu(F)=0$, then $F$ has a good $\varepsilon_0$-cover of arbitrary length.
\end{lemma}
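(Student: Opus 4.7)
The plan is to construct the good $\varepsilon_0$-cover by iterating a Calderón-Zygmund stopping-time argument on $\mathbf{1}_F$ at carefully chosen density thresholds. The key idea is that if, at stage $\ell$, one stops at the first time the density of $F$ exceeds a threshold $t_\ell$ slightly larger than the current density in each $Q_i^{\ell-1}$, then the stopped cubes simultaneously capture $F$ (by Lebesgue differentiation) and, by construction, have small total mass inside each $Q_i^{\ell-1}$.

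Concretely, I choose thresholds $t_\ell := (C_\mu/\varepsilon_0)^\ell \alpha$ for $\ell\ge 1$. Let $\mathcal{F}_1$ be the family of \emph{maximal} dyadic cubes $R\in\mathbb{D}_{Q_0}$ with $R\subsetneq Q_0$ and $\mu(F\cap R)/\mu(R)>t_1$. The assumption $\alpha\le \varepsilon_0^2/(2C_\mu^2)$ ensures $t_1<1$ and also $\mu(F)/\mu(Q_0)\le \alpha<t_1$, so $Q_0$ itself is not selected and the stopping is nontrivial. Inductively, given $\mathcal{F}_{\ell-1}$, for each $Q_i^{\ell-1}\in \mathcal{F}_{\ell-1}$, let $\mathcal{F}_\ell\cap\mathbb{D}_{Q_i^{\ell-1}}$ consist of the maximal dyadic cubes $R\subsetneq Q_i^{\ell-1}$ with $\mu(F\cap R)/\mu(R)>t_\ell$. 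Maximality of the previous step and dyadic doubling together yield $\mu(F\cap Q_i^{\ell-1})/\mu(Q_i^{\ell-1})\le C_\mu t_{\ell-1}<t_\ell$ (comparing with the dyadic parent of $Q_i^{\ell-1}$, which was \emph{not} stopped at level $\ell-1$), so the new stopping inside $Q_i^{\ell-1}$ is well-defined and the resulting cubes are proper descendants. Condition (c) then follows immediately from
\begin{equation*}
\mu(\mathcal{O}_\ell\cap Q_i^{\ell-1})<\frac{1}{t_\ell}\,\mu(F\cap Q_i^{\ell-1})\le\frac{C_\mu t_{\ell-1}}{t_\ell}\,\mu(Q_i^{\ell-1})=\varepsilon_0\,\mu(Q_i^{\ell-1}),
\end{equation*}
while the inclusion $F\subset \mathcal{O}_\ell$ modulo a $\mu$-null set follows from the dyadic Lebesgue differentiation theorem for doubling measures (any Lebesgue density point of $F$ is eventually caught by the stopping at any fixed $t_\ell<1$).

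The iteration runs as long as $t_\ell<1$, so the maximal length is $k_0$ equal to the largest integer with $(C_\mu/\varepsilon_0)^{k_0}\alpha<1$. The hypothesis $\alpha\le \varepsilon_0^2/(2C_\mu^2)$ forces $k_0\ge 2$, and, since $\varepsilon_0\le e^{-1}$ gives $\log(C_\mu/\varepsilon_0)\approx \log\varepsilon_0^{-1}$ (with constants depending on $C_\mu$), one obtains $k_0\approx \log\alpha^{-1}/\log\varepsilon_0^{-1}$. To address the literal inclusion $F\subset\mathcal{O}_k$ in condition (a) rather than inclusion modulo null sets, I would use outer regularity of $\mu$ to cover the exceptional null set $N$ by a family of dyadic cubes of total $\mu$-measure smaller than, e.g., $\varepsilon_0\,\min_{i,\ell}\mu(Q_i^{\ell-1})$, and adjoin these to every $\mathcal{F}_\ell$; this preserves condition (c). Finally, when $\mu(F)=0$ the density-based stopping degenerates but the conclusion is essentially free: at each stage, outer regularity produces dyadic covers of $F$ nested inside the previous cover with $\mu$-mass arbitrarily small, so the cover can be iterated indefinitely. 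The main delicate point in the argument is precisely the calibration of the thresholds $t_\ell$: they must grow geometrically by the factor $C_\mu/\varepsilon_0$ so that (i) the density inherited at each selected cube stays strictly below the next threshold (making the next stopping feasible), and simultaneously (ii) the ratio $C_\mu t_{\ell-1}/t_\ell$ equals exactly $\varepsilon_0$ (yielding the key bound in (c)); the ceiling $t_\ell<1$ then governs how long the iteration can be sustained, which is what produces the bound on $k_0$.
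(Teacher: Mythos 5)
Your stopping-time construction and the calibration of the thresholds $t_\ell=(C_\mu/\varepsilon_0)^\ell\alpha$ are essentially the paper's argument: the paper performs a single global Calder\'on--Zygmund stopping on $\mathbf{1}$ of (an enlargement of) $F$ at the geometric thresholds $a^{-k}$, $a=C_\mu/\varepsilon_0$, and then reverses the indexing, which produces the same nested families as your level-by-level stopping; your verification of condition (c) via maximality of the parent plus dyadic doubling, and your count $k_0\approx\log\alpha^{-1}/\log\varepsilon_0^{-1}$, are correct and match the paper.

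The genuine gap is in how you upgrade the a.e.\ containment coming from martingale/Lebesgue differentiation to the literal inclusion $F\subset\mathcal{O}_k$ required by Definition \ref{defi-cover}(a). Your patch --- cover the exceptional $\mu$-null set $N$ by dyadic cubes of total measure at most $\varepsilon_0\min_{i,\ell}\mu(Q_i^{\ell-1})$ and adjoin them to \emph{every} $\mathcal{F}_\ell$ --- fails for two reasons. First, the families $\mathcal{F}_\ell$ are in general countably infinite with $\inf_{i,\ell}\mu(Q_i^{\ell-1})=0$, so the prescribed bound forces the covering cubes to have total measure zero, which outer regularity does not provide. Second, and more fundamentally, if a cube $R$ with $\mu(R)>0$ belongs to both $\mathcal{F}_{\ell-1}$ and $\mathcal{F}_\ell$, then $\mathcal{O}_\ell\cap R=R$ and condition (c) would require $\mu(R)\le\varepsilon_0\mu(R)$ with $\varepsilon_0<1$, which is impossible; and a $\mu$-null set cannot in general be covered by $\mu$-null dyadic cubes. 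The correct fix, which is the one step of the paper's proof your argument is missing, is to use outer regularity \emph{once, at the outset}: replace $F$ by $\widetilde F=U\cap Q_0$ with $U$ relatively open, $F\subset U$ and $\mu(U\setminus F)<\alpha\mu(Q_0)$, and run the stopping on $\mathbf{1}_{\widetilde F}$. Then every $x\in F$ lies in a small dyadic cube entirely contained in $\widetilde F$, so its density is exactly $1$ at small scales and $x$ is caught by every stopping with threshold below $1$ --- no null set ever appears; the price is only the harmless factor $2$ in $\mu(\widetilde F)<2\alpha\mu(Q_0)$, absorbed into the choice of $k_0$. With that modification your argument goes through; the $\mu(F)=0$ case then follows simply by applying the statement with $\alpha$ arbitrarily small, as in the paper.
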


\begin{proof}
	Fix $\varepsilon_0$, $F$ and $\alpha$ as in the statement and write $a:=C_\mu/\varepsilon_0>1$. Note that since $0<\alpha<\varepsilon_0^2/(2C_\mu^2)=a^{-2}/2$
	there is a unique $k_0=k_0(\alpha,\varepsilon_0)\in\mathbb{N}$, $k_0\geq 2$, such that
	\[
	a^{-k_0-1}<\;2\alpha\;\leq\;a^{-k_0},
	\]
	and our choice of $\varepsilon_0$ gives that
	\begin{equation}\label{est-k0}
	\frac{1}{3(1+\log{C_\mu})}\frac{\log{\alpha^{-1}}}{\log{\varepsilon_0^{-1}}}\leq\, k_0\,\leq \frac{\log{\alpha^{-1}}}{\log{\varepsilon_0^{-1}}}.
	\end{equation}
	Since $\mu(F)\leq\alpha\mu(Q_0)$, by outer regularity there exists a relatively open set $U\subset E$ such that $F\subset U$ and  $\mu(U\setminus F)<\alpha\mu(Q_0)$. Set $\widetilde{F}:=U\cap Q_0\subset Q_0$ and 	define the level sets
	\[
	\Omega_k:=\big\{x\in Q_0:\:M_{\mu,Q_0}^d(\mathbf{1}_{\widetilde{F}})(x)>a^{-k} \big\},\qquad 1\leq k\leq k_0,
	\]
	where $M_{\mu,Q_0}^d$ is the local dyadic maximal operator with respect to $\mu$ given by
	$$
	M_{\mu,Q_0}^{d}f(x):=\sup_{x\in Q\in\mathbb{D}_{Q_0}}\frac{1}{\mu(Q)}\int_Q f(y)\,d\mu(y),\qquad f\in L^1_{\rm loc}(Q_0,d\mu).
	$$
	Clearly, $\Omega_1\subset\Omega_2\subset\dots\subset\Omega_{k_0}\subset Q_0$. Moreover, $\widetilde{F}\subset\Omega_1$. To see this fix $x\in \widetilde{F}$ and use that $U$ is relatively open to find $B_x=B(x,r_x)$ with $r_x>0$ so that $B_x\cap E\subset U$. Take next $Q_x\in\mathbb{D}$ with $Q_x\ni x$ so that $\ell(Q_x)
	<\ell(Q_0)$ and $\diam(Q_x)<r_x$. Since $x\in \widetilde{F}\cap Q_x\subset Q_x\cap Q_0$ and $\ell(Q_x)<\ell(Q_0)$ it follows that $Q_x\in\mathbb{D}_{Q_0}$. Also since
	$\diam(Q_x)<r_x$ we easily see that $Q_x\subset B_x\cap E\subset U$ and eventually we have obtained that $Q_x\subset \widetilde{F}$ which in turn gives
	\[
	M_{\mu,Q_0}^d(\mathbf{1}_{\widetilde{F}})(x)\geq\frac{\mu(\widetilde{F}\cap Q_x)}{\mu(Q_x)}\,=\,1\,>\, a^{-1}.
	\]
	Hence, $x\in\Omega_1$ as desired.
	
	All the previous observations show that $F\subset \widetilde{F}\subset \Omega_1\subset\Omega_2\subset\dots\subset\Omega_{k_0}\subset Q_0$ and in particular $\Omega_k\neq \emptyset$ for every $k\ge 1$. Moreover, by our choice of $k_0$, we have that for every $1\le k\le k_0$
	\[
	\mu(\widetilde{F})\leq\mu(U)\leq\mu(U\setminus F)+\mu(F)<2\alpha\mu(Q_0)\leq a^{-k_0}\mu(Q_0)\le a^{-k}\mu(Q_0).
	\]
	Subdividing $Q_0$ dyadically we can then select a pairwise disjoint collection of cubes $\mathcal{F}_{k}=\{Q_i^{k}\}\subset\mathbb{D}_{Q_0}\setminus\{Q_0\}$ which is maximal with respect to the property that
	\begin{equation}\label{propmax}
	\mu(\widetilde{F}\cap Q_i^k)>a^{-k}\mu(Q_i^k),
	\end{equation}
	and also $\Omega_k=\bigcup_{Q_i^k\in\mathcal{F}_k}Q_i^k$ (note that $\mathcal{F}_k\neq\emptyset$ since $\Omega_k\neq\emptyset$). By the maximality of $\mathcal{F}_k$ as well as the dyadic doubling property of $\mu$ we obtain that
	\begin{equation}\label{maximality}
	\frac{\mu(\widetilde{F}\cap Q_i^k)}{\mu(Q_i^k)}\leq C_\mu\,\frac{\mu(\widetilde{F}\cap (Q_i^k)^*)}{\mu((Q_i^k)^*)}\leq C_\mu\,a^{-k},
	\end{equation}
	where $(Q_i^k)^*$ is the dyadic parent of $Q_i^k$.

	Next we claim that for each $Q_j^{k+1}\in\mathcal{F}_{k+1}$ we have that $\mu(\Omega_k\cap Q_j^{k+1})\leq\varepsilon_0\mu(Q_j^{k+1})$. To see this we first observe that if $Q_i^k\cap Q_j^{k+1}\neq\emptyset$,  then necessarily  $Q_i^k\subset Q_j^{k+1}$, for  otherwise $Q_j^{k+1}\subsetneq Q_i^k$ and by the maximality of $\mathcal{F}_{k+1}$ using \eqref{propmax} we would have that $a^{-k}\mu(Q_i^k)<\mu(\widetilde{F}\cap Q_i^k)\leq a^{-k-1}\mu(Q_i^k)$, which leads to a contradiction since $a>1$. Hence, $Q_i^k\subset Q_j^{k+1}$ whenever $Q_i^k\cap Q_j^{k+1}\neq\emptyset$. Using this, \eqref{propmax}, and \eqref{maximality} (for $Q_j^{k+1}$ and $k+1$ replacing $Q_i^k$ and $k$ respectively), we have that
	\begin{multline*}
	\mu(\Omega_k\cap Q_j^{k+1})
	=
	\sum_{Q_i^k:\,Q_i^k\subset Q_j^{k+1}}\mu(Q_i^k\cap Q_j^{k+1})
	=
	\sum_{Q_i^k:\,Q_i^k\subset Q_j^{k+1}}\mu(Q_i^k)
	\\
	< a^k\sum_{Q_i^k:\,Q_i^k\subset Q_j^{k+1}}\mu(\widetilde{F}\cap Q_i^k)\leq a^k\,\mu(\widetilde{F}\cap Q_j^{k+1})
	\leq a^{-1}\,C_\mu\,\mu(Q_j^{k+1})=\varepsilon_0\,\mu(Q_j^{k+1}),
	\end{multline*}
	and this proves the claim.
	
	To complete the proof of the lemma we define $\mathcal{O}_k:=\Omega_{k_0-k+1}$ and note that the sets $\{\mathcal{O}_k\}_{k=1}^{k_0}$ form a good $\varepsilon_0$-cover of $F$, with respect to $\mu$, of length $k_0$ which satisfies \eqref{est-k0}. Finally we observe that if $\mu(F)=0$, then $\alpha$ can be taken arbitrarily small, hence $k_0$, the length of the good $\varepsilon_0$-cover of $F$, can be taken as large as desired by \eqref{est-k0}.
\end{proof}
\bigskip

Given  $Q_0\in\mathbb{D}(\partial\Omega)$ and for every $\eta\in(0,1)$ we define the modified non-tangential cone
\begin{equation}\label{def-whitney-eta}
\Gamma_{Q_0}^{\eta}(x):=\bigcup_{\shortstack{$\scriptstyle Q\in\mathbb{D}_{Q_0}$\\$\scriptstyle Q\ni x$}}U_{Q,\eta^3},\qquad U_{Q,\eta^3}=\bigcup_{\shortstack{$\scriptstyle Q'\in\mathbb{D}_{Q}$\\$\scriptstyle \ell(Q')>\eta^3\ell(Q)$}}U_{Q'}.
\end{equation}
As already noted in Section 2, the sets $\{U_{Q,\eta^3}\}_{Q\in\mathbb{D}_{Q_0}}$ have bounded overlap with constant depending on $\eta$.

\begin{lemma}\label{sq-function->M}
	There exist $0<\eta\ll 1$, depending  only on dimension, the $1$-sided $\mathrm{CAD}$ constants and the ellipticity of $L$, and
	$\alpha_0\in (0,1)$, $C_\eta\ge 1$ both depending on the same parameters and additionally on $\eta$, such that for every $Q_0\in\mathbb{D}$, for every $0<\alpha<\alpha_0$, and for every Borel set $F\subset Q_0$ satisfying $\omega_L^{X_{Q_0}}(F)\le\alpha \omega_L^{X_{Q_0}}(Q_0)$, there exists a Borel set $S\subset Q_0$ such that the bounded weak solution $u(X)=\omega^X_L(S)$ satisfies
	\begin{equation}\label{def-mod-sq-function}
	S_{Q_0}^{\eta}u(x):=\bigg(\iint_{\Gamma_{Q_0}^{\eta}(x)}|\nabla u(Y)|^2\delta(Y)^{1-n}\,dY\bigg)^{1/2}
	\ge
	C_\eta^{-1} \big(\log{\alpha^{-1}}\big)^{\frac12}
	,\qquad \forall\,x\in F,
	\end{equation}
\end{lemma}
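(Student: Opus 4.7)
Following the strategy of Kenig--Kirchheim--Pipher--Toro, the idea is to build, from a good $\varepsilon_0$-cover of $F$, a single bounded weak solution $u$ of $Lu=0$ whose square function $S_{Q_0}^{\eta}u$ has a lower bound on all of $F$ of order $(\log \alpha^{-1})^{1/2}$. By Lemma \ref{proppde}(c) together with Harnack's inequality and the AR property of $\partial\Omega$, the measure $\omega_L^{X_{Q_0}}$ restricted to $Q_0$ is dyadically doubling. Choose $\varepsilon_0 \in (0, e^{-1}]$ sufficiently small (to be fixed later depending only on the allowable parameters) and let $\alpha_0$ be the threshold supplied by Lemma \ref{extract-eps-cover} for this $\varepsilon_0$. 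If $\omega_L^{X_{Q_0}}(F) \le \alpha\, \omega_L^{X_{Q_0}}(Q_0)$ with $\alpha < \alpha_0$, then Lemma \ref{extract-eps-cover} produces a good $\varepsilon_0$-cover $\{\mathcal{O}_\ell\}_{\ell=1}^{k_0}$ of $F$ with families $\mathcal{F}_\ell = \{Q_i^\ell\} \subset \mathbb{D}_{Q_0}$ and length $k_0 \gtrsim \log \alpha^{-1}$. Define
\[
S := \bigcup_{\ell \text{ odd}} \big(\mathcal{O}_\ell \setminus \mathcal{O}_{\ell+1}\big) \subset Q_0, \qquad u(X) := \omega_L^X(S),
\]
which is a bounded weak solution of $Lu=0$ with $0 \le u \le 1$.

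Fix $x \in F$. For each $\ell \in \{1,\dots,k_0\}$, let $Q^\ell(x) \in \mathcal{F}_\ell$ be the unique cube at level $\ell$ containing $x$; these form a nested tower $Q^{k_0}(x) \subsetneq \cdots \subsetneq Q^1(x)$. The key claim is that the oscillation of $u$ on the Whitney region $U_{Q^\ell(x),\eta^3}$ is bounded below by a constant $c=c(\varepsilon_0)>0$ independent of $\ell$ and $x$. The parity choice of $S$ is engineered so that, combining the iterated good-cover estimate of Lemma \ref{lemma-eps-cover} with the change-of-pole bounds of Lemma \ref{proppde}(d), the relative $\omega_L^{X_{Q^\ell(x)}}$-mass of $S \cap Q^\ell(x)$ inside $Q^\ell(x)$ flips between $\ge 1 - C\varepsilon_0$ when $\ell$ is odd and $\le C\varepsilon_0$ when $\ell$ is even. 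Using the Hölder boundary decay (Lemma \ref{proppde}(a)) applied to $\omega_L^X(S \setminus Q^\ell(x))$ --- a solution vanishing on $Q^\ell(x)$ --- together with a boundary Harnack comparison to separate this term from $\omega_L^X(S\cap Q^\ell(x))$, this density flip is transferred to an oscillation of $u$ between the corkscrews $X_{Q^\ell(x)}$ and $X_{Q^{\ell+1}(x)}$, both of which lie inside $U_{Q^\ell(x),\eta^3}$ once $\eta$ is fixed small enough in terms of the allowable parameters.

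Once the oscillation bound is in hand, a standard Harnack-chain plus Caccioppoli-type argument converts oscillation into gradient energy:
\[
\iint_{U_{Q^\ell(x),\eta^3}} |\nabla u(Y)|^2 \delta(Y)^{1-n} \, dY \;\gtrsim\; c^2,
\]
uniformly in $\ell$. Since the regions $\{U_{Q^\ell(x),\eta^3}\}_{\ell=1}^{k_0}$ all lie inside $\Gamma_{Q_0}^{\eta}(x)$ and have bounded overlap (by the observation recorded in Section \ref{section:Prelim} just after the definition of $U_{Q,\varepsilon}$), summing over $\ell$ yields
\[
\big(S_{Q_0}^{\eta}u(x)\big)^2 \;\gtrsim\; k_0 \;\gtrsim\; \log \alpha^{-1},
\]
as required.

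The main obstacle is the oscillation step. The cover $\{\mathcal{O}_\ell\}$ is purely combinatorial, so consecutive cubes $Q^\ell(x)$ and $Q^{\ell+1}(x)$ in the tower need not differ substantially in sidelength; thus a naive scale-by-scale localization of $u$ cannot work. The alternating-shell definition of $S$ is precisely what forces a definite density flip at every combinatorial level, and the boundary-comparison estimates convert that flip into an honest oscillation of the single fixed solution $u$. Making all the constants uniform in $\ell$ and in $x \in F$ is the most delicate bookkeeping in the argument.
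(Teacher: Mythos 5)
Your overall architecture (good $\varepsilon_0$-cover, a set $S$ built from shells of the cover, an oscillation estimate per level converted to gradient energy by Poincar\'e/Caccioppoli, and a bounded-overlap summation over $\approx\log\alpha^{-1}$ levels) matches the paper's. But the crux --- the oscillation step --- has a genuine gap, and the two specific claims you make there do not hold as stated. First, you assert that $X_{Q^{\ell}(x)}$ and $X_{Q^{\ell+1}(x)}$ both lie in $U_{Q^{\ell}(x),\eta^3}$ ``once $\eta$ is fixed small enough.'' This is false in general: $\eta$ must be fixed before the cover is built, while the cover is purely combinatorial, so $\ell(Q^{\ell+1}(x))$ can be arbitrarily much smaller than $\ell(Q^{\ell}(x))$; in that case $X_{Q^{\ell+1}(x)}$ sits at depth $\approx\ell(Q^{\ell+1}(x))<\eta^3\ell(Q^{\ell}(x))$ and falls outside $U_{Q^{\ell}(x),\eta^3}$. (The opposite failure mode also occurs: the two cubes may differ by only a factor of $2$, in which case the two corkscrews are Harnack-equivalent and no definite oscillation between them can be extracted.) Second, to show $u(X_{Q^{\ell}(x)})$ is small at the ``empty'' levels you must control $\omega_L^{X_{Q^{\ell}(x)}}(S\setminus Q^{\ell}(x))$, and the H\"older boundary decay of Lemma \ref{proppde}(a) gives nothing at the point $X_{Q^{\ell}(x)}$ itself, since $|X_{Q^{\ell}(x)}-x_{Q^{\ell}}|\approx\ell(Q^{\ell})$ and the bound degenerates to a constant. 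The decay is only useful at a point whose depth is a small fraction $\eta$ of $\ell(Q^{\ell})$ near the center of $Q^{\ell}$.

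This is exactly why the paper's construction differs from yours in two essential ways. It does not use a parity/alternating choice of shells; instead it shrinks the cover sets, setting $S=\bigcup_{j}(\widetilde{\mathcal O}_{j-1}\setminus\mathcal O_j)$ where $\widetilde{\mathcal O}_{j-1}$ is the union of the subcubes $\widetilde Q$ of sidelength $\eta\ell(Q)$ containing the centers $x_Q$, $Q\in\mathcal F_{j-1}$. And it never compares corkscrews of two consecutive combinatorial levels: within a single level $\ell$ it compares $u$ at $X_{\widetilde Q_i^{\ell}(y)}$ (depth $\approx\eta\ell(Q_i^{\ell})$) and at $X_{\widetilde P_i^{\ell}(y)}$ (depth $\approx\eta^2\ell(Q_i^{\ell})$, where $P_i^{\ell}(y)$ is the $\eta$-scale cube containing $y$), two points that genuinely live in $U_{Q_i^{\ell},\eta^3}$ by construction. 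The oscillation $\ge c_0/2$ then comes from the dichotomy $P_i^{\ell}\cap\widetilde Q_i^{\ell}=\emptyset$ versus $P_i^{\ell}=\widetilde Q_i^{\ell}$, using the estimate $\omega_L^{X_{\widetilde Q}}(\partial\Omega\setminus Q)\le C\eta^{\gamma}$ (which is where the H\"older decay actually enters, at the correct scale) and $c_0\le\omega_L^{X_{\widetilde Q}}(\widetilde Q)\le 1-c_0$. Your density-flip computation for $S\cap Q^{\ell}(x)$ relative to $\omega_L^{X_{Q^{\ell}(x)}}$ is correct as far as it goes, but without the shrinking and the two-geometric-scales-per-level device it cannot be upgraded to a pointwise oscillation of $u$ at points lying in a single bounded-overlap family of Whitney regions, so the proof as proposed does not close.
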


Assuming this result momentarily, we can now prove Theorem \ref{theor:cme-implies-ainf}.

\begin{proof}[Proof of Proof of Theorem \ref{theor:cme-implies-ainf}: $(a)\Longrightarrow (b)$]
	Our first goal is to see that given $\beta\in (0,1)$ there exists $\alpha\in (0,1)$ so that for every $Q_0\in\mathbb{D}$ and
	every Borel set $F\subset Q_0$, we have that
	\begin{equation}\label{goal:Ainfty}
	\frac{\omega_L^{X_{Q_0}}(F)}{\omega_L^{X_{Q_0}}(Q_0)}\le \alpha
	\qquad\implies\qquad
	\frac{\sigma(F)}{\sigma(Q_0)}\le\beta.
	\end{equation}
	Fix $\beta\in (0,1)$ and $Q_0\in\mathbb{D}$, and take a Borel set $F\subset Q_0$ so that
	${\omega_L^{X_{Q_0}}(F)}\le \alpha\omega_L^{X_{Q_0}}(Q_0) $ where $\alpha\in (0,1)$ is to be chosen. Applying Lemma \ref{sq-function->M}, if we assume that $0<\alpha<\alpha_0$, then  $u(X)=\omega^X_L(S)$ satisfies \eqref{def-mod-sq-function} and therefore
	\begin{multline}\label{endproof1}
	C_\eta^{-2} \log{\alpha^{-1}}\sigma(F)
	\leq
	\int_F S_{Q_0}^\eta u(x)^2\,d\sigma(x)
	\\
	\leq
	\int_{Q_0}\bigg(\iint_{\Gamma_{Q_0}^\eta(x)}|\nabla u(Y)|^2\delta(Y)^{1-n}\,dY\bigg)\,d\sigma(x)
	\\
	=\iint_{B_{Q_0}^*\cap\Omega}|\nabla u(Y)|^2\delta(Y)^{1-n}\bigg(\int_{Q_0}\mathbf{1}_{\Gamma_{Q_0}^\eta(x)}(Y)\,d\sigma(x)\bigg)\,dY
	\end{multline}
	where we have used that $\Gamma_{Q_0}^\eta(x)\subset T_{Q_0}\subset B_{Q_0}^*\cap\Omega$ (see \eqref{definicionkappa12}), and Fubini's theorem. To estimate the inner integral we fix $Y\in B_{Q_0}^*\cap\Omega$ and $\widehat{y}\in\mathbb{D}(\partial\Omega)$ such that $|Y-\widehat{y}|=\delta(Y)$. We claim that
	\begin{equation}\label{claimendproof}
	\big\{x\in Q_0:\:Y\in\Gamma_{Q_0}^\eta(x)\big\}\subset\Delta(\widehat{y},C\eta^{-3}\delta(Y)).
	\end{equation}
	To show this let $x\in Q_0$ be such that $Y\in\Gamma_{Q_0}^\eta(x)$. Then there exists $Q\in\mathbb{D}_{Q_0}$ such that $x\in Q$ and $Y\in U_{Q,\eta^3}$. Hence, there is $Q'\in\mathbb{D}_Q$ with $\ell(Q')>\eta^3\ell(Q)$ such that $Y\in U_{Q'}$ and consequently $\delta(Y)\approx \dist(Y,Q')\approx \ell(Q')$. Then,
	$$
	|x-\widehat{y}|\leq\diam(Q)+\dist(Y,Q')+\delta(Y)
	\lesssim\ell(Q)+\delta(Y)\leq C\eta^{-3}\delta(Y),
	$$
	thus $x\in\Delta(\widehat{y},C\eta^{-3}\delta(Y))$ as desired. If we now use \eqref{claimendproof} and the $\mathrm{AR}$ property  we conclude that for every $Y\in B_{Q_0}^*\cap\Omega$
	\[
	\int_{Q_0}\mathbf{1}_{\Gamma_{Q_0}^\eta(x)}(Y)\,d\sigma(x)
	\le
	\sigma(\Delta(\widehat{y},C\eta^{-3}\delta(Y)))
	\lesssim
	\eta^{-3n}\delta(Y)^n.
	\]
	Plugging this into \eqref{endproof1} and using \eqref{cmeestimate}, since  $u\in W^{1,2}_{\rm loc}(\Omega)\cap L^\infty(\Omega)$ with $Lu=0$ in the weak sense in $\Omega$,
	we obtain
	\[
	C_\eta^{-2} \log{\alpha^{-1}}\sigma(F)
	\lesssim\eta^{-3n}\iint_{B_{Q_0}^*\cap\Omega}|\nabla u(Y)|^2\delta(Y)\,dY
	\lesssim
	\eta^{-3n}\sigma(\Delta_{Q_0}^*)
	\leq C\eta^{-3n}\sigma(Q_0),
	\]
	where we have used that $\Delta_{Q_0}^*=B_{Q_0}^*\cap\partial\Omega$, that $0\le u(X)\le \omega^X(\partial\Omega)\le 1$ and  that $\partial \Omega$ is AR.  Rearranging the terms we see that $\sigma(F)/\sigma(Q_0)\le \beta$ provided $0<\alpha<\min\{\alpha_0,e^{-C C_\eta^2\eta^{-3n}\beta^{-1}}\}$ and \eqref{goal:Ainfty} follows.

	Next we see that \eqref{goal:Ainfty} implies that $\omega_{L}\in A_\infty(\partial\Omega)$. To see this we first obtain a dyadic-$A_\infty$ condition. Fix $Q^0, Q_0\in\mathbb{D}$ with $Q_0\subset Q^0$. Lemma \ref{proppde} parts $(c)$ and $(d)$, Harnack's inequality and Lemma \ref{bourgain} gives for every $F\subset Q_0$
	\begin{equation}\label{cop-dyadic}
	\frac{1}{C_1}\frac{\omega_L^{X_{Q_0}}(F)}{\omega_L^{X_{Q_0}}(Q_0)}
	\leq
	\frac{\omega_L^{X_{Q^0}}(F)}{\omega_L^{X_{Q^0}}(Q_0)}
	\leq
	C_1\frac{\omega_L^{X_{Q_0}}(F)}{\omega_L^{X_{Q_0}}(Q_0)}.
	\end{equation}

	With all these in hand we fix $\beta\in (0,1)$ and take the corresponding $\alpha\in (0,1)$ so that \eqref{goal:Ainfty} holds.
	We are going to see that
	\begin{equation}\label{Ainfty-daydic}
	\frac{\omega_L^{X_{Q^0}}(F)}{\omega_L^{X_{Q^0}}(Q_0)}\le \frac{\alpha}{C_1}
	\implies
	\frac{\sigma(F)}{\sigma(Q_0)}\le\beta.
	\end{equation}
	Assuming that the first estimate holds  we see that \eqref{cop-dyadic} yields
	$\frac{\omega_L^{X_{Q_0}}(F)}{\omega_L^{X_{Q_0}}(Q_0)}\le \alpha$. Thus we can apply \eqref{goal:Ainfty} to obtain that $\frac{\sigma(F)}{\sigma(Q_0)}\le\beta$ as desired.  
	
To complete the proof we need to see that \eqref{Ainfty-daydic} gives \eqref{Ainfty-alpha-beta}. We show its contrapositive. Fix $\beta\in (0,1)$ and  a surface ball $\Delta_0=B_0\cap\partial\Omega$, with $B_0=B(x_0,r_0)$, $x_0\in\partial\Omega$, and $0<r_0<\diam(\partial\Omega)$. Take an arbitrary surface ball $\Delta=B\cap\partial\Omega$ centered at $\partial\Omega$ with $B=B(x,r)\subset B_0$, and let $F\subset\Delta$ be a Borel set such that 
${\sigma(F)}>\beta{\sigma(\Delta)}$. Consider the pairwise disjoint family $\mathcal{F}=\{Q\in \mathbb{D}: Q\cap \Delta\neq\emptyset, \frac{r}{4\,C}<\ell(Q)\le \frac{r}{2\,C}\}$ where $C$ is the constant in \eqref{deltaQ}. In particular,  $\Delta\subset \cup_{\mathcal{F}} Q\subset 2\Delta$. The pigeon-hole principle yields that there is a constant $C'>1$ depending just on the Ahlfors regularity constant of $\sigma$ so that $\frac{\sigma(F\cap Q_0)}{\sigma(Q_0)}>\frac{\beta}{C'}$ for some $Q_0\in\mathcal{F}$. Let $Q^0\in\mathbb{D}$ be the unique dyadic cube such that $Q_0\subset Q^0$ and  $\frac{r_0}{2}<\ell(Q^0)\le r_0$. We can then invoke \eqref{Ainfty-daydic} with $\frac{\beta}{C'}$   to find $\alpha\in (0,1)$ such that by Lemma \ref{proppde}, and Harnack's inequality
\[
\frac{\omega_L^{X_{\Delta_0}}(F)}{\omega_L^{X_{\Delta_0}}(\Delta)}
\ge
\frac{\omega_L^{X_{\Delta_0}}(F\cap Q_0)}{\omega_L^{X_{\Delta_0}}(\Delta)}
\approx
\frac{\omega_L^{X_{\Delta_0}}(F\cap Q_0)}{\omega_L^{X_{\Delta_0}}(Q_0)}
\approx
\frac{\omega_L^{X_{Q^0}}(F\cap Q_0)}{\omega_L^{X_{Q^0}}(Q_0)}
>\frac{\alpha}{C_1}.
\]
In short, we have obtained that for every $\beta\in (0,1)$ there exists $\widetilde{\alpha}\in (0,1)$ such that
\[
\frac{\sigma(F)}{\sigma(\Delta)}>\beta
\implies
\frac{\omega_L^{X_{\Delta_0}}(F)}{\omega_L^{X_{\Delta_0}}(\Delta)}>\widetilde{\alpha},
\]
which is the contrapositive of \eqref{Ainfty-alpha-beta}. This completes the proof of Theorem \ref{theor:cme-implies-ainf} modulo the proof of Lemma \ref{sq-function->M}.
 \end{proof}

Before proving Lemma \ref{sq-function->M} we need some notation and some estimates. Let $\eta=2^{-k_*}<1$. 
\begin{eqnarray}\label{tt-1}
&\hbox{Given }Q\in\mathbb{D}(\partial\Omega)\hbox{ we define }\widetilde{Q}\in\mathbb{D}_Q\hbox{ to be the unique cube }\nonumber\\ 
&\qquad\hbox{such that }x_Q\in\widetilde{Q},\hbox{ and }\ell(\widetilde{Q})=\eta\ell(Q).
\end{eqnarray}
Using this notation we have the following estimates which will be used later:
\begin{equation}\label{est-HC-daydic}
\omega_L^{X_{\widetilde{Q}}}(\partial\Omega\setminus Q)
=
\omega_L^{X_{\widetilde{Q}}}(\partial\Omega)-\omega_L^{X_{\widetilde{Q}}}(Q)
\le
1-\omega_L^{X_{\widetilde{Q}}}(Q)
\le C\eta^\gamma
\end{equation}
where $C$ depends on dimension, the $1$-sided $\mathrm{CAD}$ constants and the ellipticity of $L$ and $\gamma$ is the parameter in Lemma \ref{proppde}. To see this, keeping in mind the notation introduced in \eqref{deltaQ}, let $\varphi(X)=\varphi_0((X-x_Q)/r_Q)$ where $\varphi_0\in C_c(\re^{n+1})$ with $\mathbf{1}_{B(0,1)}\le\varphi_0\le \mathbf{1}_{B(0,2)}$. Note that $\varphi\in C_c(\re^{n+1})$ with $0\le \varphi\le 1$, $\supp(\varphi)\subset 2B_Q$,  and $\varphi\equiv 1$ in $B_Q$. In particular, $\varphi\rest{\partial\Omega}\le \mathbf{1}_{2\Delta_Q}\le \mathbf{1}_{Q}$
and hence
\begin{equation}\label{qweaffer}
v(X):=
\int_{\partial\Omega} \varphi(y)d\omega_L^{X_{\widetilde{Q}}}(y)
\le
\omega_L^{X_{\widetilde{Q}}}(Q)
\end{equation}
Note that $v\in W^{1,2}_{\rm loc}(\Omega)\cap C(\overline{\Omega})$ is a weak solution with $0\le v\le 1$ and $v\rest{\partial\Omega}=\varphi\rest{\partial\Omega}\equiv 1$  in $B_Q$. Thus, $\widetilde{v}=1-v\in W^{1,2}_{\rm loc}(\Omega)\cap C(\overline{\Omega})$ is a weak solution with $0\le \widetilde{v} \le 1$ and $\widetilde{v}\rest{\partial\Omega}=1-\varphi\rest{\partial\Omega}\equiv 0$  in $B_Q$. Thus we can use \eqref{qweaffer} and  part $(a)$ in Lemma \ref{proppde} to see that
\begin{equation}
1-\omega_L^{X_{\widetilde{Q}}}(Q)
\le
1-v(X)
=
\widetilde{v}(X)
\lesssim
\left(\frac{|X_{\widetilde{Q}}-x_Q|}{r_Q}\right)^\gamma \|\widetilde{v}\|_{L^\infty(\Omega)}
\le
C\eta^\gamma,
\end{equation}
where the last estimate follows from
\[
|X_{\widetilde{Q}}-x_Q|
\le
|X_{\widetilde{Q}}-x_{\widetilde{Q}}|+|x_{\widetilde{Q}}-x_Q|
\lesssim
\ell(\widetilde{Q})
=\eta \ell(Q),
\]
since $x_Q\in \widetilde{Q}$ and $X_{\widetilde{Q}}$ is a corkscrew point relative to $\widetilde{Q}$.

We also claim that there exists $c_0\in (0,1)$ depending only on the $\mathrm{AR}$ constant and on the ellipticity of $L$ so that if $\eta$ is small enough (depending only on $n$ and the AR constant) then
\begin{equation}\label{c0c1}
c_0\le \omega_L^{X_{\widetilde{Q}}}(\widetilde{Q})\le 1-c_0.
\end{equation}
The first inequality follows at once from Lemma \ref{bourgain} and Harnack's inequality. For the second one we claim that if $\eta$ is small enough we can find $\widetilde{Q}'\in\mathbb{D}$ with $\ell(\widetilde{Q}')=\ell(\widetilde{Q})$, $\widetilde{Q}'\cap\widetilde{Q}=\emptyset$ and $\dist(\widetilde{Q},\widetilde{Q}')\lesssim\ell(\widetilde{Q})$. Indeed, if we write $\widetilde{Q}^j$ for the $j$-th ancestor of $\widetilde{Q}$ (that is, the unique cube satisfying $\ell( \widetilde{Q}^j)=2^j \ell(\widetilde{Q})$ and $\widetilde{Q}\subset \widetilde{Q}^j$) then
$\sigma(\widetilde{Q}^j)\gtrsim \ell(\widetilde{Q}^j)^n= 2^{jn}\ell(\widetilde{Q})^n >\sigma(\widetilde{Q})$ for $j$ large enough depending on the  $\mathrm{AR}$ constant. Note that in the previous estimates we are implicitly using that $\ell(\widetilde{Q})\lesssim\diam(\partial\Omega)$, fact that follows by choosing $\eta$ small enough depending on the  $\mathrm{AR}$ constant. Once $j$ has been chosen we must have $\widetilde{Q}\subsetneq\widetilde{Q}^j$, and we can easily pick $\widetilde{Q}'\in\mathbb{D}_{\widetilde{Q}^j}$ with all the desired properties. In turn by Harnack's inequality and Lemma \ref{bourgain} one can see that $\omega^{X_{\widetilde{Q}}}(\widetilde{Q}')\gtrsim\omega^{X_{\widetilde{Q}'}}(\widetilde{Q}')\ge C^{-1}$ with $C>1$ and consequently
\[
\omega_L^{X_{\widetilde{Q}}}(\widetilde{Q})
=
\omega_L^{X_{\widetilde{Q}}}(\partial\Omega)-\omega_L^{X_{\widetilde{Q}}}(\partial\Omega\setminus\widetilde{Q})
\le
1-\omega_L^{X_{\widetilde{Q}}}(\widetilde{Q}')
\le
1-C^{-1},
\]
which is the desired estimate.

\begin{proof}[Proof of Lemma \ref{sq-function->M}]
	Let $\eta=2^{-k_*}<1$ be a small dyadic number to be chosen and such that \eqref{est-HC-daydic} and \eqref{c0c1} hold. Fix $Q_0\in\mathbb{D}$ and note that $\omega:=\omega_L^{X_{Q_0}}$ is a regular Borel measure on $\partial \Omega$ which is dyadically doubling with constants $C_0$ (depending only on dimension, the $1$-sided $\mathrm{CAD}$ constants and the ellipticity of $L$) by part $(c)$ of Lemma \ref{proppde} and Harnack's inequality.
	Let $0<\varepsilon_0<e^{-1}$ and $0<\alpha< \varepsilon_0^2/(2C_0^2)$, sufficiently small to be chosen later, and let $F\subset Q_0$ be a Borel set such that $\omega(F)\leq\alpha\omega(Q_0)$. By
	Lemma \ref{extract-eps-cover} applied to $\mu=\omega$, it follows that $F$ has a good $\varepsilon_0$-cover of length $k\approx\tfrac{\log{\alpha^{-1}}}{\log{\varepsilon_0^{-1}}}$, with $k\geq 2$. Let $\{\mathcal{O}_\ell\}_{\ell=1}^{k}$ be the corresponding collection of Borel sets so that $F\subset \mathcal{O}_k\subset\cdots\subset\mathcal{O}_1\subset Q_0$ and $\mathcal{O}_\ell=\bigcup_{Q_i^\ell\in\mathcal{F}_\ell}Q_i^{\ell}$, with disjoint families $\mathcal{F}_\ell=\{Q_i^\ell\}\subset\mathbb{D}_{Q_0}\setminus\{Q_0\}$. Now, using the notation above (see \eqref{tt-1}) we define $\widetilde{\mathcal{O}}_\ell:=\bigcup_{Q_i^\ell\in\mathcal{F}_\ell}\widetilde{Q}_i^\ell$ and consider the Borel set $S:=\bigcup_{j=2}^k\big(\widetilde{\mathcal{O}}_{j-1}\setminus\mathcal{O}_j\big)$.
	Note that the union of sets comprising $S$ is disjoint, hence
	\begin{equation}\label{def-G}
	\mathbf{1}_S(y)=\sum_{j=2}^k\mathbf{1}_{\widetilde{\mathcal{O}}_{j-1}\setminus\mathcal{O}_j}(y),\qquad y\in\partial\Omega.
	\end{equation}

	Now we introduce some notation. For each $y\in F$ and $1\leq\ell\leq k$, there exists a unique $Q_i^\ell(y)\in\mathcal{F}_\ell$ such that $y\in Q_i^{\ell}(y)$. Let $P_i^\ell(y)\in\mathbb{D}_{Q_i^\ell(y)}$ be the unique cube verifying  $y\in P_i^\ell(y)$ and $\ell(P_i^\ell(y))=\eta\ell(Q_i^\ell(y))$. Associated with $P_i^\ell(y)$ we can construct $\widetilde{P}_i^\ell(y)$ as above (see \eqref{tt-1}), that is, $\widetilde{P}_i^\ell(y)\in\mathbb{D}_{P_i^\ell(y)}$ satisfies $\ell(\widetilde{P}_i^{\ell}(y))=\eta\ell(P_i^\ell(y))$ and $x_{P_i^{\ell}(y)}\in \widetilde{P}_i^{\ell}(y)$, where $x_{P_i^\ell(y)}$ is the center of $P_i^\ell(y)$. As usual we write $X_{\widetilde{Q}_i^\ell(y)}$ and $X_{\widetilde{P}_i^\ell(y)}$ to denote, respectively, the corkscrew points associated to $\widetilde{Q}_i^\ell(y)$ and $\widetilde{P}_i^\ell(y)$.

	Let $u(X):=\omega^X_L(S)$ then
	\begin{equation}\label{def-u}
	u(X)=\int_{\partial\Omega}\mathbf{1}_S(y)\,d\omega^X_L(y)=\sum_{j=2}^k\omega^X_L(\widetilde{\mathcal{O}}_{j-1}\setminus\mathcal{O}_j).
	\end{equation}
	The following lemma contains a lower bound for the oscillation of $u$. Here $\eta$ is as in \eqref{tt-1} and $F$ which was used to construct $S$ (as above) has a good $\varepsilon_0$-cover.

	\begin{lemma}\label{claim2}
		If $\eta$ and $\varepsilon_0$ are taken sufficiently small (depending only on $n$, the $1$-sided $\mathrm{CAD}$ constants and the ellipticity of $L$), then for each $y\in F$, and each $1\leq\ell\leq k-1$, we have that
		\begin{equation}\label{oscillation}
		\big|u(X_{\widetilde{Q}_i^\ell(y)})-u(X_{\widetilde{P}_i^\ell(y)})\big|\geq \frac{c_0}2,
		\end{equation}
		where $c_0$ is the constant in  \eqref{c0c1}
	\end{lemma}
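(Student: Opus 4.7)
The strategy is to decompose $u(X)=\omega_L^X(S)=\sum_{j=2}^{k}\omega_L^X(E_j)$ with $E_j:=\widetilde{\mathcal{O}}_{j-1}\setminus\mathcal{O}_j$, compute $u(X_{\widetilde{Q}_i^\ell(y)})-u(X_{\widetilde{P}_i^\ell(y)})$ level by level, and show that the single level $j=\ell+1$ produces the main oscillation while every other level contributes at most an error of order $C\eta^\gamma+C_\eta\varepsilon_0$, where $C_\eta=C_0^{\,k_*}$ (or $C_0^{\,2k_*}$) absorbs the dyadic doubling of $\omega=\omega_L^{X_{Q_0}}$ across the $k_*$ subdivisions separating $Q:=Q_i^\ell(y)$ from $\widetilde{Q}:=\widetilde{Q}_i^\ell(y)$ (and $2k_*$ for $\widetilde{P}:=\widetilde{P}_i^\ell(y)$). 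This error is dwarfed by $c_0/2$ once $\eta$ is fixed small and then $\varepsilon_0\ll C_\eta^{-1}$.

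For $j\le\ell$, the inclusion $y\in F\subset\mathcal{O}_j$ together with the nesting $Q\subset Q_i^j(y)$ forces $Q\subset\mathcal{O}_j$, hence $E_j\cap Q=\emptyset$; thus $\sum_{j\le\ell}\omega_L^{X_{\widetilde{Q}}}(E_j)\le\omega_L^{X_{\widetilde{Q}}}(\partial\Omega\setminus Q)\le C\eta^\gamma$ by \eqref{est-HC-daydic}, and the corresponding bound at $X_{\widetilde{P}}$ follows from \eqref{est-HC-daydic} applied at $P:=P_i^\ell(y)$, since $\partial\Omega\setminus Q\subset\partial\Omega\setminus P$. For $j\ge\ell+2$ one has $E_j\subset\mathcal{O}_{j-1}\subset\mathcal{O}_{\ell+1}$, so it suffices to bound $\omega_L^{\cdot}(\mathcal{O}_{\ell+1})$. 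The pieces outside $Q$ (resp.\ $P$) are again controlled by \eqref{est-HC-daydic}; the pieces inside $\widetilde{Q}$ (resp.\ $\widetilde{P}$) are bounded by $C_\eta\varepsilon_0$, by invoking the change-of-pole formula of Lemma~\ref{proppde}(d) composed from $X_{Q_0}$ through $X_Q$ down to $X_{\widetilde{Q}}$ (resp.\ $X_{\widetilde{P}}$), the $\varepsilon_0$-cover hypothesis $\omega(\mathcal{O}_{\ell+1}\cap Q)\le\varepsilon_0\omega(Q)$, and the dyadic doubling of $\omega$.

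The heart of the proof is $j=\ell+1$. Because $Q\in\mathcal{F}_\ell$ and the family $\mathcal{F}_\ell$ is pairwise disjoint, $\widetilde{\mathcal{O}}_\ell\cap Q=\widetilde{Q}$, whence $E_{\ell+1}\cap Q=\widetilde{Q}\setminus\mathcal{O}_{\ell+1}$. From $X_{\widetilde{Q}}$, combining the lower bound $\omega_L^{X_{\widetilde{Q}}}(\widetilde{Q})\ge c_0$ in \eqref{c0c1} with the previous change-of-pole bound $\omega_L^{X_{\widetilde{Q}}}(\widetilde{Q}\cap\mathcal{O}_{\ell+1})\le C_\eta\varepsilon_0$ yields $\omega_L^{X_{\widetilde{Q}}}(E_{\ell+1})\ge c_0-C_\eta\varepsilon_0$. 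From $X_{\widetilde{P}}$, in the generic case $\widetilde{Q}\cap P=\emptyset$, the set $E_{\ell+1}\cap Q\subset\widetilde{Q}$ lies in $\partial\Omega\setminus P$, so \eqref{est-HC-daydic} at $P$ gives $\omega_L^{X_{\widetilde{P}}}(E_{\ell+1})\le C\eta^\gamma$. Summing the three groups yields
\[
u(X_{\widetilde{Q}})-u(X_{\widetilde{P}})\ge c_0-C\eta^\gamma-C_\eta\varepsilon_0\ge\tfrac{c_0}{2}.
\]

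The residual case $\widetilde{Q}=P$ (which occurs exactly when $y$ and $x_Q$ lie in the same $\eta$-subcube of $Q$) is the main obstacle and is handled by a parallel argument one scale below: now $X_{\widetilde{Q}}=X_P$, the upper half of \eqref{c0c1} applied to $P$ gives $\omega_L^{X_{\widetilde{Q}}}(\widetilde{P})\le 1-c_0$, while the lower half gives $\omega_L^{X_{\widetilde{P}}}(\widetilde{P})\ge c_0$; processing the term $j=\ell+2$ via the same change-of-pole/$\varepsilon_0$-cover machinery produces the reversed oscillation $u(X_{\widetilde{P}})-u(X_{\widetilde{Q}})\ge c_0/2$. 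The delicate point here is tracking the constants $C_\eta$ through the various sub-cases in which $P$ strictly contains, equals, or is strictly contained in $Q_i^{\ell+1}(y)$; the hierarchy $\varepsilon_0\ll\eta\ll 1$ makes every $C_\eta\varepsilon_0$ term harmless.
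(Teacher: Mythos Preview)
Your treatment of the generic case $P_i^\ell\cap\widetilde{Q}_i^\ell=\emptyset$ is essentially the paper's argument (one small slip: for the tail $j\ge\ell+2$ you should control the piece inside $Q$, not inside $\widetilde{Q}$; the change-of-pole step goes through $X_Q$ and uses $\omega(\mathcal{O}_{\ell+1}\cap Q)\le\varepsilon_0\,\omega(Q)$, which handles all of $Q$, not just $\widetilde{Q}$).

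The residual case $P_i^\ell=\widetilde{Q}_i^\ell$ is where the proposal breaks down. First, ``\eqref{c0c1} applied to $P$'' gives $c_0\le\omega_L^{X_{\widetilde{P}}}(\widetilde{P})\le 1-c_0$, with pole $X_{\widetilde{P}}$ both times; your claimed bound $\omega_L^{X_{\widetilde{Q}}}(\widetilde{P})\le 1-c_0$ uses the wrong pole and does not follow. Second, descending to level $j=\ell+2$ is both unnecessary and illegal: the lemma covers $1\le\ell\le k-1$, and when $\ell=k-1$ there is no $j=\ell+2$ term in $u$. Third, the sub-cases you introduce concerning the relative position of $P$ and $Q_i^{\ell+1}(y)$ are a red herring; the argument never needs them.

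The paper's approach in this case stays at the \emph{same} level $j=\ell+1$. Since $P=\widetilde{Q}$, one has $P\cap\widetilde{\mathcal{O}}_\ell=P$, hence $E_{\ell+1}\cap P=P\setminus\mathcal{O}_{\ell+1}$. From $X_{\widetilde{P}}$ the H\"older-at-the-boundary estimate \eqref{est-HC-daydic} applied to $P$ (not \eqref{c0c1}) gives $\omega_L^{X_{\widetilde{P}}}(P)\ge 1-C\eta^\gamma$, and the usual change of pole gives $\omega_L^{X_{\widetilde{P}}}(P\cap\mathcal{O}_{\ell+1})\le C_\eta\varepsilon_0$; thus $u(X_{\widetilde{P}})\ge 1-\tfrac14 c_0$. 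On the other side, the \emph{upper} bound in \eqref{c0c1} is used at the original scale, $\omega_L^{X_{\widetilde{Q}}}(\widetilde{Q})\le 1-c_0$ (note: $\widetilde{Q}$, not $\widetilde{P}$), yielding $u(X_{\widetilde{Q}})\le 1-\tfrac34 c_0$. The reversed oscillation $u(X_{\widetilde{P}})-u(X_{\widetilde{Q}})\ge\tfrac12 c_0$ then follows directly, with no reference to level $\ell+2$ and no case analysis on $Q_i^{\ell+1}(y)$.
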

	
	Assume this result for now and continue the proof of  Lemma \ref{sq-function->M}. Fix $\eta$ and $\varepsilon_0$ as in Lemma \ref{claim2}. Fix also $y\in F$, $1\leq\ell\leq k-1$, and write $Q_i^\ell:=Q_i^\ell(y)\in\mathbb{D}_{Q_0}$, and $P_i^\ell:=P_i^\ell(y)\in\mathbb{D}_{Q_i^\ell}$ using the notation above. By construction  $X_{\widetilde{Q}_i^\ell}\in U_{\widetilde{Q}_i^\ell}$
	and $X_{\widetilde{P}_i^\ell}\in U_{\widetilde{P}_i^\ell}$, hence we can find Whitney cubes $I_{\widetilde{Q}_i^\ell}\in\mathcal{W}_{\widetilde{Q}_i^{\ell}}^*$ and $I_{\widetilde{P}_i^\ell}\in\mathcal{W}_{\widetilde{P}_i^{\ell}}^*$ so that $X_{\widetilde{Q}_i^\ell}\in I_{\widetilde{Q}_i^\ell}$ and $X_{\widetilde{P}_i^\ell}\in I_{\widetilde{P}_i^\ell}$.
	
	Also, note that $\ell(\widetilde{Q}_i^\ell)=\eta\ell(Q_i^\ell)$ and $\ell(\widetilde{P}_i^\ell)=\eta^2\ell(Q_i^\ell)$ which imply
	$\ell(\widetilde{Q}_i^\ell)>\ell(\widetilde{P}_i^\ell)>\eta^3\ell(Q_i^\ell)$ since $\eta<1$. On the other hand, $\widetilde{Q}_i^\ell \subset Q_i^\ell$ and  $\widetilde{P}_i^\ell \subset P_i^\ell\subset Q_i^\ell$, which in turn yield that $I_{\widetilde{Q}_i^\ell}^*$ and $I_{\widetilde{P}_i^\ell}^*$ are both contained in $U_{Q_i^\ell,\eta^3}$. Using \eqref{oscillation}, the notation $[u]_{U_{Q_i^\ell,\eta^3}}:=\bariint_{U_{Q_i^\ell,\eta^3}}udX$, Moser's ``local boundedness''  estimates and the previous observations we can obtain
	\begin{align*}
	\frac{c_0}2&\leq\big|u(X_{\widetilde{Q}_i^\ell})-[u]_{U_{Q_i^\ell,\eta^3}}\big|+\big|[u]_{U_{Q_i^\ell,\eta^3}}-u(X_{\widetilde{P}_i^\ell})\big|
	\\
	&\lesssim\bigg(\bariint_{I_{\widetilde{Q}_i^\ell}^*}\big|u(Y)-[u]_{U_{Q_i^\ell,\eta^3}}\big|^2\,dY \bigg)^{1/2} + \bigg(\bariint_{I_{\widetilde{P}_i^\ell}^*}\big|u(Y)-[u]_{U_{Q_i^\ell,\eta^3}}\big|^2\,dY \bigg)^{1/2}
	\\
	&\leq C_\eta\bigg(\ell(Q_i^\ell)^{-n-1}\iint_{U_{Q_i^\ell,\eta^3}}\big|u(Y)-[u]_{U_{Q_i^\ell,\eta^3}} \big|^2\,dY\bigg)^{1/2}
	\\
	&\leq C_\eta\bigg(\iint_{U_{Q_i^\ell,\eta^3}}|\nabla u(Y)|^2\delta(Y)^{1-n}\,dY\bigg)^{1/2},
	\end{align*}
	where the last estimate follows from the Poincaré's inequality in \cite[Lemma 3.1]{HMT-var}, and the fact that $\delta(Y)\approx_\eta \ell(Q_i^\ell)$ for every $Y\in U_{Q_i^\ell,\eta^3}$. Summing up the above estimate, taking into account that  the sets $\{U_{Q,\eta^3}\}_{Q\in\mathbb{D}_{Q_0}}$ have bounded overlap with constant depending on $\eta$, and using Lemma \ref{extract-eps-cover}, we obtain if $\alpha$ is small enough
	$$
	\frac{c_0^2}{4}\frac{\log{\alpha^{-1}}}{\log{\varepsilon_0^{-1}}}
	\approx
	\frac{c_0^2}{4}\,(k-1)
	\le  C_\eta\sum_{\ell=1}^{k-1}\iint_{U_{Q_i^\ell,\eta^3}}|\nabla u(Y)|^2\delta(Y)^{1-n}\,dY\le C_\eta \big( S_{Q_0}^\eta(u)(y)\big)^2.
	$$
	This completes the proof of Lemma \ref{sq-function->M}.
\end{proof}

\begin{proof}[Proof of Lemma \ref{claim2}]
	Fix $y\in F$ and write $Q_i^\ell:=Q_i^\ell(y)$, $P_i^\ell:=P_i^\ell(y)$. Our first goal is to estimate $u(X_{\widetilde{Q}_i^\ell})$. By  \eqref{est-HC-daydic} and using \eqref{def-u} we have
	\begin{multline}\label{decomp-u(XQ)}
	u(X_{\widetilde{Q}_i^\ell})=\omega_L^{X_{\widetilde{Q}_i^\ell}}(S)	
	\le \omega_L^{X_{\widetilde{Q}_i^\ell}}(\partial\Omega\setminus Q_i^\ell)+\omega_L^{X_{\widetilde{Q}_i^\ell}}(S\cap Q_i^\ell)
	\\
	\le C\eta ^\gamma+\omega_L^{X_{\widetilde{Q}_i^\ell}}(S\cap Q_i^\ell)	
	=:C\eta ^\gamma +\mathrm{I}.
	\end{multline}
	For $1\le \ell\le k-1$ we have that  $Q_i^\ell\subset\mathcal{O}_\ell\subset\mathcal{O}_j$ for each $2\leq j\leq\ell$ and hence by \eqref{def-G} we have 
	\begin{multline}\label{I:I1-I2}
	\mathrm{I}=
	\sum_{j=2}^k\omega_{L}^{X_{\widetilde{Q}_i^\ell}}\big(Q_i^\ell\cap(\widetilde{\mathcal{O}}_{j-1}\setminus\mathcal{O}_j)\big)
	=
	\sum_{j=\ell+1}^k\omega_{L}^{X_{\widetilde{Q}_i^\ell}}\big(Q_i^\ell\cap(\widetilde{\mathcal{O}}_{j-1}\setminus\mathcal{O}_j)\big)
	\\
	=
	\sum_{j=\ell+2}^k\omega_{L}^{X_{\widetilde{Q}_i^\ell}}\big(Q_i^\ell\cap(\widetilde{\mathcal{O}}_{j-1}\setminus\mathcal{O}_j)\big)
	+\omega_{L}^{X_{\widetilde{Q}_i^\ell}}\big(Q_i^\ell\cap(\widetilde{\mathcal{O}}_{\ell}\setminus\mathcal{O}_{\ell+1})\big)=:\mathrm{I}_1+\mathrm{I}_2,
	\end{multline}
	with the understanding that if $\ell=k-1$ then $\mathrm{I}_1=0$.
	
	Next, we claim that $\mathrm{I}_1\leq C_\eta\varepsilon_0$. This is clear if $\ell=k-1$. For $1\le \ell\le k-2$, using Harnack's inequality to move from $X_{\widetilde{Q}_i^\ell}$ to $X_{Q_i^\ell}$ (with constants depending on $\eta$), Lemma \ref{proppde} parts $(c)$ and $(d)$ (recall that $\omega=\omega_L^{X_{Q_0}}$), we have that
	\begin{multline}\label{I1}
	\mathrm{I}_1\leq C_\eta\sum_{j=\ell+2}^k\omega_{L}^{X_{Q_i^\ell}}\big(Q_i^\ell\cap(\widetilde{\mathcal{O}}_{j-1}\setminus\mathcal{O}_j)\big)
	\leq\frac{C_\eta}{\omega(Q_i^\ell)}\sum_{j=\ell+2}^k\omega\big(Q_i^\ell\cap(\widetilde{\mathcal{O}}_{j-1}\setminus\mathcal{O}_j)\big)
	\\
	\leq\frac{C_\eta}{\omega(Q_i^\ell)}\sum_{j=\ell+2}^k\omega(Q_i^\ell\cap\mathcal{O}_{j-1})\leq
	C_\eta\sum_{j=\ell+2}^k\varepsilon_0^{j-1-\ell}\leq C_\eta\varepsilon_0,
	\end{multline}
	where the next-to-last estimate follows from Lemma \ref{lemma-eps-cover} with $\mu=\omega$, and the last one uses that $\varepsilon_0<e^{-1}$.
	Let us now focus on $\mathrm{I}_2$.  Note that $Q_i^\ell\cap\widetilde{\mathcal{O}}_\ell=\widetilde{Q}_i^\ell$, hence \eqref{c0c1} yields
	\[
	\mathrm{I}_2=\omega_L^{X_{\widetilde{Q}_i^\ell}}(\widetilde{Q}_i^\ell\setminus\mathcal{O}_{\ell+1})
	\le\omega_L^{X_{\widetilde{Q}_i^\ell}}(\widetilde{Q}_i^\ell)\le 1-c_0
	.
	\]
	Collecting this with \eqref{decomp-u(XQ)}, \eqref{I:I1-I2}, \eqref{I1}, we conclude that
	\begin{equation}\label{u-Q-above}
	u(X_{\widetilde{Q}_i^\ell})
	\le
	C\eta^\gamma +C_\eta\varepsilon_0+1-c_0
	\le
	1-\frac34c_0,
	\end{equation}	
	by choosing first $\eta$ small enough so that $C\eta^\gamma<c_0/8$ and then $\varepsilon_0$ small enough so that $C_\eta\varepsilon_0<c_0/8$.

	To get a lower bound for $u(X_{\widetilde{Q}_i^\ell})$ we use that
	$Q_i^\ell\cap\widetilde{\mathcal{O}}_\ell=\widetilde{Q}_i^\ell$ and \eqref{c0c1}:
	\begin{multline*}
	u(X_{\widetilde{Q}_i^\ell})=\omega_L^{X_{\widetilde{Q}_i^\ell}}(S)
	\ge
	\omega_{L}^{X_{\widetilde{Q}_i^\ell}}\big(Q_i^\ell\cap(\widetilde{\mathcal{O}}_{\ell}\setminus\mathcal{O}_{\ell+1})\big)
	\\
	=
	\omega_L^{X_{\widetilde{Q}_i^\ell}}(\widetilde{Q}_i^\ell\setminus\mathcal{O}_{\ell+1})
	=
	\omega_L^{X_{\widetilde{Q}_i^\ell}}(\widetilde{Q}_i^\ell)-\omega_L^{X_{\widetilde{Q}_i^\ell}}(\widetilde{Q}_i^\ell\cap\mathcal{O}_{\ell+1})
	\ge c_0-\omega_L^{X_{\widetilde{Q}_i^\ell}}(\widetilde{Q}_i^\ell\cap\mathcal{O}_{\ell+1}).
	\end{multline*}
	Using Harnack's inequality to move from $X_{\widetilde{Q}_i^\ell}$ to $X_{Q_i^\ell}$ (with constants depending on $\eta$), Lemma \ref{proppde} parts $(c)$ and $(d)$ (recall that $\omega=\omega_L^{X_{Q_0}}$), we have that
	\begin{equation}\label{tgwegte}
	\omega_L^{X_{\widetilde{Q}_i^\ell}}(\widetilde{Q}_i^\ell\cap\mathcal{O}_{\ell+1})
	\leq C_\eta\omega_L^{X_{Q_i^\ell}}(Q_i^\ell\cap\mathcal{O}_{\ell+1})
	\leq{C_\eta} \frac{\omega(Q_i^\ell\cap\mathcal{O}_{\ell+1})}{\omega(Q_i^\ell)}
	\leq
	C_\eta\varepsilon_0,
	\end{equation}
	where the last estimate follows from Lemma \ref{lemma-eps-cover} with $\mu=\omega$ and since $1\le \ell \le k-1$. Assuming further that $C_\eta\varepsilon_0<c_0/4$ we arrive at
	\begin{equation}\label{u-Q-below}
	u(X_{\widetilde{Q}_i^\ell})
	\ge
	c_0-C_\eta\varepsilon_0\ge\frac34 c_0.
	\end{equation}
	
	Let us now focus on estimating $u(X_{\widetilde{P}_i^\ell})$ and we consider two cases:

	\noindent\textbf{Case 1:} $P_i^\ell\cap\widetilde{Q}_i^\ell=\emptyset$.  Much as before by  \eqref{est-HC-daydic}
	\begin{multline}\label{decomp-u(XP)}
	u(X_{\widetilde{P}_i^\ell})=\omega_L^{X_{\widetilde{P}_i^\ell}}(S)	
	\le \omega_L^{X_{\widetilde{P}_i^\ell}}(\partial\Omega\setminus P_i^\ell)+\omega_L^{X_{\widetilde{P}_i^\ell}}(S\cap P_i^\ell)
	\\
	\le C\eta ^\gamma+\omega_L^{X_{\widetilde{P}_i^\ell}}(S\cap P_i^\ell)	
	=:C\eta ^\gamma +\widehat{\mathrm{I}}.
	\end{multline}
	For $1\le \ell\le k-1$ we have that  $P_i^\ell\subset Q_i^\ell \subset\mathcal{O}_\ell\subset\mathcal{O}_j$ for each $2\leq j\leq\ell$ and hence
	\begin{multline}\label{I:I1-I2:hat}
	\widehat{\mathrm{I}}=
	\sum_{j=2}^k\omega_{L}^{X_{\widetilde{P}_i^\ell}}\big(P_i^\ell\cap(\widetilde{\mathcal{O}}_{j-1}\setminus\mathcal{O}_j)\big)
	=
	\sum_{j=\ell+1}^k\omega_{L}^{X_{\widetilde{P}_i^\ell}}\big(P_i^\ell\cap(\widetilde{\mathcal{O}}_{j-1}\setminus\mathcal{O}_j)\big)
	\\
	=
	\sum_{j=\ell+2}^k\omega_{L}^{X_{\widetilde{P}_i^\ell}}\big(P_i^\ell\cap(\widetilde{\mathcal{O}}_{j-1}\setminus\mathcal{O}_j)\big)
	+\omega_{L}^{X_{\widetilde{P}_i^\ell}}\big(P_i^\ell\cap(\widetilde{\mathcal{O}}_{\ell}\setminus\mathcal{O}_{\ell+1})\big)
	=:
	\widehat{\mathrm{I}}_1+\widehat{\mathrm{I}}_2,
	\end{multline}
	with the understanding that if $\ell=k-1$ then $\widehat{\mathrm{I}}_1=0$. The estimate for $\widehat{\mathrm{I}}_1$ (when $\ell\le k-2$) follows from that of $\mathrm{I}_1$ since
	using Harnack's inequality to move from $X_{\widetilde{P}_i^\ell}$ to $X_{\widetilde{Q}_i^\ell}$ and the fact that $P_i^\ell\subset Q_i^\ell$ we easily obtain from \eqref{I1}
	\begin{equation}\label{I1:hat}
	\widehat{\mathrm{I}}_1
	\le
	C_\eta
	\sum_{j=\ell+2}^k\omega_{L}^{X_{\widetilde{Q}_i^\ell}}\big(Q_i^\ell\cap(\widetilde{\mathcal{O}}_{j-1}\setminus\mathcal{O}_j)\big)
	=
	C_\eta \mathrm{I}_1
	\le
	C_\eta  \varepsilon_0.
	\end{equation}
	On the other hand, note that $P_i^\ell\cap(\widetilde{\mathcal{O}}_\ell\setminus\mathcal{O}_{\ell+1})=(P_i^\ell\cap \widetilde{Q}_i^\ell)\setminus\mathcal{O}_{\ell+1}\subset P_i^\ell\cap \widetilde{Q}_i^\ell=\emptyset$ and hence $\widehat{\mathrm{I}}_2=0$. Thus \eqref{decomp-u(XP)}, \eqref{I:I1-I2:hat}, and \eqref{I1:hat} yield
	\begin{equation}\label{conclusion1}
	u(X_{\widetilde{P}_i^\ell})\leq C\eta^\gamma+C_\eta\varepsilon_0\leq\frac{1}{4}c_0,
	\end{equation}
	by choosing first $\eta$ small enough so that $C\eta^\gamma<c_0/8$ and then $\varepsilon_0$ small enough so that $C_\eta\varepsilon_0<c_0/8$. This estimate along with \eqref{u-Q-below} give at once
	\[
	|u(X_{\widetilde{Q}_i^\ell})-u(X_{\widetilde{P}_i^\ell})|=u(X_{\widetilde{Q}_i^\ell})-u(X_{\widetilde{P}_i^\ell})\geq \frac{3}{4}c_0-\frac{1}{4}c_0=\frac{1}{2}c_0,
	\]
	which is the desired estimate.
	
	\medskip

	\noindent\textbf{Case 2:} $P_i^\ell\cap\widetilde{Q}_i^\ell\neq\emptyset$.  Notice that since both cubes have the same sidelength it follows that $P_i^\ell=\widetilde{Q}_i^\ell$. Our goal is to get a lower bound for $u(X_{\widetilde{P}_i^\ell})$. We use that
	$P_i^\ell\cap\widetilde{\mathcal{O}}_\ell= \widetilde{Q}_i^\ell\cap\widetilde{\mathcal{O}}_\ell= \widetilde{Q}_i^\ell=P_i^\ell$ and \eqref{est-HC-daydic}:
	\begin{multline*}
	u(X_{\widetilde{P}_i^\ell})=\omega_L^{X_{\widetilde{P}_i^\ell}}(S)
	\ge
	\omega_{L}^{X_{\widetilde{P}_i^\ell}}\big(P_i^\ell\cap(\widetilde{\mathcal{O}}_{\ell}\setminus\mathcal{O}_{\ell+1})\big)
	=
	\omega_L^{X_{\widetilde{P}_i^\ell}}(P_i^\ell\setminus\mathcal{O}_{\ell+1})
	\\
	=
	\omega_L^{X_{\widetilde{P}_i^\ell}}(P_i^\ell)-\omega_L^{X_{\widetilde{P}_i^\ell}}(P_i^\ell\cap\mathcal{O}_{\ell+1})
	\ge
	1-C\eta^\gamma-\omega_L^{X_{\widetilde{P}_i^\ell}}(P_i^\ell\cap\mathcal{O}_{\ell+1}).
	\end{multline*}
	Moreover, using Harnack's inequality to move from $X_{\widetilde{P}_i^\ell}$ to $X_{\widetilde{Q}_i^\ell}$ (with constants depending on $\eta$) and \eqref{tgwegte} we observe that
	\[
	\omega_L^{X_{\widetilde{P}_i^\ell}}(P_i^\ell\cap\mathcal{O}_{\ell+1})
	=
	\omega_L^{X_{\widetilde{P}_i^\ell}}(\widetilde{Q}_i^\ell\cap\mathcal{O}_{\ell+1})
	\le
	C_\eta
	\omega_L^{X_{\widetilde{Q}_i^\ell}}(\widetilde{Q}_i^\ell\cap\mathcal{O}_{\ell+1})
	\le
	C_\eta\varepsilon_0.
	\]
	Collecting the obtained estimates we conclude that
	\begin{equation}\label{u-P-below}
	u(X_{\widetilde{Q}_i^\ell})
	\ge
	1-C\eta^\gamma-C_\eta\varepsilon_0
	\ge
	1-\frac14 c_0,
	\end{equation}
	if we choose first $\eta$ small enough so that $C\eta^\gamma<c_0/8$ and then $\varepsilon_0$ small enough so that $C_\eta\varepsilon_0<c_0/8$.
	If we now gather \eqref{u-Q-above} and \eqref{u-P-below} we eventually obtain the desired estimate
	\[
	\big|u(X_{\widetilde{Q}_i^\ell})-u(X_{\widetilde{P}_i^\ell})\big|=u(X_{\widetilde{P}_i^\ell})-u(X_{\widetilde{Q}_i^\ell})\geq \bigg(1-\frac{1}{4}c_0\bigg)-\bigg(1-\frac{3}{4}c_0\bigg)=\frac12 c_0.
	\]
	This completes the proof.
\end{proof}

\subsection{$A_\infty$ implies the Carleson measure condition}\label{section:Proof-CME:b->a}

The proof of Theorem \ref{theor:cme-implies-ainf}: $(b)\Longrightarrow (a)$ requires 
 some additional notation and several auxiliary results. 
 
 Let $Q_0\in\mathbb{D}$ and $\alpha=\{\alpha_Q\}_{Q\in\mathbb{D}_{Q_0}}$ be a sequence of non-negative numbers indexed by the dyadic cubes in $\mathbb{D}_{Q_0}$. For any collection $\mathbb{D}'\subset\mathbb{D}_{Q_0}$, we define the associated discrete ``measure''
\begin{equation}\label{defmeasure}
\mathfrak{m}_\alpha(\mathbb{D}'):=\sum_{Q\in\mathbb{D}'}\alpha_Q.
\end{equation}
We say that $\mathfrak{m}_\alpha$ is a discrete ``Carleson measure'' (with respect to $\sigma$) in $Q_0$, if
\begin{equation}\label{defcarlesonmeasure}
\|\mathfrak{m}_\alpha\|_{\mathcal{C}(Q_0)}:=\sup_{Q\in\mathbb{D}_{Q_0}}\frac{\mathfrak{m}_\alpha(\mathbb{D}_Q)}{\sigma(Q)}<\infty.
\end{equation}

The following result reduces the desired Carleson measure estimate to a discrete one:

\begin{lemma}\label{lemmareduction}
	Let $\Omega\subset\re^{n+1}$ be a 1-sided $\mathrm{CAD}$ and let $Lu=-\div(A\nabla u)$ be a real (not necessarily symmetric) elliptic operator. Let $u\in W^{1,2}_{\rm loc}(\Omega)\cap L^\infty(\Omega)$ satisfy $Lu=0$ in the weak sense in $\Omega$ and define
	\begin{equation}\label{defcoefficients}
	\alpha:=\{\alpha_Q\}_{Q\in\mathbb{D}}
	:=\Big\{\iint_{U_Q}|\nabla u(X)|^2\delta(X)\,dX\Big\}_{Q\in\mathbb{D}}.
	\end{equation}
	Suppose that there exist $C_0,M_0\geq 1$ such that $\|\mathfrak{m}_\alpha\|_{\mathcal{C}(Q)}\leq C_0\|u\|_{L^{\infty}(\Omega)}^2$ for every $Q\in\mathbb{D}(\partial\Omega)$ verifying $\ell(Q)<\diam(\partial\Omega)/M_0$. Then,
	\begin{equation}\label{cmeestimate:new}
	\sup_{\substack{x\in\partial\Omega \\ 0<r<\infty}}\frac{1}{r^n}\iint_{B(x,r)\cap\Omega}|\nabla u(X)|^2\delta(X)\,dX\leq C(1+C_0+M_0)\|u\|_{L^\infty(\Omega)}^2,
	\end{equation}
	where $C$ depends only on dimension, the 1-sided $\mathrm{CAD}$ constants, and the ellipticity of $L$.
\end{lemma}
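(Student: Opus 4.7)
The plan is to reduce the continuous Carleson-type bound to the discrete one through the Whitney decomposition. Recall that every $I \in \mathcal{W}$ carries a distinguished dyadic cube $Q_I \in \mathbb{D}(\partial\Omega)$ with $\ell(Q_I) = \ell(I)$ and $I \in \mathcal{W}_{Q_I}^*$, so that $I \subset I^* \subset U_{Q_I}$. This gives at once
\begin{equation*}
\iint_I |\nabla u(X)|^2 \delta(X)\, dX \;\le\; \alpha_{Q_I},
\end{equation*}
and the map $I \mapsto Q_I$ is boundedly many-to-one by the Whitney/dyadic geometry. Fixing $x \in \partial\Omega$, $r>0$, and summing over Whitney cubes that meet $B(x,r)$, we obtain
\begin{equation*}
\iint_{B(x,r)\cap\Omega}|\nabla u|^2\,\delta\,dX \;\le\; C \sum_{\substack{Q\in\mathbb{D}(\partial\Omega)\\ \exists\, I\,:\,Q_I=Q,\,I\cap B(x,r)\neq\emptyset}} \alpha_Q.
\end{equation*}
A short geometric computation (using $\delta(Y) \le |Y-x| < r$ for $Y\in I \cap B(x,r)$, and $\ell(I)\approx\delta(Y)$, together with $\dist(I,Q_I)\approx\ell(I)$) shows that any such $Q$ satisfies $\ell(Q)\lesssim r$ and $Q\subset \Delta(x, C_1 r)$ for a uniform constant $C_1$.

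\textbf{Main case:} $r < \diam(\partial\Omega)/(C_2 M_0)$ for a suitable $C_2$ depending on allowable parameters. Then $\Delta(x, C_1 r)$ can be covered by a uniformly bounded number $N_0$ of cubes $\{Q^*_j\}\subset\mathbb{D}(\partial\Omega)$ with $\ell(Q^*_j)\approx r < \diam(\partial\Omega)/M_0$. Every relevant $Q$ sits in some $\mathbb{D}_{Q^*_j}$, so the discrete Carleson hypothesis applied to each $Q^*_j$ gives
\begin{equation*}
\sum_{Q}\alpha_Q \;\le\; \sum_{j=1}^{N_0} \mathfrak{m}_\alpha(\mathbb{D}_{Q^*_j}) \;\le\; C_0\|u\|_{L^\infty(\Omega)}^2\sum_j \sigma(Q^*_j) \;\lesssim\; N_0\, C_0\,\|u\|_{L^\infty(\Omega)}^2\, r^n,
\end{equation*}
which is exactly the desired estimate in \eqref{cmeestimate:new}.

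\textbf{Residual case:} $r \ge \diam(\partial\Omega)/(C_2 M_0)$, relevant only when $\partial\Omega$ is bounded. Here $r^n \gtrsim \sigma(\partial\Omega)/M_0^n$, so it suffices to bound $\iint_\Omega |\nabla u|^2\,\delta\,dX$. I split the Whitney cubes into ``small'' ones with $\ell(I)<\diam(\partial\Omega)/M_0$ and ``large'' ones with $\ell(I)\ge\diam(\partial\Omega)/M_0$. For the small cubes, their $Q_I$'s fit into a finite covering of $\partial\Omega$ (of cardinality $\lesssim M_0^n$ by Ahlfors regularity) by dyadic cubes of size $\approx\diam(\partial\Omega)/M_0$; the discrete Carleson hypothesis then controls their contribution by $\lesssim C_0\,\|u\|_{L^\infty(\Omega)}^2\,\diam(\partial\Omega)^n$. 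For the large cubes, which are finitely many (with multiplicity controlled by $M_0$ and the 1-sided CAD constants), the Caccioppoli inequality yields
\begin{equation*}
\iint_I |\nabla u|^2\,\delta\,dX \;\lesssim\; \ell(I)\iint_{I}|\nabla u|^2\,dX \;\lesssim\; \ell(I)^{-1}\|u\|_{L^\infty(\Omega)}^2\,|I^*| \;\lesssim\; \ell(I)^n\,\|u\|_{L^\infty(\Omega)}^2,
\end{equation*}
and the sum is again controlled by a multiple (depending on $M_0$) of $\diam(\partial\Omega)^n\|u\|_{L^\infty(\Omega)}^2$. Dividing by $r^n$ finishes this case.

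The key step is the geometric lemma relating the Whitney cubes hitting $B(x,r)$ to dyadic surface cubes in $\Delta(x,C_1 r)$, which is what allows the discrete Carleson bound to feed back into the continuous one. The main obstacle is the bookkeeping in the bounded-diameter residual case: Whitney cubes of size comparable to $\diam(\partial\Omega)$ have no dyadic boundary cube of admissible size, and the Caccioppoli inequality has to be invoked by hand to patch the decomposition (they cannot be touched via the discrete Carleson hypothesis).
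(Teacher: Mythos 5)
Your overall strategy is the same as the paper's: transfer the discrete Carleson hypothesis to the continuous estimate through the Whitney decomposition, using the assignment $I\mapsto Q_I$ with $\ell(Q_I)=\ell(I)$ and $I\subset I^*\subset U_{Q_I}$, and patch the oversized Whitney cubes with Caccioppoli. Your main case ($r<\diam(\partial\Omega)/(C_2M_0)$) is correct and is essentially the paper's argument for the term $\mathrm{I}$.

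The residual case, however, has a genuine gap. You replace $\iint_{B(x,r)\cap\Omega}$ by $\iint_{\Omega}$ and then assert that the Whitney cubes with $\ell(I)\ge\diam(\partial\Omega)/M_0$ are ``finitely many, with multiplicity controlled by $M_0$ and the $1$-sided CAD constants.'' This is false when $\Omega$ is unbounded but $\partial\Omega$ is bounded (e.g.\ $\Omega=\re^{n+1}\setminus\overline{B}$, which is a $1$-sided CAD and is within the scope of the lemma): there are then infinitely many large Whitney cubes --- boundedly many at each dyadic scale $2^k\gtrsim\diam(\partial\Omega)$, but infinitely many scales --- and their total Caccioppoli contribution $\sum_I\ell(I)^n\|u\|_{L^\infty(\Omega)}^2$ diverges, so $\iint_\Omega|\nabla u|^2\delta\,dX$ need not be $\lesssim_{M_0}\diam(\partial\Omega)^n\|u\|_{L^\infty(\Omega)}^2$ (nor even finite). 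The point you are missing is that the localization to $B(x,r)$ must be retained: a Whitney cube meeting $B(x,r)$ satisfies $\diam(I)\le\dist(I,\partial\Omega)<r$, so only the scales $\diam(\partial\Omega)\lesssim 2^k\lesssim r$ occur, with $O(1)$ cubes per scale (they all lie in $B(x,C2^k)$), and the geometric sum gives $\lesssim r^n\|u\|_{L^\infty(\Omega)}^2$, which is exactly what is needed after dividing by $r^n$. This is how the paper handles the term $\mathrm{II}$. A secondary (and for the intended application harmless) defect is that your residual case only yields a constant of the form $C(M_0)(1+C_0)$ with $C(M_0)\sim M_0^{n}$ or worse, rather than the additive $C(1+C_0+M_0)$ stated in the lemma; the paper gets the cleaner constant by first proving the dyadic bound $\sigma(Q)^{-1}\iint_{T_Q}|\nabla u|^2\delta\,dX\lesssim C_0+M_0$ uniformly over \emph{all} $Q\in\mathbb{D}(\partial\Omega)$ and then summing over a \emph{pairwise disjoint} family of cubes of total measure $\lesssim r^n$, rather than over a covering.
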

\begin{proof}
	By homogeneity we may assume that $\|u\|_{L^{\infty}(\Omega)}=1$.
	First, we claim that
	\begin{equation}\label{dyadiccmeestimate}
	\sup_{Q\in\mathbb{D}(\partial\Omega)}\frac{1}{\sigma(Q)}\iint_{T_{Q}}|\nabla u(X)|^2\delta(X)\,dX\lesssim C_0+M_0.
	\end{equation}
	Given $Q_0\in\mathbb{D}(\partial\Omega)$ such that $\ell(Q_0)<\diam(\partial\Omega)/M_0$, we have that
	$$
	\iint_{T_{Q_0}}|\nabla u(X)|^2\delta(X)\,dX
	\leq\sum_{Q\in\mathbb{D}_{Q_0}}\alpha_Q= \mathfrak{m}_\alpha(\mathbb{D}_{Q_0})
	\leq
	\|\mathfrak{m}_\alpha\|_{\mathcal{C}(Q_0)}\sigma(Q_0)
	\leq
	C_0\sigma(Q_0).
	$$
	Otherwise, if $\ell(Q_0)\geq\diam(\partial\Omega)/M_0$ (this happens only if $\diam(\partial\Omega)<\infty)$, there exists a unique $k_0\ge 1$ so that
	\[
	2^{k_0-1}\frac{\diam(\partial\Omega)}{M_0}
	\le
	\ell(Q_0)
	<2^{k_0}\frac{\diam(\partial\Omega)}{M_0}.
	\]
	As observed before if $\diam(\partial\Omega)<\infty$ then $\ell(Q_0)\lesssim \diam(\partial\Omega)$ hence $2^{k_0}\lesssim M_0$. Define the disjoint collection $
	\mathcal{D}_{0}:=\big\{Q'\in\mathbb{D}_{Q_0}:\:\ell(Q')=2^{-k_0}\ell(Q_0) \big\}$ and let
	\[
	\mathbb{D}_{Q_0}^{\rm small}:=\big\{Q\in\mathbb{D}_{Q_0}:\:\ell(Q)\le 2^{-k_0}\ell(Q_0)\big\},
	\quad
	\mathbb{D}_{Q_0}^{\rm big}:=\big\{Q\in\mathbb{D}_{Q_0}:\:\ell(Q)> 2^{-k_0}\ell(Q_0)\big\}.
	\]
	Note that
	\begin{equation}\label{tt-2}
	\iint_{T_{Q_0}}|\nabla u(X)|^2\delta(X)\,dX
	\leq\sum_{Q\in\mathbb{D}_{Q_0}^{\rm small}}\alpha_Q+\sum_{Q\in\mathbb{D}_{Q_0}^{\rm big}}\alpha_Q + \sum_{Q\in\mathcal{D}_0}\alpha_Q=:\mathrm{I}_{Q_0}+\mathrm{II}_{Q_0}.
	\end{equation}
	Note that if $Q\in\mathbb{D}_{Q_0}^{\rm small}$, there exists a unique $Q'\in\mathcal{D}_{0}$ such that $Q\in\mathbb{D}_{Q'}$, hence
	\begin{equation}\label{tt-3}
	\mathrm{I}_{Q_0}
	=
	\sum_{Q'\in\mathcal{D}_{0}}\sum_{Q\in\mathbb{D}_{Q'}}\alpha_Q
	=
	\sum_{Q'\in\mathcal{D}_{0}}\mathfrak{m}_\alpha(\mathbb{D}_{Q'})
	\leq
	\sum_{Q'\in\mathcal{D}_{0}}\|\mathfrak{m}_\alpha\|_{\mathcal{C}(Q')}\sigma(Q')
	\leq
	C_0\sigma(Q_0).
	\end{equation}
	where we have used our hypothesis since $\ell(Q')=2^{-k_0}\ell(Q_0)<\diam(\partial\Omega)/M_0$.
	For the second term, since $\delta(X)\approx\ell(Q)$ for $X\in U_Q$, we write
	\begin{multline}\label{tt-4}
	\mathrm{II}_{Q_0}
	\lesssim
	\sum_{Q\in\mathbb{D}_{Q_0}^{\rm big}}\ell(Q)\iint_{U_Q}|\nabla u(X)|^2\,dX
	\lesssim
	\sum_{Q\in\mathbb{D}_{Q_0}^{\rm big}}\ell(Q)^{-1}\iint_{U_Q^{*}}|u(X)|^2\,dX
	\\
	\lesssim 2^{k_0}\ell(Q_0)^{-1} |T_{Q_0}^*|
	\lesssim M_0\sigma(Q_0),
	\end{multline}
	where we have used Caccioppoli's inequality, the fact that the family $\{U_{Q}^*\}_{Q\in\mathbb{D}}$ has bounded overlap, the normalization $\|u\|_{L^{\infty}(\Omega)}=1$, \eqref{definicionkappa12}, the $\mathrm{AR}$ property, and that $2^{k_0}\lesssim M_0$. 
	Combining \eqref{tt-2}, \eqref{tt-3}, and \eqref{tt-4} we have that \eqref{dyadiccmeestimate} holds.

	Our next goal is to see that \eqref{dyadiccmeestimate} yields \eqref{cmeestimate:new}. For $x\in \partial\Omega$ and $0<r<\infty$. Set
	\[
	\mathcal{I}=\{I\in\mathcal W: I\cap B(x,r)\neq \emptyset\}.
	\]
	Given $I\in\mathcal{I}$, let $Z_I\in I\cap B(x,r)$ and note that by \eqref{constwhitney}
	\begin{equation}\label{25tw3}
	\diam(I)\le\dist(I,\partial\Omega)\le |Z_I-x|<r.
	\end{equation}
	Set
	\[
	\mathcal{I}^{\rm small}=\{I\in\mathcal I: \ell(I)<\diam(\partial\Omega)/4\},
	\qquad
	\mathcal{I}^{\rm big}=\{I\in\mathcal I: \ell(I)\ge \diam(\partial\Omega)/4\},
	\]
	with the understanding that $\mathcal{I}^{\rm big}=\emptyset$ if $\diam(\partial\Omega)=\infty$.
	Then,
	\begin{multline}\label{tt-6}
	\iint_{B(x,r)\cap\Omega}|\nabla u|^2\delta(X)\,dX
	\le
	\sum_{I\in \mathcal{I}^{\rm small}}\iint_{I}|\nabla u|^2\delta(X)\,dX
	\\
	+
	\sum_{I\in \mathcal{I}^{\rm big}}\iint_{I}|\nabla u|^2\delta(X)\,dX
	=
	\mathrm{I}+\mathrm{II},
	\end{multline}
	here we understand that $\mathrm{II}=0$ if $\mathcal{I}^{\rm big}=\emptyset$.
	
	To estimate $\mathrm{I}$ we set $r_0=\min\{r, \diam(\partial\Omega)/4\}$ and pick $k_2\in\mathbb{Z}$ so that $2^{k_2-1}\le r_0<2^{k_2}$. Set
	\[
	\mathcal{D}_1=\{Q\in\mathbb{D}: \ell(Q)=2^{k_2},\: Q\cap \Delta(x,3r)\neq\emptyset\}.
	\]
	Given $I\in  \mathcal{I}^{\rm small}$ we pick $y\in\partial\Omega$ so that $\dist(I,\partial\Omega)=\dist(I,y)$. Hence there exists a unique $Q_I\in\mathbb{D}$ so that
	$y\in Q_I$ and $\ell(Q_I)=\ell(I)<r_0\le \diam(\partial\Omega)/4$ by the definition of $\mathcal{I}^{\rm small}$ and our choice of $r_0$. This as mentioned above implies that $I\in \mathcal{W}_{Q_I}^*$. On the other hand by \eqref{25tw3}
	$$
	|y-x|
	\le
	\dist(y, I)+\diam(I)+|Z_I-x|
	<3r,
	$$
	hence there exists a unique $Q\in \mathcal{D}_1$ so that $y\in Q$. Since $\ell(Q_I)<r_0<2^{k_2}=\ell(Q)$  we conclude that $Q_I\subset Q$
	and consequently $I\subset \interior(U_{Q_I})\subset T_Q$. In short we have shown that  if $I\in  \mathcal{I}^{\rm small}$ then there exists $Q\in\mathcal{D}_1$ so that  $I\subset T_Q$. Thus,
	\begin{multline}\label{tt-7}
	\mathrm{I}
	\le
	\sum_{Q\in\mathcal{D}_1} \iint_{T_Q}|\nabla u|^2\delta\,dX
	\lesssim
	(C_0+M_0)\sum_{Q\in\mathcal{D}_1} \sigma(Q)
	=
	(C_0+M_0)\sigma\Big(\bigcup_{Q\in\mathcal{D}_1} Q\Big)
	\\
	\le
	(C_0+M_0)\sigma(\Delta(x,Cr))
	\lesssim
	(C_0+M_0) r^n,
	\end{multline}
	where we have used that the Whitney boxes have non-overlapping interiors,  \eqref{dyadiccmeestimate}, the fact that $\mathcal{D}_1$ is a pairwise disjoint family, that $\bigcup_{Q\in\mathcal{D}_1} Q\subset \Delta(x,Cr)$ ($C$ depends on $n$ and the AR constant), and that $\partial\Omega$ is Ahlfors regular.
	
	We now estimate $\mathrm{II}$ using \eqref{constwhitney}, Caccioppoli's inequality and our assumption $\|u\|_{L^{\infty}(\Omega)}=1$:
	\begin{multline}\label{tt-8}
	\mathrm{II}
	\lesssim
	\sum_{I\in \mathcal{I}^{\rm big}}\ell(I)\iint_{I}|\nabla u|^2\,dX
	\lesssim
	\sum_{I\in \mathcal{I}^{\rm big}}\ell(I)^{-1}\iint_{I^*}|u|^2\,dX
	\\
	\lesssim
	\sum_{I\in \mathcal{I}^{\rm big}} \ell(I)^n
	\leq
	\sum_{\frac{\diam(\partial\Omega)}4\le 2^k<r } 2^{kn} \#\{I\in \mathcal{I}^{\rm big}:\ell(I)=2^{k}\}.
	\end{multline}
	To estimate the last term we observe that if $Y\in I\in \mathcal{I}^{\rm big}$ we have by \eqref{constwhitney}
	\[
	|Y-x|
	\le
	\diam(I)+\dist(I,\partial\Omega)+\diam(\partial\Omega)\lesssim \ell(I).
	\]
	This and the fact that Whitney boxes have non-overlapping interiors imply
	\begin{multline*}
	\#\{I\in \mathcal{I}^{\rm big}:\ell(I)=2^{k}\}
	=
	2^{-k(n+1)}\sum_{I\in \mathcal{I}^{\rm big}:\ell(I)=2^{k}} |I|
	\\
	=
	2^{-k(n+1)}\Big|\bigcup_{I\in \mathcal{I}^{\rm big}:\ell(I)=2^{k}} I\Big|
	\le
	2^{-k(n+1)}
	|B(x, C2^k)|
	\lesssim
	1.
	\end{multline*}
	Therefore,
	\[
	\mathrm{II}
	\lesssim
	\sum_{\frac{\diam(\partial\Omega)}4\le 2^k<r } 2^{kn}
	\lesssim
	r^n.
	\]
	Collecting the estimates for $\mathrm{I}$ \eqref{tt-7} and $\mathrm{II}$ \eqref{tt-8} we obtain \eqref{cmeestimate:new}.
\end{proof}
\medskip

\begin{proof}[Proof of Theorem \ref{theor:cme-implies-ainf}: $(b)\Longrightarrow (a)$]
Let $u\in W^{1,2}_{\rm loc}(\Omega)\cap L^\infty(\Omega)$ be so that $Lu=0$ in the weak sense in $\Omega$. Our goal is to prove that \eqref{cmeestimate} holds. By homogeneity we may assume, without loss of generality, that $\|u\|_{L^\infty(\Omega)}=1$.  On the other hand, by Lemma \ref{lemmareduction} we can reduce matters to establish
that $\|\mathfrak{m}_\alpha\|_{\mathcal{C}(Q)}\leq C_0$, for every $Q\in\mathbb{D}(\partial\Omega)$ such that $\ell(Q)<\diam(\partial\Omega)/M_0$ and where $\alpha$ is given in \eqref{defcoefficients}. To show this we fix $M_0>2\kappa_0/c$, where $c$ is the corkscrew constant and $\kappa_0$ as in \eqref{definicionkappa12}. We also fix a cube $Q^0\in\mathbb{D}(\partial\Omega)$ with $\ell(Q^0)<\diam(\partial\Omega)/M_0$. Applying \cite[Lemma 3.12]{HMT-var} it suffices to show that for every $Q_0\in\mathbb{D}_{Q^0}$ we can find some pairwise disjoint family $\mathcal{F}_{Q_0}\subset\mathbb{D}_{Q_0}\setminus\{Q_0\}$ satisfying
\begin{equation}\label{amplecontact}
\sigma\Big(Q_0\setminus\bigcup_{Q_j\in\mathcal{F}_{Q_0}}Q_j\Big)\geq K_1^{-1}\sigma(Q_0),
\end{equation}
and prove that
\begin{equation}\label{sawtoothcarleson}
\mathfrak{m}_{\alpha}(\mathbb{D}_{\mathcal{F}_{Q_0},Q_0})\leq M_1\sigma(Q_0).
\end{equation}

With all the previous reductions our main goal is to find $\mathcal{F}_{Q_0}$ so that \eqref{amplecontact} holds and establish \eqref{sawtoothcarleson}. Having these in mind we let $B_{Q_0}:=B(x_{Q_0},r_{Q_0})$ with $r_{Q_0}\approx\ell(Q_0)$ as in \eqref{deltaQ}. Let $X_0:=X_{M_0\Delta_{Q_0}}$ be the corkscrew point relative to $M_0\Delta_{Q_0}$ (note that $M_0r_{Q_0}\leq M_0\ell(Q_0)<\diam(\partial\Omega)$). By our choice of $M_0$, it is clear that $Q_0\subset M_0\Delta_{Q_0}$ and also that $\delta(X_0)\geq c M_0 r_{Q_0}>2\kappa_0 r_{Q_0}$. Hence, by \eqref{definicionkappa12},
\begin{equation}\label{eq:X0-TQ}
X_0\in\Omega\setminus B_{Q_0}^*.
\end{equation}
On the other hand, $\delta(X_{Q_0})\approx \ell(Q_0)$, $\delta(X_0)\approx M_0\ell(Q_0)\geq\ell(Q_0)$, and $|X_0-X_{Q_0}|\lesssim M_0\ell(Q_0)$. Using Lemma \ref{bourgain} and Harnack's inequality, there exists $C_0\geq 1$ depending on the 1-sided $\mathrm{CAD}$ constants, the ellipticity of $L$, and on $M_0$ (which is already fixed), such that $\omega_{L}^{X_0}(Q_0)\geq C_0^{-1}$.

Next, we define the normalized elliptic measure and Green function as
\begin{equation}\label{normalizacion}
\omega_0:=C_0\,\sigma(Q_0)\omega_{L}^{X_0},\qquad\text{and}\qquad\mathcal{G}_0(\cdot):=C_0\,\sigma(Q_0)G_{L}(X_0,\cdot).
\end{equation}
Note the fact that $\omega_{L}^{X_0}(\partial\Omega)\leq 1$ implies
\[
1\leq\frac{\omega_0(Q_0)}{\sigma(Q_0)}\leq C_0.
\]
Recall that we have assumed that $\omega_{L}\in A_\infty(\partial\Omega)$ and, as observed above, this means after passing to the previous renormalization that $\omega_0\ll\sigma$ and we write $k_0=d\omega_0/d\sigma$ for the Radon-Nikodym derivative. Using \eqref{rhqL} we have that there exists $q>1$ such that since
 $Q_0\subset M_0\Delta_{Q_0}$, we have
$$
\bigg(\barint_{Q_0}k_0(y)^q\,d\sigma(y)\bigg)^{1/q}
\le
C_2.
$$
In particular, for any Borel set $F\subset Q_0$, using H\"older's inequality we obtain
$$
\frac{\omega_0(F)}{\sigma(Q_0)}\leq\bigg(\barint_{Q_0}\mathbf{1}_F(y)^{q'}\,d\sigma(y)\bigg)^{1/q'}\bigg(\barint_{Q_0}k_0(y)^q\,d\sigma(y)\bigg)^{1/q}
\leq C_2\Big(\frac{\sigma(F)}{\sigma(Q_0)}\Big)^{1/q'}.
$$
Hence we can apply \cite[Lemma 3.5]{HMT-var} to $\mu=\omega_0$, and extract a pairwise disjoint family $\mathcal{F}_{Q_0}=\{Q_j\}\subset\mathbb{D}_{Q_0}\setminus\{Q_0\}$ verifying \eqref{amplecontact}, as well as
\begin{equation}\label{propsawtooth2}
\frac{1}{2}\leq\frac{\omega_0(Q)}{\sigma(Q)}\leq K_0K_1,\qquad\forall\, Q\in\mathbb{D}_{\mathcal{F}_{Q_0},Q_0},
\end{equation}
with $K_1=(4K_0)^{1/\theta}$, $K_0=\max\{C_0,C_2\}$, and $\theta=1/q'$.

We next observe that if $I\in\mathcal{W}_Q^*$ with $Q\in\mathbb{D}_{\mathcal{F}_{Q_0},Q_0}$ then $2B_Q\subset B_{Q_0}^*$ (see \eqref{definicionkappa12}). Hence, using Harnack's inequality, parts $(b)$ and $(c)$ of Lemma \ref{proppde}, \eqref{propsawtooth2} and the $\mathrm{AR}$ property we have
\begin{equation}\label{G-delta}
\frac{\mathcal{G}_0(X_I)}{\ell(I)}
\approx
\frac{\mathcal{G}_0(X_I)}{\delta(X_I)}
\approx
\frac{\omega_0(\Delta_{Q})}{\sigma(Q)}
\approx
1,
\end{equation}
where $X_I$ is the center of $I$.

At this point, we are looking for $M_1$ independent of $Q_0$ and $Q^0$ such that \eqref{sawtoothcarleson} holds. Recalling \eqref{defcoefficients} we note that
\begin{multline}\label{cmegreen}
\mathfrak{m}_\alpha(\mathbb{D}_{\mathcal{F}_{Q_0},Q_0})
=
\sum_{Q\in\mathbb{D}_{\mathcal{F}_{Q_0},Q_0}}\iint_{U_Q}|\nabla u(X)|^2\delta(X)\,dX
\\
\approx
\sum_{Q\in\mathbb{D}_{\mathcal{F}_{Q_0},Q_0}}\iint_{U_Q}|\nabla u(X)|^2\mathcal{G}_0(X)\,dX
\lesssim
\iint_{\Omega_{\mathcal{F}_{Q_0},Q_0}}|\nabla u(X)|^2\mathcal{G}_0(X)\,dX,
\end{multline}
where we have used Harnack's inequality, \eqref{G-delta}, and the bounded overlap of the family $\{U_Q\}_{Q\in\mathbb{D}}$.

As in Section \ref{sec2.3} for every $N\ge 1$ we can consider the pairwise disjoint collection $\mathcal{F}_N:=\mathcal{F}_{Q_0}\big(2^{-N}\ell(Q_0)\big)$ which is the family of maximal cubes of the collection $\mathcal{F}_{Q_0}$ augmented by adding all of the cubes $Q\in\mathbb{D}_{Q_0}$ such that $\ell(Q)\leq 2^{-N}\ell(Q_0)$. In particular, $Q\in\mathbb{D}_{\mathcal{F}_N,Q_0}$ if and only if $Q\in\mathbb{D}_{\mathcal{F}_{Q_0},Q_0}$ and $\ell(Q)>2^{-N}\ell(Q_0)$. Clearly, $\mathbb{D}_{\mathcal{F}_N,Q_0}\subset\mathbb{D}_{\mathcal{F}_{N'},Q_0}$ if $N\leq N'$, and therefore $\Omega_{\mathcal{F}_N,Q_0}\subset\Omega_{\mathcal{F}_{N'},Q_0}\subset\Omega_{\mathcal{F}_{Q_0},Q_0}$. This and the monotone convergence theorem give that
\begin{equation}\label{TCM}
\iint_{\Omega_{\mathcal{F}_{Q_0},Q_0}}|\nabla u(X)|^2\mathcal{G}_0(X)\,dX
=
\lim_{N\rightarrow\infty}\iint_{\Omega_{\mathcal{F}_N,Q_0}}|\nabla u(X)|^2\mathcal{G}_0(X)\,dX.
\end{equation}

We now formulate an auxiliary result that will lead us to the desired estimate, namely \eqref{sawtoothcarleson}.

\begin{proposition}\label{pdelemma:I}
	Given $C_1\geq 1$, one can find $C$ such that if $\mathcal{F}_N\subset\mathbb{D}_{Q_0}$, $N\in\mathbb{N}$, is a family of pairwise disjoint dyadic cubes satisfying
	\begin{equation}\label{hyp-pdelemma:I}
	C_1^{-1}\leq\frac{\omega_0(Q)}{\sigma(Q)}\leq C_1\qquad\text{and}\qquad\ell(Q)>2^{-N}\ell(Q_0),\qquad\forall\, Q\in\mathbb{D}_{\mathcal{F}_N,Q_0},
	\end{equation}
	then
	\begin{equation}\label{conc-pdelemma:I}
	\iint_{\Omega_{\mathcal{F}_N,Q_0}}|\nabla u(X)|^2\mathcal{G}_0(X)\,dX\leq C\sigma(Q_0).
	\end{equation}
	Here, $C$ depends only on dimension, the 1-sided $\mathrm{CAD}$ constants, and the ellipticity of $L$.
\end{proposition}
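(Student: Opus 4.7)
The plan is to exploit the pointwise comparison $\mathcal{G}_0\approx\delta$ on the sawtooth, already recorded in \eqref{G-delta} as a consequence of \eqref{hyp-pdelemma:I}, to replace $\delta\,dX$ by $\mathcal{G}_0\,dX$ in the integrand and then integrate by parts using both $Lu=0$ and $L^\top\mathcal{G}_0=0$. The latter holds in a neighborhood of the sawtooth because $\mathcal{G}_0(\cdot)=C_0\sigma(Q_0)\,G_{L^\top}(\cdot,X_0)$ and, by \eqref{eq:X0-TQ}, $X_0\notin B_{Q_0}^*\supset\overline{\Omega^{**}_{\mathcal{F}_N,Q_0}}$. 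I would fix a smooth cutoff $\Phi\in C_c^\infty(\Omega)$ with $\mathbf{1}_{\Omega_{\mathcal{F}_N,Q_0}}\le\Phi\le\mathbf{1}_{\Omega^*_{\mathcal{F}_N,Q_0}}$ and $|\nabla\Phi|\lesssim\delta^{-1}$, which is standard since the transition layer is made of fattened Whitney cubes. Ellipticity $|\nabla u|^2\le\Lambda\,A\nabla u\cdot\nabla u$ and the bounded overlap of the Whitney regions reduce matters to bounding $\iint \Phi\,\mathcal{G}_0\,A\nabla u\cdot\nabla u\,dX$.

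The key identity comes from combining $A\nabla u\cdot\nabla u=\div(uA\nabla u)$ (from $Lu=0$) with $uA\nabla u\cdot\nabla\mathcal{G}_0=\tfrac12\div(u^2 A^\top\nabla\mathcal{G}_0)$ (from $L^\top\mathcal{G}_0=0$ on $\supp\Phi$), which gives
$$
\Phi\mathcal{G}_0\,A\nabla u\cdot\nabla u
=\div\!\Big(\Phi\mathcal{G}_0 u A\nabla u-\tfrac12\Phi u^2 A^\top\nabla\mathcal{G}_0\Big)
-\mathcal{G}_0\,u A\nabla u\cdot\nabla\Phi
+\tfrac12 u^2 A^\top\nabla\mathcal{G}_0\cdot\nabla\Phi.
$$
Since $\Phi$ is compactly supported in $\Omega$, integrating kills the divergence and yields
$$
\iint \Phi\mathcal{G}_0\,A\nabla u\cdot\nabla u\,dX
=-\iint \mathcal{G}_0\,u A\nabla u\cdot\nabla\Phi\,dX
+\tfrac12\iint u^2 A^\top\nabla\mathcal{G}_0\cdot\nabla\Phi\,dX.
$$

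Both right-hand integrals are supported in the transition layer $\supp(\nabla\Phi)\subset\Omega^*_{\mathcal{F}_N,Q_0}\setminus\Omega_{\mathcal{F}_N,Q_0}$, which is a union of outer shells of fattened Whitney regions $U_Q^*$ attached to a collection $\mathcal{E}\subset\mathbb{D}_{\mathcal{F}_N,Q_0}$ of boundary cubes of the sawtooth. On $U_Q^*$ with $Q\in\mathcal{E}$, one has $\delta\approx\ell(Q)$, $\mathcal{G}_0\lesssim\ell(Q)$, $|\nabla\Phi|\lesssim\ell(Q)^{-1}$, and Caccioppoli applied to $u$ (using $\|u\|_\infty\le 1$) and to $\mathcal{G}_0$ (using $\mathcal{G}_0\lesssim\ell(Q)$ on $U_Q^{**}$) gives
$$
\|\nabla u\|_{L^2(U_Q^*)}\lesssim \ell(Q)^{(n-1)/2},
\qquad
\|\nabla\mathcal{G}_0\|_{L^2(U_Q^*)}\lesssim \ell(Q)^{(n+1)/2}.
$$
Cauchy--Schwarz on each integral followed by summation over $\mathcal{E}$ bounds the right-hand side by $C\sum_{Q\in\mathcal{E}}\ell(Q)^n\approx C\sum_{Q\in\mathcal{E}}\sigma(Q)$. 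The cubes in $\mathcal{E}$ are either finitely many at the top scale $\approx\ell(Q_0)$ or dyadically adjacent to some $Q_j\in\mathcal{F}_N$; since $\mathcal{F}_N$ is pairwise disjoint inside $Q_0$, a bounded-overlap argument plus Ahlfors regularity yields $\sum_{Q\in\mathcal{E}}\sigma(Q)\lesssim \sigma(Q_0)$ with a constant independent of $N$, completing the proof.

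The principal difficulty is the mixed identity above: in the symmetric setting one could use $L\mathcal{G}_0=0$ to rewrite $\mathcal{G}_0\,A\nabla u\cdot\nabla u$ as a single divergence, but here neither $L\mathcal{G}_0=0$ nor $L^\top u=0$ is available, so one must interleave $Lu=0$ (to differentiate $u$) with $L^\top\mathcal{G}_0=0$ (to differentiate $\mathcal{G}_0$). The transpose $A^\top$ in the second integral is the natural price of this non-symmetry and matches the sense in which $\mathcal{G}_0$ is $L^\top$-harmonic away from $X_0$.
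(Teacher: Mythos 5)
Your proposal is correct and follows essentially the same route as the paper: an adapted cutoff supported in $\Omega^*_{\mathcal{F}_N,Q_0}$, integration by parts exploiting both $Lu=0$ (tested against $u\,\mathcal{G}_0\,\Phi$) and $L^\top\mathcal{G}_0=0$ (tested against $u^2\Phi$, which is exactly where $A^\top$ enters), followed by Caccioppoli and the comparison $\mathcal{G}_0\approx\delta$ from \eqref{G-delta} on the transition layer, and the sawtooth-boundary counting estimate $\sum\ell(I)^n\lesssim\sigma(Q_0)$. The only differences are presentational: you phrase the key identity as a pointwise divergence (which should be justified in the weak formulation, since $u$ and $\mathcal{G}_0$ are only $W^{1,2}_{\rm loc}$ solutions), and the paper quotes the transition-layer measure bound from the adapted cutoff lemma of \cite{HMT-var} rather than re-deriving it.
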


Assuming this result momentarily, \eqref{propsawtooth2} and the construction of $\mathcal{F}_N$ give \eqref{hyp-pdelemma:I}. Next, we combine \eqref{cmegreen}, \eqref{TCM} and \eqref{conc-pdelemma:I} to conclude \eqref{sawtoothcarleson}. This completes the proof of $(b)\Longrightarrow (a)$ Theorem \ref{theor:cme-implies-ainf},
modulo obtaining the just stated proposition.
\end{proof}

\begin{proof}[Proof of Proposition \ref{pdelemma:I}]

	We introduce an adapted cut-off function which can be obtained from a straightforward modification of \cite[Lemma 4.44]{HMT-var} by simply replacing $\lambda$ by $2\lambda$ (recall that $\lambda$ appearing in Section \ref{sec2.3} can be chosen arbitrarily small).

\begin{lemma}\label{adaptedcutoff}
		There exists $\Psi_N\in C_c^{\infty}(\re^{n+1})$ such that
		\begin{list}{$(\theenumi)$}{\usecounter{enumi}\leftmargin=1cm \labelwidth=1cm \itemsep=0.1cm \topsep=.2cm \renewcommand{\theenumi}{\alph{enumi}}}
			\item $\mathbf{1}_{\Omega_{\mathcal{F}_N,Q_0}}\lesssim\Psi_N\leq\mathbf{1}_{\Omega^{*}_{\mathcal{F}_N,Q_0}}$.
			\item $\sup_{X\in\Omega}|\nabla\Psi_N(X)|\delta(X)\lesssim 1$.
			\item Set
			$$
			\mathcal{W}_N:=\bigcup_{Q\in\mathbb{D}_{\mathcal{F}_N,Q_0}}\mathcal{W}_Q^*,\qquad
			\mathcal{W}_N^\Sigma:=\big\{I\in\mathcal{W}_N:\:\exists J\in\mathcal{W}\setminus\mathcal{W}_N\quad\text{with}\quad\partial I\cap\partial J\neq\emptyset\big\}.
			$$
			Then
			\begin{equation}\label{tt-9}
			\nabla\Psi_N\equiv 0\quad\text{in}\quad\bigcup_{I\in\mathcal{W}_N\setminus\mathcal{W}_N^\Sigma}I^{**}\qquad\text{and}\qquad\sum_{I\in\mathcal{W}_N^{\Sigma}}\ell(I)^n\lesssim\sigma(Q_0),
			\end{equation}
			with implicit constants depending only on the allowable parameters but uniform in $N$.
		\end{list}
	\end{lemma}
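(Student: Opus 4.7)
The plan is to build $\Psi_N$ as a partial sum of a smooth partition of unity subordinate to the fattened Whitney decomposition. First I fix a family $\{\varphi_I\}_{I\in\mathcal{W}}$ with each $\varphi_I\in C_c^\infty(\re^{n+1})$ satisfying $\mathbf{1}_{I^{*}}\lesssim \varphi_I\le \mathbf{1}_{I^{**}}$, $|\nabla\varphi_I|\lesssim \ell(I)^{-1}$, and $\sum_{I\in\mathcal{W}}\varphi_I\equiv 1$ on $\Omega$. Such a family exists because, thanks to the choice $\lambda<\lambda_0/4$, the fattened cubes $\{I^{**}\}_{I\in\mathcal{W}}$ cover $\Omega$ with uniformly bounded overlap, and $I^{**}\cap J^{**}\neq \emptyset$ only when $\partial I\cap \partial J\neq\emptyset$. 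Then I define
\[
\Psi_N(X)\;:=\;\sum_{I\in\mathcal{W}_N}\varphi_I(X),\qquad X\in\re^{n+1}.
\]

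Property (a) is then immediate: if $X\in\Omega_{\mathcal{F}_N,Q_0}$, by \eqref{defomegafq} there is $I\in\mathcal{W}_N$ with $X\in I^{*}$, so $\Psi_N(X)\ge \varphi_I(X)\gtrsim 1$; conversely $\supp\Psi_N\subset\bigcup_{I\in\mathcal{W}_N}I^{**}\subset\overline{\Omega^{*}_{\mathcal{F}_N,Q_0}}$ and $\Psi_N\le \sum_{I\in\mathcal{W}}\varphi_I\equiv 1$. For (b), at each $X\in\Omega$ bounded overlap produces at most $C$ indices $I$ with $X\in I^{**}$, each of which satisfies $\ell(I)\approx\delta(X)$, whence $|\nabla\Psi_N(X)|\le\sum_I|\nabla\varphi_I(X)|\lesssim\delta(X)^{-1}$.

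The first half of (c) follows from the key structural observation: if $I\in\mathcal{W}_N\setminus\mathcal{W}_N^\Sigma$, then every Whitney cube $J$ with $J^{**}\cap I^{**}\neq\emptyset$ must satisfy $\partial I\cap\partial J\neq \emptyset$, and hence $J\in\mathcal{W}_N$ by the definition of $\mathcal{W}_N^\Sigma$. Consequently on $I^{**}$ we have $\Psi_N=\sum_{J\in\mathcal{W}}\varphi_J\equiv 1$, so $\nabla\Psi_N\equiv 0$ there.

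The main difficulty, and the only genuinely non-trivial ingredient adapted from \cite[Lemma 4.44]{HMT-var}, is the packing bound $\sum_{I\in\mathcal{W}_N^\Sigma}\ell(I)^n\lesssim \sigma(Q_0)$ uniformly in $N$. To prove it, for each $I\in\mathcal{W}_N^\Sigma$ I would select a witness $J=J(I)\in\mathcal{W}\setminus\mathcal{W}_N$ with $\partial I\cap\partial J\neq\emptyset$, so $\ell(J)\approx\ell(I)$. The associated dyadic cube $Q_J$ (for which $J\in\mathcal{W}_{Q_J}^*$) has $\ell(Q_J)\approx\ell(I)$ but lies outside $\mathbb{D}_{\mathcal{F}_N,Q_0}$, which forces $Q_J$ to sit either inside some stopping cube $Q_*\in\mathcal{F}_N$ or outside $Q_0$. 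Projecting $I$ to $\partial\Omega$ yields a surface point within distance $\lesssim\ell(I)$ of $\partial Q_*$ (for some $Q_*\in\mathcal{F}_{Q_0}$) or of $\partial Q_0$. Grouping the $I$'s by the corresponding $Q_*$, using Ahlfors regularity to replace $\ell(I)^n\approx \sigma(\Delta(x_I,\ell(I)))$, and exploiting that for each fixed $Q_*$ these surface balls have bounded overlap and live in a fixed-scale neighbourhood of $\partial Q_*\cup\partial Q_0$, one obtains $\sum_{I\in\mathcal{W}_N^\Sigma}\ell(I)^n\lesssim \sum_{Q_*\in\mathcal{F}_{Q_0}}\sigma(Q_*)+\sigma(Q_0)\lesssim \sigma(Q_0)$; the bound is independent of $N$ because the extra cubes added when passing from $\mathcal{F}_{Q_0}$ to $\mathcal{F}_N$ only contribute at scales $\lesssim 2^{-N}\ell(Q_0)$, and the same telescoping packing works there by Ahlfors regularity. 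Making this covering/packing argument rigorous, with careful attention to the interplay between Whitney and dyadic scales, is the step I expect to require the most care.
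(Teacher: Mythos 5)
Your construction of $\Psi_N$ as a partial sum of a Whitney partition of unity is exactly the mechanism behind the result the paper cites ([HMT-var, Lemma 4.44]; the paper itself only quotes that lemma rather than reproving it), and your treatment of (a), (b) and the first half of (c) is correct: bounded overlap of the $I^{**}$, the equivalence $I^{**}\cap J^{**}\neq\emptyset \iff \partial I\cap\partial J\neq\emptyset$ guaranteed by the choice $\lambda<\lambda_0/4$, and the fact that $\Psi_N\equiv\sum_{J\in\mathcal{W}}\varphi_J\equiv 1$ near any $I\in\mathcal{W}_N\setminus\mathcal{W}_N^\Sigma$ are all that is needed there.

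The gap is in the packing estimate $\sum_{I\in\mathcal{W}_N^\Sigma}\ell(I)^n\lesssim\sigma(Q_0)$, which is the only substantive part of the lemma. Your reduction is fine up to the point where, for a fixed stopping cube $Q_*$, you must sum $\sigma(\Delta(x_I,\ell(I)))$ over all $I$ assigned to $Q_*$. These $I$ occur at \emph{every} dyadic scale between $2^{-N}\ell(Q_0)$ and $\ell(Q_*)$, so the surface balls do \emph{not} have bounded overlap across scales (a point of the topological boundary of $Q_*$ lies in one such ball per scale), and at each single scale $2^{-k}$ the contribution can a priori be as large as $\sigma(Q_*)$; summing naively gives $\approx N\,\sigma(Q_*)$, which is not uniform in $N$. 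Ahlfors regularity and fixed-scale bounded overlap cannot repair this. The missing ingredient is the ``small boundary'' property $(f)$ of Lemma \ref{dyadiccubes}: since each $x_I$ at scale $2^{-k}$ lies within $C2^{-k}$ of both $Q_*$ and $\partial\Omega\setminus Q_*$ (and similarly for $Q_0$ in your case (i)), property $(f)$ gives $\sigma\big(\{y:\dist(y,Q_*)\le C2^{-k},\ \dist(y,\partial\Omega\setminus Q_*)\le C2^{-k}\}\big)\lesssim (2^{-k}/\ell(Q_*))^{\eta}\sigma(Q_*)$, and the resulting geometric series in $k$ is what makes the sum over scales converge to $\lesssim\sigma(Q_*)$, hence to $\lesssim\sum_{Q_*}\sigma(Q_*)+\sigma(Q_0)\lesssim\sigma(Q_0)$ after summing over the pairwise disjoint family. (The cubes added when passing from $\mathcal{F}_{Q_0}$ to $\mathcal{F}_N$ are harmless, as you say, precisely because they involve only $O(1)$ scales each.) Without invoking $(f)$ your argument does not close.
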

	\medskip

Taking then $\Psi_N$ as above, Leibniz's rule leads us to
\begin{multline}\label{leibniz:I}
A\nabla u\cdot\nabla u\,\mathcal{G}_0\,\Psi_N^2
=
A\nabla u\cdot\nabla(u\,\mathcal{G}_0\,\Psi_N^2)
-\tfrac{1}{2}A\nabla(u^2\,\Psi_N^2)\cdot\nabla\,\mathcal{G}_0
\\
+\tfrac{1}{2}A\nabla(\Psi_N^2)\cdot\nabla\mathcal{G}_0\,u^2-\tfrac{1}{2}A\nabla (u^2)\cdot\nabla(\Psi_N^2)\,\mathcal{G}_0.
\end{multline}
	
	Note that $u\,\mathcal{G}_0\,\Psi_N^2\in W^{1,2}_0(\Omega^{**}_{\mathcal{F}_N,Q_0})$ since $\overline{\Omega^{**}_{\mathcal{F}_N,Q_0}}$ is a compact subset of $\Omega$ (indeed by construction $\dist(\overline{\Omega^{**}_{\mathcal{F}_N,Q_0}},\partial\Omega)\gtrsim 2^{-N}\ell(Q_0)$), $u\in W^{1,2}_{\rm loc}(\Omega)\cap L^\infty(\Omega)$, $\mathcal{G}_0\in W^{1,2}_{\rm loc}(\Omega\setminus \{X_0\})$, $\overline{\Omega^{**}_{\mathcal{F}_N,Q_0}}\subset\overline{T_{Q_0}^{**}}\subset \frac12 B_{Q_0}^*$ (cf. \eqref{definicionkappa12}), and \eqref{eq:X0-TQ}. Moreover, since $u\in W^{1,2}_{\rm loc}(\Omega)$ it follows that $u\in  W^{1,2}(\Omega^{**}_{\mathcal{F}_N,Q_0})$. All these plus the fact that $L u=0$ in the weak sense in $\Omega$ easily give
	\begin{equation}\label{vanish:I}
	\iint_{\Omega}A\nabla u\cdot\nabla(u\,\mathcal{G}_0\Psi_N^2)\,dX
	=
	\iint_{\Omega^{**}_{\mathcal{F}_N,Q_0}}A\nabla u\cdot\nabla(u\,\mathcal{G}_0\Psi_N^2)\,dX
	=
	0.
	\end{equation}

	Moreover as above $u^2\,\Psi_N^2\in W^{1,2}_0(\Omega^{**}_{\mathcal{F}_N,Q_0})$. Also, Lemma \ref{lemagreen} (see in particular \eqref{greenpole}) gives at once that $\mathcal{G}_0\in W^{1,2}(\Omega^{**}_{\mathcal{F}_N,Q_0})$ and $L^\top\mathcal{G}_0=0$ in the weak sense in $\Omega\setminus\{X_0\}$. Thus, we easily obtain
	\begin{equation}\label{vanish2:I}
	\iint_{\Omega}A\nabla(u^2\,\Psi_N^2)\cdot\nabla\mathcal{G}_0\,dX
	=
	\iint_{\Omega^{**}_{\mathcal{F}_N,Q_0}} A^\top \nabla\mathcal{G}_0\cdot\nabla(u^2\,\Psi_N^2)\,dX
	=
	0.
	\end{equation}

	Using ellipticity, \eqref{leibniz:I}, \eqref{vanish:I}, \eqref{vanish2:I}, the fact that $\|u\|_{L^{\infty}(\Omega)}=1$, and Lemma \ref{adaptedcutoff}, we have
	\begin{multline}\label{firstbound:I}
	\iint_{\Omega_{\mathcal{F}_N,Q_0}}|\nabla u|^2\,\mathcal{G}_0\,dX\leq
	\iint_{\Omega}|\nabla u|^2\,\mathcal{G}_0\,\Psi_N^2\,dX
	\lesssim
	\iint _{\Omega}A\nabla u\cdot\nabla u\,\mathcal{G}_0\,\Psi_N^2\,dX
	\\
	\lesssim
	\iint_{\Omega}\Big(|\nabla\mathcal{G}_0|+|\nabla u|\,\mathcal{G}_0\Big)\,|\nabla\Psi_N|\,\Psi_N\,dX
	=:
	\mathrm{I}.
	\end{multline}
	To estimate $\mathrm{I}$ we use Lemma \ref{adaptedcutoff}, Caccioppoli's and Harnack's inequalities, and the fact that $\|u\|_{L^{\infty}(\Omega)}=1$:
\begin{equation}\label{acotI:1}
	\mathrm{I}\, 
	\lesssim
	\,\sum_{I\in\mathcal{W}_N^\Sigma}\ell(I)^{-1}\bigg(\iint_{I^{**}}|\nabla\mathcal{G}_0|\,dX+\iint_{I^{**}}|\nabla u|\,\mathcal{G}_0\,dX\bigg)\\
	\lesssim
	\sum_{I\in\mathcal{W}_N^\Sigma}\ell(I)^{n-1}\mathcal{G}_0(X_I),
\end{equation}
	where $X_I$ is the center of $I$. Note that for every $I\in\mathcal{W}_N^\Sigma$ there is $Q\in\mathbb{D}_{\mathcal{F}_N,Q_0}$ such that $I\in\mathcal{W}_Q^*$. Hence we can use \eqref{G-delta} and \eqref{tt-9} to obtain
	\begin{equation}\label{acot-I}
	\mathrm{I}\,\lesssim\,\sum_{I\in\mathcal{W}_N^\Sigma}\ell(I)^{n-1}\mathcal{G}_0(X_I)\lesssim\sum_{I\in\mathcal{W}_N^\Sigma} \ell(I)^n
	\lesssim\sigma(Q_0).
	\end{equation}
	Plugging \eqref{acot-I} into \eqref{firstbound:I} we get \eqref{conc-pdelemma:I} and complete the proof of Lemma \ref{pdelemma:I}.
\end{proof}

\section{Proof of Theorems \ref{theor:perturbation-ainf-cme} and  \ref{theor:perturbation-ainf-cme-A-At}}\label{section:Proof-Perturb}

We will prove Theorems \ref{theor:perturbation-ainf-cme} and  \ref{theor:perturbation-ainf-cme-A-At} by showing that all bounded weak solutions satisfy the Carleson measure estimate \eqref{cmeestimate}, in which case Theorem \ref{theor:cme-implies-ainf} will give the $A_\infty$ properties. First we prove an integration by parts identity.

\begin{lemma}\label{int-parts}
	Let $D=(d_{i,j}\big)_{i,j=1}^{n+1}\in L^{\infty}(\Omega)\cap {\rm Lip}_{\rm loc}(\Omega) $ be an antisymmetric real matrix and set for $X\in\Omega$
\begin{equation}\label{def-B0:gral}
\div_C D(X):=
\big(\div\big(d_{\cdot,j}(X))\big)_{1\le j\le n+1}
=
\bigg(\sum_{i=1}^{n+1}\partial_{i} d_{i,j}(X)\bigg)_{1\leq j\leq n+1},
\end{equation}
which is the vector formed by taking the divergence operator acting on the columns of $D$.
Then,
	\begin{equation}\label{int-parts-equal}
	\iint_{\Omega}D(X)\nabla u(X)\cdot\nabla v(X)\, dX
	=
	-\iint_{\Omega}\div_C D(X)\cdot\nabla u(X)\,v(X)\,dX,
	\end{equation}
for every $u\in W^{1,2}_{\rm loc}(\Omega)$ and every $v\in W^{1,2}(\Omega)$ such that $K=\supp(v)\subset\Omega$ is compact.
\end{lemma}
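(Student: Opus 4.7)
The plan is to prove the identity by first establishing it for a smooth approximation $u_\varepsilon$ of $u$, where the classical calculation works directly, and then passing to the limit. The key conceptual fact is that for an antisymmetric matrix $D$ with locally Lipschitz entries and a smooth function $w$, one has the pointwise identity $\div(D\nabla w) = \div_C D\cdot\nabla w$, because the second-order term $\sum_{i,j}d_{i,j}\partial_i\partial_j w$ vanishes by antisymmetry of $d_{i,j}$ and Schwarz's theorem. The identity in the statement is then the distributional/weak formulation of this fact paired against $v$.

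More precisely, let $K=\supp(v)\Subset\Omega$, pick an open set $\Omega'$ with $K\Subset\Omega'\Subset\Omega$, set $\eta:=\tfrac12\dist(K,\partial\Omega')>0$, and let $\{\phi_\varepsilon\}_{0<\varepsilon<\eta}$ be a standard mollifier. For $u\in W^{1,2}_{\rm loc}(\Omega)$ the convolution $u_\varepsilon:=u*\phi_\varepsilon$ is defined and $C^\infty$ on a neighborhood of $\overline{\Omega'}$, and $u_\varepsilon\to u$ in $W^{1,2}(\Omega')$ as $\varepsilon\to 0^+$. Since $v\in W^{1,2}(\Omega)$ has compact support $K\subset\Omega'$, we have $v\in W^{1,2}_0(\Omega')$, and since each $d_{i,j}\in L^\infty(\Omega)\cap {\rm Lip}_{\rm loc}(\Omega)$, we get $d_{i,j}\,\partial_j u_\varepsilon\in W^{1,2}(\Omega')$ with weak derivative $\partial_i(d_{i,j}\,\partial_j u_\varepsilon)=(\partial_i d_{i,j})\,\partial_j u_\varepsilon+d_{i,j}\,\partial_i\partial_j u_\varepsilon$. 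Hence we may integrate by parts to obtain, for each pair $(i,j)$,
\begin{equation*}
\iint_\Omega d_{i,j}\,\partial_j u_\varepsilon\,\partial_i v\,dX
=-\iint_\Omega (\partial_i d_{i,j})\,\partial_j u_\varepsilon\,v\,dX
-\iint_\Omega d_{i,j}\,\partial_i\partial_j u_\varepsilon\,v\,dX.
\end{equation*}
Summing in $i,j$, the first term on the right equals $-\iint_\Omega \div_C D\cdot\nabla u_\varepsilon\,v\,dX$ by the definition \eqref{def-B0:gral}, while the second term vanishes because $\sum_{i,j}d_{i,j}(X)\,\partial_i\partial_j u_\varepsilon(X)=0$ pointwise: indeed, by the antisymmetry $d_{i,j}=-d_{j,i}$ together with $\partial_i\partial_j u_\varepsilon=\partial_j\partial_i u_\varepsilon$, relabeling yields $\sum_{i,j}d_{i,j}\partial_i\partial_j u_\varepsilon=-\sum_{i,j}d_{i,j}\partial_i\partial_j u_\varepsilon$. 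Therefore
\begin{equation*}
\iint_\Omega D\nabla u_\varepsilon\cdot\nabla v\,dX
=-\iint_\Omega \div_C D\cdot\nabla u_\varepsilon\,v\,dX.
\end{equation*}

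Finally, we let $\varepsilon\to 0^+$. Both integrands are supported in $K\Subset\Omega'$, on which $D\in L^\infty$ and $\div_C D\in L^\infty$ (since $D$ is Lipschitz on a neighborhood of $\overline{\Omega'}$). Using $\nabla u_\varepsilon\to\nabla u$ in $L^2(\Omega')$ together with $\nabla v,v\in L^2(\Omega')$ and Cauchy--Schwarz, both sides converge to their natural limits, giving \eqref{int-parts-equal}. The argument is essentially routine; the only delicate point is ensuring that one may appeal to the antisymmetry cancellation, which requires $u$ to have pointwise symmetric second derivatives, and this is exactly what the mollification step provides.
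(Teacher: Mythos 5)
Your proof is correct and follows essentially the same route as the paper's: both reduce to the case where $u$ is smooth, where the antisymmetry of $D$ kills the second-order term $\sum_{i,j}d_{i,j}\,\partial_i\partial_j u$ so that $\div(D\nabla u)=\div_C D\cdot\nabla u$, and then conclude by a density/approximation argument. The only (immaterial) differences are that you mollify $u$ alone and integrate by parts directly against $v\in W^{1,2}_0(\Omega')$, whereas the paper approximates both $u\Phi_K$ and $v$ by smooth compactly supported functions, and that your $\eta$ should also be taken smaller than $\dist(\overline{\Omega'},\partial\Omega)$ so that $u*\phi_\varepsilon$ is defined on a neighborhood of $\overline{\Omega'}$ --- a trivial adjustment.
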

\begin{proof}
We first consider the case $u,v\in C_c^{\infty}(\Omega)$. Using Leibniz's rule and the fact that $D$ is antisymmetric we have that
\[
\div(D\nabla u)
=
\sum_{i=1}^{n+1}\sum_{j=1}^{n+1}\partial_i d_{i,j}\partial_j u+\sum_{i=1}^{n+1}\sum_{j=1}^{n+1} d_{i,j}\partial_i\partial_j u
=
\div_C D\cdot \nabla u.
\]
Using this we integrate by parts to obtain
\[
\iint_{\Omega}D \nabla u\cdot\nabla v\, dX
=
-\iint_{\Omega} \div(D\nabla u)\, v\, dX
=
-\iint_{\Omega}\div_C D \cdot\nabla u\,v\,dX.
\]
	
	To obtain the general case let $u\in W^{1,2}_{\rm loc}(\Omega)$ and $v\in W^{1,2}(\Omega)$ such that $K=\supp(v)\subset\Omega$ is compact. It is standard to see, using for instance the Whitney covering, that we can find $\Phi_K\in C_c^\infty(\Omega)$ so that $\Phi_K\equiv 1$ in $K$. Write $K^*=\supp(\Phi_K)$ which is a compact subset of $\Omega$ and define
	\[
	U:=\{X\in\Omega: \dist(X,K^*)<\dist(K^*,\partial\Omega)/2\}
	\]
	which satisfies $\dist(\overline{U},\partial\Omega)\ge\dist(K^*,\partial\Omega)/2>0$, hence $\overline{U}$ it is also a compact subset of $\Omega$. Since $u\in W^{1,2}_{\rm loc}(\Omega)$ we clearly have that $u\Phi_K\in W^{1,2}_0(U)$ and hence we can find $\{u_j\}_j\subset C^\infty_c(U)$ so that $u_j\to u\Phi_K$ in $W^{1,2}(U)$. Also, since  $v\in W^{1,2}(\Omega)$ verifies $K=\supp(v)\subset\Omega$ it is also easy to see that $v\in W^{1,2}_0(U)$ and hence we can find $\{v_j\}_j\subset C^\infty_c(U)$ so that $v_j\to v$ in $W^{1,2}(U)$. Notice that extending the $u_j$'s and $v_j$'s as $0$ outside of $U$ one sees that $\{u_j\}_j, \{v_j\}_j\subset C^\infty_c(\Omega)$. Thus, we can use \eqref{int-parts-equal}  and for every $j$
	\begin{equation}\label{conc-smooth:j}
	\iint_{\Omega}D\nabla u_j\cdot\nabla v_j\, dX
	=
-	\iint_{\Omega}\div_C D \cdot\nabla u_j\,v_j\,dX.
	\end{equation}
	Note that using that $\supp(v_j),\supp(v)=K\subset U$ and that $\Phi_K\equiv 1$ in $K\subset U$ we have
	\begin{align}\label{tt-3A}
	&\Big|\iint_{\Omega} D\nabla u\cdot\nabla v\, dX
	-
	\iint_{\Omega}D\nabla u_j\cdot\nabla v_j\, dX\Big|
	\\
	&\qquad=
	\Big|\iint_{\Omega} D\nabla (u\Phi_K)\cdot\nabla v\, dX
	-
	\iint_{\Omega}D\nabla u_j\cdot\nabla v_j\, dX\Big|
	\\
	&\qquad\le
	\|D\|_{L^\infty(\Omega)}
	\big(
	\|\nabla  (u\Phi_K)\|_{L^2(U)}\|\nabla v-\nabla v_j\|_{L^2(U)}
	\\
	&\hskip4cm	
	+\|\nabla (u\Phi_K)-\nabla u_j\|_{L^2(U)}\|\nabla v_j\|_{L^2(U)}\big),
	\end{align}
	and the last term converges to $0$ as $j\to\infty$ since $D\in L^\infty(\Omega)$ and the $v_j$'s are uniformly bounded in $W^{1,2}(U)$. Analogously,
	\begin{align}\label{tt-4A}
	&\Big|
	\iint_{\Omega} \div_C D \cdot\nabla u\,v\,dX-
	\iint_{\Omega} \div_C D \cdot\nabla u_j\,v_j\,dX\Big|
	\\
	&\qquad\qquad
	=
	\Big|
	\iint_{\Omega} \div_C D \cdot\nabla (u\Phi_K))\,v\,dX-
	\iint_{\Omega}\div_C D \cdot\nabla u_j\,v_j\,dX\Big|
	\\
	&\qquad\qquad\le
	\|\nabla D\|_{L^\infty(U)}
	\big(
	\|\nabla (u\Phi_K)\|_{L^2(U)}\|v-v_j\|_{L^2(U)}
	\\
	&\hskip4cm
	+\|\nabla (u\Phi_K)-\nabla u_j\|_{L^2(U)}\|v_j\|_{L^2(U)}\big),
	\end{align}
	which also converges to 0 as $j\to\infty$ since  $D\in{\rm Lip}_{\rm loc}(\Omega)$ and the $v_j$'s are uniformly bounded in $W^{1,2}(U)$. Combining \eqref{tt-3A}, \eqref{tt-4A} and \eqref{conc-smooth:j} yields \eqref{int-parts-equal}.
\end{proof}
\medskip

We show that Theorems \ref{theor:perturbation-ainf-cme} and  \ref{theor:perturbation-ainf-cme-A-At} follow from the following more general result which is interesting on its own right:

\begin{theorem}\label{theor:perturbation-general}
	Let $\Omega\subset\re^{n+1}$, $n\ge 2$, be a 1-sided $\mathrm{CAD}$ (cf. Definition \ref{def-1CAD}). Let $L_1u=-\div(A_1\nabla u)$ and $L_0u=-\div(A_0\nabla u)$ be real (not necessarily symmetric) elliptic operators (cf. Definition \ref{ellipticoperator}). Suppose that $A_0-A_1=A+D$ where $A, D\in L^\infty(\Omega)$ are real matrices satisfying the following conditions:
\begin{list}{$(\theenumi)$}{\usecounter{enumi}\leftmargin=.8cm
		\labelwidth=.8cm\itemsep=0.2cm\topsep=.1cm
		\renewcommand{\theenumi}{\roman{enumi}}}

\item Define for $X\in\Omega$
\begin{equation}\label{discrepancia-gral}  
a(X):=\sup_{Y\in B(X,\delta(X)/2)}|A(Y)|,
\end{equation}
where $\delta(X):=\dist(X,\partial\Omega)$,  and assume that it satisfies the Carleson measure condition
\begin{equation}\label{eq:defi-vertiii:gral}
C_A:=\sup_{\substack{x\in\partial\Omega \\ 0<r<\diam(\partial\Omega)}}\frac{1}{\sigma(B(x,r)\cap\partial\Omega)}\iint_{B(x,r)\cap\Omega}\frac{a(X)^2}{\delta(X)}\,dX<\infty.
\end{equation}	

\item $D\in {\rm Lip}_{\rm loc}(\Omega)$ is antisymmetric and suppose that $\div_C D$ defined in \eqref{def-B0:gral} satisfies the Carleson measure condition
\begin{equation}\label{carleson-B0-gral}
C_D :=\sup_{\substack{x\in\partial\Omega \\ 0<r<\diam(\partial\Omega)}} \frac{1}{\sigma(B(x,r)\cap\partial\Omega)}
\iint_{B(x,r)\cap\Omega}\big|\div_C D(X)\big|^2\delta(X)\,dX<\infty.
\end{equation}
\end{list}	
Then,  $\omega_{L_0}\in A_\infty(\partial\Omega)$ if and only if $\omega_{L_1}\in A_\infty(\partial\Omega)$ (cf. Definition \ref{defi:Ainfty}).  	 	
\end{theorem}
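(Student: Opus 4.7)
The plan is to reduce the biconditional to Theorem \ref{theor:cme-implies-ainf} by proving that every bounded weak solution of $L_1u=0$ satisfies the Carleson measure estimate \eqref{cmeestimate} whenever $\omega_{L_0}\in A_\infty(\partial\Omega)$. The hypotheses are symmetric under $L_0\leftrightarrow L_1$ (replace $(A,D)$ by $(-A,-D)$, which preserves $a(\cdot)$ and $|\div_C D|$), so one direction suffices. Fix $u\in W^{1,2}_{\rm loc}(\Omega)\cap L^\infty(\Omega)$ with $L_1u=0$, normalize $\|u\|_{L^\infty(\Omega)}=1$, and invoke Lemma \ref{lemmareduction} to reduce the proof to \eqref{sawtoothcarleson} for each $Q_0\in\mathbb{D}(\partial\Omega)$ with $\ell(Q_0)<\diam(\partial\Omega)/M_0$. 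Following Section \ref{section:Proof-CME:b->a} verbatim, the $A_\infty$ assumption on $\omega_{L_0}$ produces a stopping-time family $\mathcal{F}_{Q_0}$ with ample contact \eqref{amplecontact} and a normalized Green function $\mathcal{G}_0$ (see \eqref{normalizacion}) such that $\mathcal{G}_0\approx\delta$ throughout the associated Whitney regions (cf. \eqref{G-delta}). Through \eqref{cmegreen}--\eqref{TCM} with the cutoff $\Psi_N$ of Lemma \ref{adaptedcutoff} attached to $\mathcal{F}_N:=\mathcal{F}_{Q_0}(2^{-N}\ell(Q_0))$, matters reduce to establishing, uniformly in $N$,
\[
\mathcal{E}_N:=\iint_{\Omega}|\nabla u|^2\,\mathcal{G}_0\,\Psi_N^2\,dX\;\lesssim\;\sigma(Q_0),
\]
noting that $\mathcal{E}_N<\infty$ for each fixed $N$ because $\supp\Psi_N\Subset\Omega$.

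By ellipticity of $A_1$ and the Leibniz identity of \eqref{leibniz:I} (which remains valid for non-symmetric $A_1$ thanks to $u\nabla u=\tfrac12\nabla(u^2)$),
\[
\mathcal{E}_N\,\lesssim\,\iint A_1\nabla u\cdot\nabla(u\mathcal{G}_0\Psi_N^2)\,dX\,-\,\tfrac12\iint A_1\nabla(u^2\Psi_N^2)\cdot\nabla\mathcal{G}_0\,dX\,+\,\mathcal{R}_N,
\]
where $\mathcal{R}_N$ collects the terms involving $\nabla\Psi_N$ and is supported in $\bigcup_{I\in\mathcal{W}_N^\Sigma}I^{**}$; exactly as in \eqref{acotI:1}--\eqref{acot-I}, $|\mathcal{R}_N|\lesssim\sigma(Q_0)$ via Lemma \ref{adaptedcutoff}(c) and Caccioppoli. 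The first integral vanishes because $L_1u=0$ and $u\mathcal{G}_0\Psi_N^2\in W^{1,2}_0(\Omega^{**}_{\mathcal{F}_N,Q_0})$. Substituting the decomposition $A_1=A_0-A-D$ into the second integral, the $A_0$-piece vanishes because $L_0^\top\mathcal{G}_0=0$ in the weak sense away from $X_0\in\Omega\setminus B_{Q_0}^*$ (via Lemma \ref{lemagreen} and the Green function symmetry), as in \eqref{vanish2:I}. Therefore
\[
\mathcal{E}_N\,\lesssim\,\tfrac12\iint A\nabla(u^2\Psi_N^2)\cdot\nabla\mathcal{G}_0\,dX+\tfrac12\iint D\nabla(u^2\Psi_N^2)\cdot\nabla\mathcal{G}_0\,dX+\sigma(Q_0).
\]

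For the $A$-integral, expand $\nabla(u^2\Psi_N^2)=2u\Psi_N^2\nabla u+u^2\nabla(\Psi_N^2)$. Using $|A(X)|\le a(X)$, Caccioppoli for $\mathcal{G}_0$ on Whitney cubes, and $\mathcal{G}_0\approx\delta$ on $\supp\Psi_N$, Cauchy--Schwarz combined with \eqref{eq:defi-vertiii:gral} yields for the interior piece
\[
\Big|\iint u\Psi_N^2\,A\nabla u\cdot\nabla\mathcal{G}_0\,dX\Big|\lesssim\Big(\iint_{B_{Q_0}^*\cap\Omega}\tfrac{a^2}{\delta}\,dX\Big)^{1/2}\mathcal{E}_N^{1/2}\lesssim(C_A\sigma(Q_0))^{1/2}\mathcal{E}_N^{1/2},
\]
while the $\nabla(\Psi_N^2)$-piece is localized to $\bigcup_{I\in\mathcal{W}_N^\Sigma}I^{**}$ and dominated by $\sigma(Q_0)$ through Lemma \ref{adaptedcutoff}(c). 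The $D$-integral is the truly new ingredient. Because $D$ is antisymmetric and $\mathcal{G}_0\in W^{1,2}$ on any region compactly contained in $\Omega\setminus\{X_0\}$, a cutoff version of Lemma \ref{int-parts} (multiply $\mathcal{G}_0$ by $\eta\in C_c^\infty(\Omega)$ with $\eta\equiv 1$ on $\supp\Psi_N$ and $\eta\equiv 0$ near $X_0$, so that $\nabla\eta$ is supported where $\nabla(u^2\Psi_N^2)$ vanishes) gives
\[
\iint D\nabla(u^2\Psi_N^2)\cdot\nabla\mathcal{G}_0\,dX=-\iint\div_C D\cdot\nabla(u^2\Psi_N^2)\,\mathcal{G}_0\,dX.
\]
Expanding once more and pairing by Cauchy--Schwarz with the crucial inequality $\mathcal{G}_0^2/\delta\lesssim\mathcal{G}_0$ on $\supp\Psi_N$ (a consequence of $\mathcal{G}_0\approx\delta$) together with \eqref{carleson-B0-gral},
\[
\Big|\iint\div_C D\cdot\nabla u\,u\Psi_N^2\,\mathcal{G}_0\,dX\Big|\lesssim\Big(\iint_{B_{Q_0}^*\cap\Omega}|\div_C D|^2\delta\,dX\Big)^{1/2}\mathcal{E}_N^{1/2}\lesssim(C_D\sigma(Q_0))^{1/2}\mathcal{E}_N^{1/2},
\]
while the $\nabla(\Psi_N^2)$-piece again localizes to $\mathcal{W}_N^\Sigma$ and, through a further Cauchy--Schwarz against $\sum_{I\in\mathcal{W}_N^\Sigma}\ell(I)^n\lesssim\sigma(Q_0)$, contributes $\lesssim\sigma(Q_0)$.

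Summing the three contributions and applying Young's inequality to absorb the two $\mathcal{E}_N^{1/2}$-terms (legal because $\mathcal{E}_N<\infty$) delivers the uniform bound $\mathcal{E}_N\lesssim(1+C_A+C_D)\sigma(Q_0)$. Monotone convergence as $N\to\infty$ then yields the sawtooth Carleson estimate \eqref{sawtoothcarleson}, and Theorem \ref{theor:cme-implies-ainf} concludes $\omega_{L_1}\in A_\infty(\partial\Omega)$. The main obstacle is the handling of the $D$-integral: the hypothesis only controls $\div_C D$ in the Carleson-weighted norm $L^2(\delta\,dX)$, so the pairing must land on $\mathcal{E}_N^{1/2}$ rather than $\sigma(Q_0)^{1/2}$; this forces the particular integration-by-parts arrangement of Lemma \ref{int-parts}, which (thanks to antisymmetry) transfers one derivative from $\mathcal{G}_0$ onto $D$, and it forces the use of the sharp size estimate $\mathcal{G}_0\lesssim\delta$ on the sawtooth that makes $\mathcal{G}_0^2/\delta\lesssim\mathcal{G}_0$ and closes the absorption.
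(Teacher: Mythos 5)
Your proof is correct and follows essentially the same route as the paper: reduction to the Carleson measure estimate via Theorem \ref{theor:cme-implies-ainf} and Lemma \ref{lemmareduction}, the stopping-time construction giving $\mathcal{G}_0\approx\delta$ on the sawtooth, the Leibniz/ellipticity identity with the two vanishing terms ($L_1u=0$ and $L_0^\top\mathcal{G}_0=0$), Cauchy--Schwarz against \eqref{eq:defi-vertiii:gral} for the $A$-part, the antisymmetric integration by parts of Lemma \ref{int-parts} combined with \eqref{carleson-B0-gral} for the $D$-part, and absorption by Young's inequality using the finiteness of the truncated integral. The only deviation is bookkeeping: you attach $\Psi_N^2$ to $u^2$ rather than to $\mathcal{G}_0$ in the error term, which forces the cutoff-$\eta$ variant of Lemma \ref{int-parts} and one extra $\nabla\Psi_N$ remainder, whereas the paper's arrangement $\iint_\Omega\mathcal{E}\nabla(u^2)\cdot\nabla(\mathcal{G}_0\Psi_N^2)\,dX$ lets the lemma apply directly with $v=\mathcal{G}_0\Psi_N^2$ compactly supported.
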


Assuming this result we can easily prove Theorems \ref{theor:perturbation-ainf-cme} and  \ref{theor:perturbation-ainf-cme-A-At}:

\begin{proof}[Proof of Theorem \ref{theor:perturbation-ainf-cme}]
For $L_0$ and $L_1$ as in the statement of Theorem \ref{theor:perturbation-ainf-cme}  we set $A=A_0-A_1$ and $D=0$. Thus, it suffices to check that $A$ and $D$ satisfy the required conditions in Theorem \ref{theor:perturbation-general}. For $(i)$ notice that
$a=\varrho(A_1, A_0)$ (cf. \eqref{discrepancia-gral} and \eqref{discrepancia}), hence \eqref{eq:defi-vertiii} gives immediately \eqref{eq:defi-vertiii:gral}. On the other hand since $D=0$ we clearly have all the conditions in $(ii)$. With all these in hand, Theorem \ref{theor:perturbation-general} gives at once the desired conclusion.
\end{proof}

\begin{proof}[Proof of Theorem \ref{theor:perturbation-ainf-cme-A-At}]
Set $A_0=A$, $A_1=A^\top$, $\widetilde{A}=0$ and $D=A-A^\top$ so that $A_0-A_1=\widetilde{A}+D$. As before we can easily see that $\widetilde{A}$ and $D$ satisfy the required conditions in Theorem \ref{theor:perturbation-general}. This time $(i)$ is trivial. For $(ii)$ notice that by assumption $D=A-A^\top\in{\rm Lip}_{\rm loc}(\Omega)$ and also that \eqref{carleson-B0} yields \eqref{carleson-B0-gral} since \eqref{def-B0} agrees with \eqref{def-B0:gral}. As a result, we can invoke Theorem \ref{theor:perturbation-general} obtaining that $\omega_{L}\in A_\infty(\partial\Omega)$ if and only if $\omega_{L^\top}\in A_\infty(\partial\Omega)$. 

On the other hand, if we let $A_0=A$, $A_1=A^{\rm sym}=\frac{A+A^\top}2$, $\widetilde{A}=0$ and $D=\frac{A-A^\top}2$ so that $A_0-A_1=\widetilde{A}+D$, the same argument yields that $\omega_{L}\in A_\infty(\partial\Omega)$ if and only if $\omega_{L^{\rm sym}}\in A_\infty(\partial\Omega)$.

\end{proof}

Besides the previous results one can easily get other interesting perturbation results from Theorem \ref{theor:perturbation-general}. For instance suppose that $L_0 u=-\div(A_0\nabla u)$ has an associated elliptic measure satisfying $\omega_{L_0}\in A_\infty(\partial\Omega)$. Let $D$ be a real antisymmetric matrix
with locally Lipschitz coefficients and assume that $\|D\|_{L^\infty(\Omega)}<\lambda_0 $ where $\lambda_0>0$ is so that $A(X)\xi\cdot\xi\ge \lambda_0\,|\xi|^2$ for all $\xi\in\re^{n+1}$ and a.e.~$X\in\Omega$. The latter ensures that $A_1=A_0+D$ is uniformly elliptic and hence if we assume that $\div_C D$ satisfies \eqref{carleson-B0-gral} then Theorem \ref{theor:perturbation-general} gives immediately that $\omega_{L_1}\in A_\infty(\partial\Omega)$ where $L_1 u=-\div(A_1\nabla u)$. In particular, the $A_\infty$ property is preserved under perturbations by antisymmetric ``sufficiently small'' matrices $D$ with locally Lipschitz coefficients so that $|\nabla D|^2\delta$ satisfies a Carleson measure condition.

\begin{proof}[Proof of Theorem \ref{theor:perturbation-general}]
By symmetry it suffices to assume that $\omega_{L_0}\in A_\infty(\partial\Omega)$ and prove that $\omega_{L_1}\in A_\infty(\partial\Omega)$. By Theorem \ref{theor:cme-implies-ainf} it suffices to show that given $u\in W^{1,2}_{\rm loc}(\Omega)\cap L^\infty(\Omega)$ with $L_1u=0$ in the weak sense in $\Omega$ then \eqref{cmeestimate} holds. As before, by homogeneity we may assume without loss of generality that $\|u\|_{L^\infty(\Omega)}=1$.  We can now follow closely the proof of $(b)\Longrightarrow (a)$ in Theorem \ref{theor:cme-implies-ainf} with the following changes. Here we are assuming that $\omega_{L_0}\in A_\infty(\partial\Omega)$ and hence \eqref{normalizacion} needs to be replaced by
\begin{equation}\label{normalizacion:II}
\omega_0:=C_0\,\sigma(Q_0)\omega_{L_0}^{X_0},\qquad\text{and}\qquad\mathcal{G}_0(\cdot):=C_0\,\sigma(Q_0)G_{L_0}(X_0,\cdot),
\end{equation}
where $X_0:=X_{M_0\Delta_{Q_0}}$ is chosen as before so that \eqref{eq:X0-TQ} holds.

Notice that in the present situation $u$ satisfies $L_1 u=0$ (as opposed to what happened above where both $u$ and $\mathcal{G}_0$ where associated with the same operator).
Other than that, and keeping in mind \eqref{normalizacion:II}, all estimates \eqref{propsawtooth2}--\eqref{TCM} hold.
Thus it is straightforward to see that everything reduces to obtain the following analog of Proposition \ref{pdelemma:I}:

\begin{proposition}\label{pdelemma:II}
Given $C_1\geq 1$, one can find $C$ such that if $\mathcal{F}_N\subset\mathbb{D}_{Q_0}$, $N\in\mathbb{N}$, is a family of pairwise disjoint dyadic cubes satisfying
\begin{equation}\label{hyp-pdelemma}
C_1^{-1}\leq\frac{\omega_0(Q)}{\sigma(Q)}\leq C_1\qquad\text{and}\qquad\ell(Q)>2^{-N}\ell(Q_0),\qquad\forall\, Q\in\mathbb{D}_{\mathcal{F}_N,Q_0},
\end{equation}
then
\begin{equation}\label{conc-pdelemma}
\iint_{\Omega_{\mathcal{F}_N,Q_0}}|\nabla u(X)|^2\mathcal{G}_0(X)\,dX\leq C\sigma(Q_0).
\end{equation}
Here, $C$ depends only on dimension, the 1-sided $\mathrm{CAD}$ constants, the ellipticity of $L_0$ and $L_1$, and on $C_A$ and $C_D$.
\end{proposition}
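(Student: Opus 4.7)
My plan is to mimic the argument for Proposition \ref{pdelemma:I}, using the same cutoff $\Psi_N$ from Lemma \ref{adaptedcutoff} and exploiting the ellipticity of $A_1$, then writing the Leibniz identity \eqref{leibniz:I} with $A_1$ in place of $A$. The first Leibniz term $\iint_\Omega A_1 \nabla u \cdot \nabla(u\mathcal{G}_0 \Psi_N^2)\, dX$ vanishes because $L_1 u = 0$ and the test function has compact support in $\Omega$. The third and fourth (cutoff) Leibniz terms are supported on $\bigcup_{I \in \mathcal{W}_N^\Sigma} I^{**}$ where $|\nabla \Psi_N| \lesssim \delta^{-1}$; since $A_1 \in L^\infty$, they are bounded by $\sigma(Q_0)$ exactly as in \eqref{acotI:1}--\eqref{acot-I}. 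The essential new ingredient is the second Leibniz term $\mathrm{J} := \tfrac{1}{2}\iint_\Omega A_1 \nabla(u^2 \Psi_N^2) \cdot \nabla \mathcal{G}_0\, dX$, which no longer vanishes because $L_1 \neq L_0$. I would decompose $A_1 = A_0 - A - D$; the $A_0$-piece vanishes by $L_0^\top \mathcal{G}_0 = 0$ in the weak sense on $\Omega \setminus \{X_0\}$, exactly as in \eqref{vanish2:I}, leaving the perturbation pieces with $A$ and $D$ to control.

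For the $A$-piece, I would expand $\nabla(u^2 \Psi_N^2) = 2u \Psi_N^2 \nabla u + 2 u^2 \Psi_N \nabla \Psi_N$ and apply Cauchy--Schwarz--Young, keeping $\mathcal{G}_0$ as a weight, to produce a term $\varepsilon \iint |\nabla u|^2 \mathcal{G}_0 \Psi_N^2\, dX$ (to be absorbed into the left-hand side), a main bulk term $C_\varepsilon \iint |A|^2 |\nabla \mathcal{G}_0|^2 \Psi_N^2/\mathcal{G}_0\, dX$, plus a cutoff contribution living on $\mathcal{W}_N^\Sigma$. On each Whitney cube $I \in \mathcal{W}_N$, Caccioppoli together with Harnack for $\mathcal{G}_0$ gives $\iint_{I^{**}} |\nabla \mathcal{G}_0|^2\, dX \lesssim \ell(I)^{n-1} \mathcal{G}_0(X_I)^2$, which combined with $\mathcal{G}_0(X_I) \approx \ell(I)$ from \eqref{G-delta} and the pointwise bound $\sup_{I^{**}} |A|^2 \le a(X_I)^2$ (valid for small $\lambda$ since $I^{**} \subset B(X_I, \delta(X_I)/2)$) yields the per-cube estimate $a(X_I)^2 \ell(I)^n$. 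Summing and comparing to $\iint a(X)^2/\delta(X)\, dX$ over a neighborhood of the sawtooth via the finite overlap of $\{I^{**}\}$, the Carleson hypothesis \eqref{eq:defi-vertiii:gral} controls this by $\sigma(Q_0)$. The $\nabla \Psi_N$ contribution is handled by Cauchy--Schwarz together with \eqref{eq:defi-vertiii:gral} and the bound $\sum_{I \in \mathcal{W}_N^\Sigma} \ell(I)^n \lesssim \sigma(Q_0)$ from Lemma \ref{adaptedcutoff}(c).

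The $D$-piece is the main obstacle. Invoking Lemma \ref{int-parts} with $u \leftarrow \mathcal{G}_0$ and $v \leftarrow u^2 \Psi_N^2$ (compactly supported in $\Omega \setminus \{X_0\}$), and using antisymmetry of $D$, I would rewrite $\iint_\Omega D \nabla(u^2 \Psi_N^2) \cdot \nabla \mathcal{G}_0\, dX = \iint_\Omega \div_C D \cdot \nabla \mathcal{G}_0\, u^2 \Psi_N^2\, dX$. A naive Cauchy--Schwarz here would force control of $\iint |\nabla \mathcal{G}_0|^2/\delta\, dX$ over the sawtooth, whose bound grows linearly in $N$ and therefore destroys uniform control. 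The crucial further observation is that antisymmetry of $D$ also forces $\div(\div_C D) = 0$ distributionally (by symmetric cancellation in $\sum_{i,j}\partial_i \partial_j D_{ij}$), so that a second integration by parts, legitimate because $\mathcal{G}_0 u^2 \Psi_N^2$ is compactly supported in $\Omega \setminus \{X_0\}$, gives $\iint_\Omega \div_C D \cdot \nabla \mathcal{G}_0\, u^2 \Psi_N^2\, dX = -\iint_\Omega \mathcal{G}_0 \div_C D \cdot \nabla(u^2 \Psi_N^2)\, dX$. Expanding $\nabla(u^2 \Psi_N^2)$ as before and applying Cauchy--Schwarz--Young, the bulk term $\iint \mathcal{G}_0 \Psi_N^2 |\div_C D|^2\, dX$ is bounded via $\mathcal{G}_0 \approx \delta$ on the support of $\Psi_N$ and the Carleson hypothesis \eqref{carleson-B0-gral}, while the $\nabla \Psi_N$ piece is controlled by Cauchy--Schwarz combined with \eqref{carleson-B0-gral} and $\sum_{I \in \mathcal{W}_N^\Sigma} \ell(I)^n \lesssim \sigma(Q_0)$. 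Choosing $\varepsilon$ small enough to absorb all error terms of the form $\varepsilon \iint |\nabla u|^2 \mathcal{G}_0 \Psi_N^2\, dX$ back into the left-hand side then yields \eqref{conc-pdelemma}, with constants depending only on the allowable parameters and on $C_A, C_D$.
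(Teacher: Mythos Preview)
Your argument is correct, and the overall architecture matches the paper's: same cutoff $\Psi_N$, same vanishing of the $L_1$-term and the $L_0^\top$-term, same Whitney-cube bookkeeping for the pieces supported on $\mathcal{W}_N^\Sigma$, and the same Caccioppoli/Harnack/$\mathcal{G}_0\approx\delta$ mechanism for the $A$-bulk term. The one genuine difference is in how the $D$-piece is handled.

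In the paper the Leibniz identity is organized so that the perturbation term appears as $\iint_\Omega D\,\nabla(u^2)\cdot\nabla(\mathcal{G}_0\Psi_N^2)\,dX$ (i.e., the cutoff is grouped with $\mathcal{G}_0$, not with $u^2$). This allows a \emph{single} application of Lemma~\ref{int-parts} with $u^2\in W^{1,2}_{\rm loc}$ and $v=\mathcal{G}_0\Psi_N^2$ compactly supported, landing directly on $\iint \div_C D\cdot\nabla(u^2)\,\mathcal{G}_0\Psi_N^2\,dX$, which is immediately controlled by Cauchy--Schwarz against $(\iint|\nabla u|^2\mathcal{G}_0\Psi_N^2)^{1/2}(\iint|\div_C D|^2\mathcal{G}_0\Psi_N^2)^{1/2}$ and then \eqref{carleson-B0-gral}. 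Your grouping instead produces $\iint D\,\nabla(u^2\Psi_N^2)\cdot\nabla\mathcal{G}_0$, so the first application of Lemma~\ref{int-parts} puts $\div_C D$ against $\nabla\mathcal{G}_0$, forcing you to invoke the additional structural fact $\div(\div_C D)=0$ (in the distributional sense, valid since $D\in\mathrm{Lip}_{\rm loc}$ is antisymmetric) to integrate by parts once more and move the gradient back onto $u^2\Psi_N^2$. This second step is legitimate---for $w=\mathcal{G}_0 u^2\Psi_N^2\in W^{1,2}$ with compact support in $\Omega\setminus\{X_0\}$ one approximates by smooth $w_j$ and uses $\sum_{i,j}\iint d_{i,j}\partial_i\partial_j w_j=0$---and the resulting expression coincides with the paper's up to harmless $\nabla\Psi_N$-boundary terms. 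So your route works, but the paper's grouping avoids the detour through $\nabla\mathcal{G}_0$ entirely and never needs $\div(\div_C D)=0$.
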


The proof of Theorem \ref{theor:perturbation-general} follows from Proposition \ref{pdelemma:II} as the proof in section \ref{section:Proof-CME:b->a} follows from Proposition \ref{pdelemma:I}.
\end{proof}

\begin{proof}[Proof of Proposition \ref{pdelemma:II}]
Take $\Psi_N$ from Lemma \ref{adaptedcutoff} and write $\mathcal{E}(X):=A_1(X)-A_0(X)$. Then Leibniz's rule leads us to
\begin{multline}\label{leibniz}
A_1\nabla u\cdot\nabla u\,\mathcal{G}_0\,\Psi_N^2
=
A_1\nabla u\cdot\nabla(u\,\mathcal{G}_0\,\Psi_N^2)
-\tfrac{1}{2}A_0\nabla(u^2\,\Psi_N^2)\cdot\nabla\,\mathcal{G}_0
\\
+\tfrac{1}{2}A_0\nabla(\Psi_N^2)\cdot\nabla\mathcal{G}_0\,u^2-\tfrac{1}{2}A_0\nabla (u^2)\cdot\nabla(\Psi_N^2)\,\mathcal{G}_0
-\tfrac{1}{2}\mathcal{E}\,\nabla (u^2)\cdot\nabla(\mathcal{G}_0\,\Psi_N^2).
\end{multline}

Note that $u\,\mathcal{G}_0\,\Psi_N^2\in W^{1,2}_0(\Omega^{**}_{\mathcal{F}_N,Q_0})$ since $\overline{\Omega^{**}_{\mathcal{F}_N,Q_0}}$ is a compact subset of $\Omega$ (indeed by construction $\dist(\overline{\Omega^{**}_{\mathcal{F}_N,Q_0}},\partial\Omega)\gtrsim 2^{-N}\ell(Q_0)$), $u\in W^{1,2}_{\rm loc}(\Omega)\cap L^\infty(\Omega)$, $\mathcal{G}_0\in W^{1,2}_{\rm loc}(\Omega\setminus \{X_0\})$, $\overline{\Omega^{**}_{\mathcal{F}_N,Q_0}}\subset\overline{T_{Q_0}^{**}}\subset \frac12 B_{Q_0}^*$ (cf. \eqref{definicionkappa12}), and \eqref{eq:X0-TQ}. Moreover, since $u\in W^{1,2}_{\rm loc}(\Omega)$ it follows that $u\in  W^{1,2}(\Omega^{**}_{\mathcal{F}_N,Q_0})$.  
Thus since $L_1 u=0$ in the weak sense in $\Omega$ we have
\begin{equation}\label{vanish}
\iint_{\Omega}A_1\nabla u\cdot\nabla(u\,\mathcal{G}_0\Psi_N^2)\,dX
=
\iint_{\Omega^{**}_{\mathcal{F}_N,Q_0}}A_1\nabla u\cdot\nabla(u\,\mathcal{G}_0\Psi_N^2)\,dX
=
0.
\end{equation}

On the other hand, much as before $u^2\,\Psi_N^2\in W^{1,2}_0(\Omega^{**}_{\mathcal{F}_N,Q_0})$. Also, Lemma \ref{lemagreen} (see in particular \eqref{greenpole}) gives at once that $\mathcal{G}_0\in W^{1,2}(\Omega^{**}_{\mathcal{F}_N,Q_0})$ and $L_0^\top\mathcal{G}_0=0$ in the weak sense in $\Omega\setminus\{X_0\}$. Thus, we easily obtain
\begin{equation}\label{vanish2}
\iint_{\Omega}A_0\nabla(u^2\,\Psi_N^2)\cdot\nabla\mathcal{G}_0\,dX
=
\iint_{\Omega^{**}_{\mathcal{F}_N,Q_0}} A_0^\top \nabla\mathcal{G}_0\cdot\nabla(u^2\,\Psi_N^2)\,dX
=
0.
\end{equation}

Using ellipticity, \eqref{leibniz}, \eqref{vanish}, \eqref{vanish2}, the fact that $\|u\|_{L^{\infty}(\Omega)}=1$, and Lemma \ref{adaptedcutoff}, we have
\begin{multline}\label{firstbound}
\iint_{\Omega}|\nabla u|^2\,\mathcal{G}_0\,\Psi_N^2\,dX
\lesssim
\iint _{\Omega}A_1\nabla u\cdot\nabla u\,\mathcal{G}_0\,\Psi_N^2\,dX
\\
\lesssim
\iint_{\Omega}\Big(|\nabla\mathcal{G}_0|+|\nabla u|\,\mathcal{G}_0\Big)\,|\nabla\Psi_N|\,\Psi_N\,dX
+
\bigg|\iint_{\Omega}\mathcal{E}\nabla (u^2)\cdot\nabla(\mathcal{G}_0\,\Psi_N^2)\,dX\bigg|
=:
\mathrm{I}+\mathrm{II}.
\end{multline}

Much as in \eqref{acotI:1} and  \eqref{acot-I} we can show that $
\mathrm{I}\lesssim\sigma(Q_0)$.
To estimate $\mathrm{II}$ note that since $\mathcal{E}=A_1-A_0=-(A+D)$ it follows that
\begin{multline}\label{acot-II}
\mathrm{II}
\le
\bigg|\iint_{\Omega} A\nabla (u^2)\cdot\nabla(\mathcal{G}_0\,\Psi_N^2)\,dX\bigg|
+
\bigg|\iint_{\Omega} D\nabla (u^2)\cdot\nabla(\mathcal{G}_0\,\Psi_N^2)\,dX\bigg|
=
\mathrm{II}_1+\mathrm{II}_2.
\end{multline}
For the term $\mathrm{II}_1$ we use that $A\in L^\infty(\Omega)$ and the fact that $\|u\|_{L^{\infty}(\Omega)}=1$ to obtain
\begin{equation}\label{acot-II1}
\mathrm{II}_1\lesssim\iint_{\Omega}|A|\,|\nabla u|\,|\nabla\mathcal{G}_0|\,\Psi_N^2\,dX+\iint_{\Omega}|\nabla (u^2)|\,|\nabla(\Psi_N^2)|\,\mathcal{G}_0\,dX=:\mathrm{III}_1+\mathrm{III}_2.
\end{equation}
For $\mathrm{III}_1$ we note that $\sup_{I^{**}}|A|\leq\inf_{I^*}a$ for every $I\in\mathcal{W}$, since $I^{**}\subset\{Y\in\Omega:\,|Y-X|<\delta(X)/2\}$ for every $X\in I^{*}$ (see \eqref{constwhitney}). Hence,
Lemma \ref{adaptedcutoff}, Caccioppoli's and Harnack's inequalities, \eqref{G-delta},  the fact that the family $\{I^{**}\}_{I\in\mathcal{W}}$ has bounded overlap,
and \eqref{definicionkappa12} yield
\begin{align}\label{acot-III1}
\mathrm{III}_1
&\lesssim\,
\sum_{I\in\mathcal{W}_N}\sup_{I^{**}}|A|\bigg(\iint_{I^{**}}|\nabla u|^2\,\Psi_N^2\,dX\bigg)^{\frac12}\bigg(\iint_{I^{**}}|\nabla \mathcal{G}_0|^2\,dX\bigg)^{\frac12}
\\
&\lesssim\,
\sum_{I\in\mathcal{W}_N}\bigg(\iint_{I^{**}}|\nabla u|^2\,\Psi_N^2\,dX\bigg)^{\frac12}\Big(\sup_{I^{**}}|A|^2\,\mathcal{G}_0(X_I)^2\,\ell(I)^{n-1}\Big)^{\frac12}\nonumber
\\
&\lesssim\,
\sum_{I\in\mathcal{W}_N}\bigg(\iint_{I^{**}}|\nabla u|^2\,\mathcal{G}_0\,\Psi_N^2\,dX\bigg)^{\frac12}\bigg(\iint_{I^{*}}\frac{a(X)^2}{\delta(X)}\,dX\bigg)^{\frac12}\nonumber
\\
&\lesssim\,
\bigg(\iint_{\Omega}|\nabla u|^2\,\mathcal{G}_0\,\Psi_N^2\,dX\bigg)^{\frac12}\bigg(\iint_{B_{Q_0}^*}\frac{a(X)^2}{\delta(X)}\,dX\bigg)^{\frac12} \nonumber
\\
&\lesssim\,C_A^{\frac12}\,
\bigg(\iint_{\Omega}|\nabla u|^2\,\mathcal{G}_0\,\Psi_N^2\,dX\bigg)^{\frac12}\sigma(Q_0)^{\frac12},\nonumber
\end{align}
where in the last estimate we have used \eqref{eq:defi-vertiii:gral} and AR along with the fact that $r(B_{Q_0}^*)=2\kappa_0r_{Q_0}\le 2\kappa_0\ell(Q_0)\le 2\kappa_0\diam(\partial\Omega)/M_0<\diam(\partial\Omega)$ by our choice of $M_0$.
On the other hand, we observe that
\begin{align}\label{acot-III2}
\mathrm{III}_2
&\lesssim
\iint_{\Omega}|\nabla u|\,|\nabla\Psi_N|\,\mathcal{G}_0\,\Psi_N\,dX
\\
&\lesssim
\bigg(\iint_{\Omega}|\nabla u|^2\,\mathcal{G}_0\,\Psi_N^2\,dX\bigg)^{\frac12}\bigg(\iint_{\Omega}|\nabla \Psi_N|^2\,\mathcal{G}_0\,dX\bigg)^{\frac12}
\nonumber
\\
&\lesssim
\bigg(\iint_{\Omega}|\nabla u|^2\,\mathcal{G}_0\,\Psi_N^2\,dX\bigg)^{\frac12}\bigg(\sum_{I\in\mathcal{W}_N^\Sigma}\ell(I)^{n-1}\mathcal{G}_0(X_I)\bigg)^{\frac12}
\nonumber
\\
&\lesssim
\bigg(\iint_{\Omega}|\nabla u|^2\,\mathcal{G}_0\,\Psi_N^2\,dX\bigg)^{\frac12}\sigma(Q_0)^{\frac12},
\nonumber
\end{align}
where we have used Lemma \ref{adaptedcutoff}, Harnack's inequality, the normalization $\|u\|_{L^{\infty}(\Omega)}=1$ and the last estimate follows as in \eqref{acot-I}.

Let us now turn our attention to estimating $\mathrm{II}_2$. Note that $u^2\in W^{1,2}_{\rm loc}(\Omega)$ since $u\in W^{1,2}_{\rm loc}(\Omega)\cap L^\infty(\Omega)$;
$\supp(\mathcal{G}_0\,\Psi_N^2)\subset \overline{\Omega^{*}_{\mathcal{F}_N,Q_0}}$ which is a compact subset of $\Omega$ since by construction $\dist(\overline{\Omega^{*}_{\mathcal{F}_N,Q_0}},\partial\Omega)\gtrsim 2^{-N}\ell(Q_0)$; and  finally
$\mathcal{G}_0\,\Psi_N^2\in W^{1,2}(\Omega)$ since $\mathcal{G}_0\in W^{1,2}_{\rm loc}(\Omega\setminus \{X_0\})$, $\overline{\Omega^{*}_{\mathcal{F}_N,Q_0}}\subset\overline{T_{Q_0}^{*}}\subset \frac12 B_{Q_0}^*$ (cf. \eqref{definicionkappa12}), and \eqref{eq:X0-TQ}. Thus we can invoke Lemma \ref{int-parts} to see that
\begin{align}\label{acot-II2}
\mathrm{II}_2
&
=
\bigg|\iint_{\Omega} \div_C D \cdot \nabla (u^2)\,\mathcal{G}_0\,\Psi_N^2\,dX\bigg|
\\ \nonumber
&\lesssim
\bigg(\iint_{\Omega}|\nabla u|^2\,\mathcal{G}_0\,\Psi_N^2\,dX\Bigg)^{\frac12}\bigg(\iint_{\Omega}|\div_C D|^2\,\mathcal{G}_0\,\Psi_N^2\,dX\bigg)^{\frac12}.
\\ \nonumber
&\lesssim\,C_D\,
\bigg(\iint_{\Omega}|\nabla u|^2\,\mathcal{G}_0\,\Psi_N^2\,dX\bigg)^{\frac12}\sigma(Q_0)^{\frac12},
\end{align}
where we have used $\|u\|_{L^{\infty}(\Omega)}=1$ and the last estimate is obtained as follows:
\begin{multline*}
\iint_{\Omega}|\div_C D |^2\,\mathcal{G}_0\,\Psi_N^2\,dX
\lesssim
\sum_{I\in\mathcal{W}_N}\mathcal{G}_0(X_I)\iint_{I^{**}}|\div_C D|^2\,dX
\\
\lesssim
\sum_{I\in\mathcal{W}_N}\ell(I)\iint_{I^{**}}|\div_C D|^2\,dX
\lesssim
\iint_{B_{Q_0}^*\cap\Omega}|\div_C D (X)|^2\delta(X)\,dX\
\lesssim C_D\,\sigma(Q_0),
\end{multline*}
where we have used Harnack's inequality, \eqref{G-delta}, the fact that the family $\{I^{**}\}_{I\in\mathcal{W}}$ has bounded overlap, \eqref{definicionkappa12}, and the last estimate follows from \eqref{carleson-B0-gral}, the fact that $r(B_{Q_0}^*)=2\kappa_0r_{Q_0}\le 2\kappa_0\ell(Q_0)\le 2\kappa_0\diam(\partial\Omega)/M_0<\diam(\partial\Omega)$ by our choice of $M_0$, and the Ahlfors regularity of $\partial\Omega$.

At this point we can collect \eqref{firstbound}--\eqref{acot-II2} and use Young's inequality to conclude that
\begin{multline*}
\iint_{\Omega}|\nabla u|^2\,\mathcal{G}_0\,\Psi_N^2\,dX
\le
C\sigma(Q_0)
+
C\bigg(\iint_{\Omega}|\nabla u|^2\,\mathcal{G}_0\,\Psi_N^2\,dX\bigg)^{\frac12}\sigma(Q_0)^{\frac12}
\\
\le
\frac{C(2+C)}2\sigma(Q_0)
+
\frac12 \iint_{\Omega}|\nabla u|^2\,\mathcal{G}_0\,\Psi_N^2\,dX.
\end{multline*}
The last term is finite since $\supp(\Psi_N)\subset \overline{\Omega^{*}_{\mathcal{F}_N,Q_0}}$ which is a compact subset of $\Omega$,
$u\in W^{1,2}_{\rm loc}(\Omega)$, $\mathcal{G}_0\in L^\infty_{\rm loc}(\Omega\setminus\{X_0\})$, \eqref{eq:X0-TQ}, and \eqref{definicionkappa12}. Hence we can hide it and use Lemma \ref{adaptedcutoff} to conclude as desired that
\[
\iint_{\Omega_{\mathcal{F}_N,Q_0}}|\nabla u|^2\,\mathcal{G}_0\,dX
\lesssim
\iint_{\Omega}|\nabla u|^2\,\mathcal{G}_0\,\Psi_N^2\,dX
\lesssim
\sigma(Q_0).
\]
This completes the proof, see \eqref{conc-pdelemma}.
\end{proof}


\newcommand{\etalchar}[1]{$^{#1}$}

\end{document}